\documentclass[onefignum,onetabnum]{siamart190516}



\usepackage{lipsum}
\usepackage{amsfonts}
\usepackage{graphicx}
\usepackage{epstopdf}
\usepackage{xcolor}
\usepackage{tikz}
\usepackage{algorithmic}
\usepackage{subfloat}
\usepackage{subfig}

\ifpdf
  \DeclareGraphicsExtensions{.eps,.pdf,.png,.jpg}
\else
  \DeclareGraphicsExtensions{.eps}
\fi

\usepackage{hyperref}
\usepackage{cleveref}
\crefname{equation}{}{}
\crefname{figure}{Fig.}{Figs.}
\crefname{appendix}{}{}
\crefname{table}{Tab.}{Tabs.}
\Crefname{ALC@unique}{Line}{Lines} 


\newcommand{\R}{\mathbb{R}}

\newcommand{\uu}{{\bf u}}
\newcommand{\vv}{{\bf v}}

\def\ol{\overline}
\def\bxi{{\boldsymbol \xi}}
\def\etab{\boldsymbol{\eta}}

\newcommand{\Po}{\mathcal{P}}
\newcommand{\B}{\mathcal{B}}
\newcommand{\T}{\mathcal{T}}
\newcommand{\p}{\mathrm{p}}
\newcommand{\s}{\mathrm{s}}
\newcommand{\ww}{\mathcal{W}}
\newcommand{\n}{{\bf n}}

\newcommand{\gray}[1]{\textcolor{gray!75}{#1}}

\DeclareMathAlphabet\mathbfcal{OMS}{cmsy}{b}{n}
\newsiamremark{remark}{Remark}
\newsiamremark{hypothesis}{Hypothesis}
\crefname{hypothesis}{Hypothesis}{Hypotheses}
\newsiamthm{claim}{Claim}

\headers{Invariant domain preserving approximations of hyperbolic systems}{V. Carlier and F.Renac}

\title{Invariant domain preserving high-order spectral discontinuous approximations of hyperbolic systems}

\author{Valentin Carlier\thanks{\'{E}cole normale sup\'erieure, F-75230 Paris, France ({\tt vcarlier@clipper.ens.fr})}
\and Florent Renac\thanks{DAAA, ONERA, Universit\'e Paris Saclay F-92322 Ch\^atillon, France ({\tt florent.renac@onera.fr})}}

\usepackage{amsopn}

\def \LOCALFIGPATHRP {./FIGURES/}


\ifpdf
\hypersetup{
  pdftitle={Invariant domain preserving approximations of hyperbolic systems},
  pdfauthor={V. Carlier and F.Renac}
}
\fi



\begin{document}

\maketitle

\begin{abstract}
We propose a limiting procedure to preserve invariant domains with time explicit discrete high-order spectral discontinuous approximate solutions to hyperbolic systems of conservation laws. Provided the scheme is discretely conservative and satisfy geometric conservation laws at the discrete level, we derive a condition on the time step to guaranty that the cell-averaged approximate solution is a convex combination of states in the invariant domain. These states are then used to define local bounds which are then imposed to the full high-order approximate solution within the cell via an a posteriori scaling limiter. Numerical experiments are then presented with modal and nodal discontinuous Galerkin schemes confirm the robustness and stability enhancement of the present approach.
\end{abstract}

\begin{keywords}
  Hyperbolic systems, Convex invariant set, Limiting, Spectral discontinuous method, Discontinuous Galerkin method
\end{keywords}

\begin{AMS}
  65M12, 65M70, 35L65 
\end{AMS}

%
%
\section{Introduction}

Let $D\subset \R^d$ be an open domain with $d$ the space dimension. We are interested here in high-order numerical solutions to hyperbolic systems of conservations law
\begin{equation}
\label{eqn:HCL}
\left\{  \begin{array}{ll}
\partial_t \uu + \nabla \cdot {\bf f}(\uu)=0,& \text{ in } D \times (0, \infty),\\
\uu({\bf x},0)=\uu_0({\bf x}), & \text{ in } D,
\end{array}
\right. 
\end{equation}

\noindent where $\uu({\bf x}, t)$ represents the vector of conserved variables with values in the set of states $\Omega^a \subset \R^m$ which is assumed to be convex. The flux tensor ${\bf f}=({\bf f}_1,\dots,{\bf f}_d) : \Omega^a \ni \uu \mapsto {\bf f}(\uu) \in \R^{m \times d}$ is assumed to be smooth. 

Solutions to \cref{eqn:HCL} may develop discontinuities in finite time even if the initial data is smooth, therefore the equations are to be understood in the sense of distributions. Nevertheless, in this setting we lose uniqueness of the solution and \cref{eqn:HCL} must be supplemented with further admissibility criteria. We here focus on entropy inequalities on some twice differentiable strictly convex function $\eta : \Omega^a \rightarrow \R$ associated with a smooth entropy flux ${\bf q}: \Omega^a \rightarrow \R^{d} $ satisfying 
\begin{equation}
    \label{eqn:entropflux}
    \etab'(\uu)^\top {\bf f}_i'(\uu)={\bf q}_i'(\uu)^\top \quad \forall \uu \in \Omega^a, \quad 1\leq i\leq d.
\end{equation}

A weak solution to \cref{eqn:HCL} is called an entropy weak solution if for every entropy pair of \cref{eqn:HCL} we have
\begin{equation}
    \label{eqn:entropicsol}
    \frac{\partial \eta(\uu)}{ \partial t}+\nabla \cdot {\bf q}(\uu)\leq0,
\end{equation}

\noindent in the sense of distributions. Classical (smooth) solutions respect this condition with an equality, since one can apply the chain rule with \cref{eqn:entropflux}. The inequality for discontinuous solutions comes from a vanishing viscosity argument by adding a parabolic perturbation to \cref{eqn:HCL}, the regularizing effect allows to have a smooth unique solution in this case. Then, under some structural assumptions on \cref{eqn:HCL}, vanishing viscosity approximations converge to an entropy measure valued solution to \cref{eqn:HCL,eqn:entropicsol} \cite{Diperna1983ConvergenceOA}. This result is in particular based on the existence of convex invariant domains $\B \subset \Omega^a $ for \cref{eqn:HCL}: if $\uu$ is in $\B$, then it remains in $\B$ almost everywhere in $D\times(0,\infty)$ \cite{Chueh_elal_IDP_sys_77,Frid_idp_LF_01,Hoff_idp_85,serre_inv_reg_87}. This property generalizes the notion of maximum principle for scalar equations. Numerical methods keeping this property at the discrete level are called invariant domain preserving (IDP).

We are here interested in the approximation of \cref{eqn:HCL} using high-order discontinuous spectral methods (see, e.g., \cite{cockburn-shu89,gassner_13,fisher_carpenter_13,chan_skewDG_2019,Crean_etal_SBP_curved_2018} and references therein) where the solution to \cref{eqn:HCL} is sought under the form of discontinuous piecewise truncated series of analytic functions over a partition of the domain $D$. Such methods have been applied to a wide range of applications \cite{shu2016high,wang2013high}, and have the potential to achieve high-order accuracy efficiently on modern parallel architectures \cite{hutchinson2016efficiency,franco2020high}. Unfortunately these approximations suffer from spurious oscillations around discontinuities of the exact solution due to Gibbs phenomenon \cite{don1994numerical,gottlieb1997gibbs} that may cause the approximate solution to become locally nonphysical, leading to robustness issues. A large body of research has been proposed to address such issues with, e.g., solution and flux limiters \cite{KRIVODONOVA_etal_lim_DG_04,zhang_shu_10a,Guermond_etal_IDP_conv_lim_19}, entropy conservative subcell flux differencing \cite{fisher_carpenter_13,chan_skewDG_2019,Crean_etal_SBP_curved_2018} artificial viscosity \cite{GUERMOND_etal_EV_04,BarterDarmofal_AVDG_10}, shock-capturing terms \cite{Jaffre_etal_cv_DGFE_95,hiltebrand_mishra_14}.

We here focus on a posteriori limiters from \cite{zhang_shu_10a,zhang2010positivity} scaling the cellwise approximate solution around its cell average, thus allowing to preserve positivity of the solution (i.e., with $\B=\Omega^a$) and maximum principles in scalar problems, while preserving conservativity of the method. Under some strong assumptions on the mesh, it is indeed possible to derive a condition on the time step of the scheme so that the cell-averaged solution remains in the invariant domains on Cartesian and simplicial grids \cite{zhang_shu_10a,zhang2010positivity,Jiang_Lu_IDP_DG_18}, or on unstructured quadrangular straight-sided grids \cite{renac2020entropy}. We here extend this limiting technique to an IDP limiter for a broad class of spectral discontinuous methods with explicit time stepping on general unstructured meshes with possibly curved elements. Provided, the discretization method is conservative and satisfies geometric conservation laws \cite{thomas_lombard_GCL_79,kopriva_metric_id_06} at the discrete level, we propose a condition on the time step to guaranty that the cell-averaged approximate solution is a convex combination of states lying in the required invariant domains. These states are then used to define local bounds which are then imposed to the full high-order approximate solution within the cell via the scaling limiter. This strategy is closely related to convex limiting \cite{Guermond_etal_IDP_conv_lim_19} based on first-order IDP approximations defining local bounds and then forcing the high-order approximation to satisfy these bounds through flux limiting \cite{BORIS_Book_FCT_73,zalesak1979fully}. This approach has been applied to finite element approximations in \cite{Guermond_etal_IDP_conv_lim_18} and discontinuous Galerkin spectral element method in \cite{PAZNER_idg_DGSEM20211} among others. 

The objective of this paper is hence to derive a CFL condition on the time step and to propose an iterative algorithm for its evaluation. These are based first on the existence of a state in the invariant region which satisfies a trivial flux balance over each mesh cell boundaries. We call this state the pseudo-equilibrium state and then use tricks from \cite{perthame_shu_96} to expand the the cell-averaged approximate solution is a convex combination of states lying in the invariant domains. The CFL condition is also based on the existence of a quadrature rule to evaluate the cell-averaged solution that includes the traces of the numerical solution used to evaluate numerical fluxes in the scheme. This latter result generalized the work in \cite{zhang2012maximum} on triangular grids to general curved polyhedral elements.

The paper is organized as follows. In \cref{sec:RP} we introduce the notion of invariant domain and invariant domain preserving Riemann solver. In \cref{sec:CFL} we will state and prove our theorem on the existence of a CFL for high order schemes, present and discuss the limiting strategy. \cref{sec:schemes} will present various schemes that satisfy the hypothesis of our work and state the CFL precisely for those. Numerical experiments will be presented in \cref{sec:experiments}, and the conclusions follow in
\cref{sec:conclusions}.

%
%
\section{Approximate Riemann solvers}\label{sec:RP}

In this section we present some basic notions \cite{bouchut_04,hll_83} on Riemann problem, approximate Riemann solver (ARS), and convex invariant domain that will be used in the remainder of this work. Throughout this section, $\n$ in $\mathbb{R}^d$ is a given unit vector.

\subsection{Riemann problem and invariant domains}

Let two states $\uu_L$ and $\uu_R$ in $\Omega^a$, it is convenient for the present analysis to consider the Riemann problem in the direction $\n$: 
\begin{equation}\label{eqn:RP}
 \partial_t \uu + \partial_x{\bf f}(\uu) \cdot \n =0, \text{ in } \R \times (0, \infty), \quad \uu(x,0) = \left\{  \begin{array}{ll}
\uu_L, & \text{if } x<0, \\
\uu_R, & \text{if } x>0.
\end{array}
\right.
\end{equation}

\noindent We will integrate \cref{eqn:RP} over the space time slab $[-\frac{h}{2},\frac{h}{2}]\times[0,\Delta t]$ with $h>0$ and $\Delta t>0$ the space and time steps. We suppose here that all the Riemann problems we consider have a self-similar entropy weak solution $\ww(\frac{x}{t}; \uu_L, \uu_R, \n)$. Let introduce the self-similar variable $\xi=\frac{x}{\Delta t}$ and assume that there exist $\sigma_L$, $\sigma_R$ such that: $\ww(\xi;\uu_L,\uu_R,\n)=\uu_L$ for $\xi<\sigma_L$ and $\ww(\xi;\uu_L,\uu_R,\n)=\uu_R$ for $\xi>\sigma_R$. We then define the maximum wave speed in \cref{eqn:RP} by
\begin{equation}\label{eqn:maxwavespeed}
|\lambda|(\uu_L, \uu_R, \n)=\max{}(|\sigma_L|,|\sigma_R|),
\end{equation}

\noindent and for $\frac{\Delta t}{h}|\lambda|(\uu_L,\uu_R,\n)\leq \frac{1}{2}$, we define the average over the Riemann fan \cite{Guermond_etal_IDP_conv_lim_19,hll_83}
\begin{equation}\label{eqn:Riemavg}
    {\bar \uu}(\uu_L, \uu_R,  \n,\Delta t) :=\frac{1}{h}\int_{-\frac{h}{2}}^{\frac{h}{2}}\ww\Big(\frac{x}{\Delta t}; \uu_L, \uu_R, \n\Big) dx = \frac{\uu_L+ \uu_R}{2}-\Delta t\big({\bf f}(\uu_R) -{\bf f}(\uu_L)\big)\cdot \n.
\end{equation}

%

Finally, we will use the definition of invariant domain from \cite{Guermond_etal_IDP_conv_lim_19}: a convex set $\B \subset \Omega^a$ is an invariant domain \cref{eqn:HCL} if for all $\uu_L$ and $\uu_R$ in $\B$, we have
\begin{equation*}
{\bar \uu}\big(\uu_L, \uu_R,  \n,\tfrac{\Delta t}{h}\big) \in \B \quad \forall \; \frac{\Delta t}{h}|\lambda|(\uu_L,\uu_R,\n)\leq \frac{1}{2}.
\end{equation*}

\subsection{Two-point numerical fluxes and approximate Riemann solvers}
The discretization of \cref{eqn:HCL} will rely on two-point numerical fluxes \cite{Lax_hyp_sys_cons_laws_73, hll_83} for the approximation of ${\bf f}\cdot{\bf n}$ and we assume them to be consistent and conservative:
\begin{equation}\label{eq:conserv_consist_flux}
 {\bf h}(\uu,\uu,\n)={\bf f}(\uu)\cdot\n, \quad {\bf h}(\uu_L,\uu_R,\n)=-{\bf h}(\uu_R,\uu_L,-\n) \quad \forall \uu,\uu_L, \uu_R \in \Omega^a,
\end{equation}

\noindent and Lipschitz continuous. We also define the notion of IDP two-point flux in the following definition.

\begin{definition}\label{def:invdom2ptflx}
A two-point flux is said to be invariant domain preserving (IDP) for $\B$ an invariant domain if we have
\begin{equation*}
  \uu-\frac{\Delta t}{h} \big( {\bf h}(\uu,\uu_R, \n)- {\bf h}(\uu_L,\uu, \n) \big) \in \B \quad \forall \; \uu_L, \uu, \uu_R \in \B,
\end{equation*}

\noindent under the half CFL condition
\begin{equation*}
    \frac{\Delta t}{h} \max (|\lambda|(\uu_L,\uu,\n),|\lambda|(\uu,\uu_R,\n) ) \leq \frac{1}{2}.
\end{equation*}

\end{definition}

We now introduce the notion of ARS and IDP ARS, which will be used to derive IDP two-point fluxes.

\begin{definition}[Approximate Riemann solver]
    An ARS is a self-similar function $\ww^a(\frac{x}{t}; \uu_L, \uu_R, \n)$, used to approximate the solution $\ww(\frac{x}{t}; \uu_L, \uu_R, \n)$ of the Riemann problem \cref{eqn:RP}, that is consistent with the integral form of \cref{eqn:HCL} \cite{hll_83,godunov_59}: for any $\frac{\Delta t}{h} |\lambda|(\uu_L,\uu_R, \n) \leq \frac{1}{2}$, we have
\begin{equation}
\label{eqn:consistRiemsolver}
        \frac{1}{h}\int_{-\frac{h}{2}}^{\frac{h}{2}}\ww^a\Big(\frac{x}{\Delta t};\uu_L,\uu_R,\n\Big)dx=\frac{\uu_L+\uu_R}{2}-\Delta t\big({\bf f}(\uu_R) -{\bf f}(\uu_L) \big)\cdot \n.
    \end{equation}
\end{definition}

Using consistency in \cref{eq:conserv_consist_flux}, and setting $\xi=\frac{x}{\Delta t}$, one defines a two-point flux from an ARS as 
\begin{subequations}\label{eqn:approxRiemFlux}
\begin{align}
{\bf h}_{\ww^a}(\uu_L,\uu_R,\n)&= {\bf f}(\uu_L)\cdot \n-\int_{-\lambda}^0 \big( \ww^a(\xi; \uu_L, \uu_R, \n)-\uu_L \big) d\xi \label{eqn:approxRiemFlux-a}\\
&= {\bf f}(\uu_R) \cdot \n+\int_0^{\lambda} \big(\ww^a(\xi; \uu_L, \uu_R,\n)-\uu_R \big) d\xi, \label{eqn:approxRiemFlux-b}
\end{align}
\end{subequations}

\noindent where $\lambda=|\lambda|(\uu_L,\uu_R,\n)$. Both definitions are equivalent due to \cref{eqn:consistRiemsolver}. We can now define the notion of IDP ARS.

\begin{definition}\label{def:IDP_ARS}
An ARS $\ww^a(\xi; \uu_L, \uu_R,\n)$ is IDP for $\B$ an invariant domain if we have
\begin{equation*}
 \frac{1}{\lambda}\int_{-\lambda}^0 \ww^a(\xi; \uu_L, \uu_R,\n) d\xi \in \B, \quad 
 \frac{1}{\lambda}\int_0^{\lambda} \ww^a(\xi; \uu_L, \uu_R, \n) d\xi \in \B \quad \forall \uu_L,\uu_R\in\B.
\end{equation*}
\end{definition}

As a consequence $\frac{1}{2 \lambda}\int_{-\lambda}^{\lambda} \ww^a(\xi, \uu_L, \uu_R,\n) d\xi $ is also in $\B$. We have the following results linking IDP ARS and two-point numerical flux. 

\begin{lemma}[Interface invariant domain preservation \cite{bouchut_04}]
\label{lem:interfIRP}
The ARS $\ww^a$ is IDP for $\B$ iff. for all $\uu_L, \uu_R$ in $\B$, and $\frac{\Delta t}{h} |\lambda|(\uu_L, \uu_R, \n) \leq 1$, we have
\begin{subequations}
\begin{align}
    \uu_L &- \frac{\Delta t}{h}\big({\bf h}_{\ww^a}(\uu_L,\uu_R,\n)-{\bf f}(\uu_L) \cdot \n \big) \in \B, \label{eqn:idparv1} \\
    \uu_R &- \frac{\Delta t}{h}\big({\bf f}(\uu_R) \cdot \n-{\bf h}_{\ww^a}(\uu_L,\uu_R,\n) \big) \in \B. \label{eqn:idparv2}
\end{align}
\end{subequations}
\end{lemma}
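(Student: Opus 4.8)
The plan is to show that each of the two membership conditions \cref{eqn:idparv1,eqn:idparv2} is, after substituting the definition of the two-point flux, nothing but a convex combination of an endpoint state ($\uu_L$ or $\uu_R$) with a one-sided average of the ARS over the Riemann fan. The equivalence with \cref{def:IDP_ARS} then follows from convexity of $\B$ together with a judicious choice of the ratio $\frac{\Delta t}{h}$.

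First I would insert \cref{eqn:approxRiemFlux-a} into the bracket of \cref{eqn:idparv1}. Using $\int_{-\lambda}^0 \uu_L\,d\xi = \lambda\uu_L$, the bracket becomes ${\bf h}_{\ww^a}(\uu_L,\uu_R,\n) - {\bf f}(\uu_L)\cdot\n = \lambda\uu_L - \int_{-\lambda}^0 \ww^a(\xi;\uu_L,\uu_R,\n)\,d\xi$. Setting $\mu := \frac{\Delta t}{h}\lambda$, a direct rearrangement yields
\begin{equation*}
\uu_L - \frac{\Delta t}{h}\big({\bf h}_{\ww^a}(\uu_L,\uu_R,\n) - {\bf f}(\uu_L)\cdot\n\big) = (1-\mu)\,\uu_L + \mu\,\frac{1}{\lambda}\int_{-\lambda}^0 \ww^a(\xi;\uu_L,\uu_R,\n)\,d\xi.
\end{equation*}
An identical computation starting from \cref{eqn:approxRiemFlux-b}, together with $\int_0^\lambda \uu_R\,d\xi = \lambda\uu_R$, recasts the left-hand side of \cref{eqn:idparv2} as $(1-\mu)\,\uu_R + \mu\,\frac{1}{\lambda}\int_0^\lambda \ww^a(\xi;\uu_L,\uu_R,\n)\,d\xi$.

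With these two identities in hand the equivalence is immediate. For the forward implication, if $\ww^a$ is IDP then both one-sided averages lie in $\B$ by \cref{def:IDP_ARS}; since the CFL bound $\frac{\Delta t}{h}|\lambda|\leq 1$ is precisely $\mu\in[0,1]$ and the endpoints $\uu_L,\uu_R$ are in $\B$, convexity of $\B$ places each convex combination in $\B$, giving \cref{eqn:idparv1,eqn:idparv2}. For the converse I would exploit the freedom in $\frac{\Delta t}{h}$: choosing it so that $\mu=1$ (admissible since the CFL allows equality) collapses each convex combination onto the one-sided average alone, so \cref{eqn:idparv1,eqn:idparv2} force $\frac{1}{\lambda}\int_{-\lambda}^0\ww^a\,d\xi\in\B$ and $\frac{1}{\lambda}\int_0^\lambda\ww^a\,d\xi\in\B$, which is exactly \cref{def:IDP_ARS}.

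I do not anticipate a deep obstacle; the substance is the bookkeeping in the first step and the recognition that each bracket is an affine function of $\mu$ interpolating between the endpoint state at $\mu=0$ and the corresponding one-sided Riemann average at $\mu=1$. The two points deserving care are the degenerate case $\lambda=0$, where $\uu_L=\uu_R$ and the statement is trivial, and the fact that the converse genuinely requires the \emph{full} CFL $\frac{\Delta t}{h}|\lambda|\leq 1$ so that $\mu=1$ is attainable, in contrast to the half-CFL appearing in \cref{def:invdom2ptflx} where the contributions of both fans are superposed.
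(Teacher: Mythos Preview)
Your proposal is correct and follows essentially the same route as the paper: substitute \cref{eqn:approxRiemFlux-a} (resp.\ \cref{eqn:approxRiemFlux-b}) to rewrite the left-hand side of \cref{eqn:idparv1} (resp.\ \cref{eqn:idparv2}) as the convex combination $(1-\mu)\uu_L+\mu\,\tfrac{1}{\lambda}\int_{-\lambda}^0\ww^a\,d\xi$ with $\mu=\tfrac{\Delta t}{h}\lambda\in[0,1]$, then invoke convexity of $\B$ for the forward direction and set $\mu=1$ for the converse. Your added remarks on the degenerate case $\lambda=0$ and on why the full CFL (not the half CFL) is needed for the converse are accurate and go slightly beyond what the paper spells out.
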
 

\begin{proof}
Let consider \cref{eqn:idparv1}, a similar argument holds for \cref{eqn:idparv2}. From \cref{eqn:approxRiemFlux-a}, we have 
\begin{equation*}
\uu_L-\frac{\Delta t}{h}\big({\bf h}_{\ww^a}(\uu_L,\uu_R,\n)-{\bf f}(\uu_L) \cdot \n \big)= \big(1-\frac{\Delta t}{h} \lambda\big) \uu_L +  \frac{\Delta t}{h} \lambda \frac{1}{\lambda}\int_{-\lambda}^0 \big( \ww^a(\xi, \uu_L, \uu_R,\n) d\xi,
\end{equation*}

\noindent and since $0<\frac{\Delta t}{h} \lambda \leq 1$ this is a convex combination of states in $\B$ and therefore is in $\B$. Conversely, taking $\frac{\Delta t}{h}\lambda=1$ the above integrals are $\B$ iff. \cref{eqn:idparv1} and \cref{eqn:idparv2} hold and we conclude from \cref{def:IDP_ARS}.
\end{proof}

This allows us to state the following result.

\begin{lemma}
\label{lem:stabriemannflux}
Let ${\bf h}_{\ww^a}$ and ${\bf h}_{{\mathcal{W}}^b}$ be two-point fluxes from two different ARS that are IDP for $\B$. Then, we have
\begin{equation}
\label{eqn:stab2flux}
    \uu-\frac{\Delta t}{h} \big( {\bf h_{\ww^a}}(\uu,\uu_R, \n)- {\bf h_{{\mathcal{W}}^b}}(\uu_L,\uu, \n) \big) \in \B \quad \forall \uu_L, \uu, \uu_R \in \B,
\end{equation}

\noindent under the half CFL condition
\begin{equation*}
    \frac{\Delta t}{h} \max\big(|\lambda|(\uu_L,\uu,\n),|\lambda|(\uu,\uu_R,\n) \big) \leq \frac{1}{2}.
\end{equation*}
\end{lemma}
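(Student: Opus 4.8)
The plan is to exhibit the target state in \cref{eqn:stab2flux} as the midpoint of two states that each belong to $\B$ by \cref{lem:interfIRP}, and to conclude by convexity of $\B$. The essential observation is a mismatch in the CFL constants: \cref{lem:interfIRP} is valid under the \emph{full} CFL number $\tfrac{\Delta t}{h}|\lambda|\leq 1$, whereas the statement to be proved only assumes the \emph{half} CFL $\tfrac{\Delta t}{h}|\lambda|\leq\tfrac12$. I would bridge this gap by applying \cref{lem:interfIRP} with the doubled time step $2\Delta t$, for which the half CFL hypothesis becomes exactly the full CFL.

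Concretely, I would first apply \cref{eqn:idparv1} to the flux ${\bf h_{\ww^a}}$ with left state $\uu$, right state $\uu_R$, and time step $2\Delta t$, which gives
\[
\uu - \frac{2\Delta t}{h}\big({\bf h_{\ww^a}}(\uu,\uu_R,\n)-{\bf f}(\uu)\cdot\n\big) \in \B,
\]
this being legitimate since $\tfrac{2\Delta t}{h}|\lambda|(\uu,\uu_R,\n)\leq 1$ by the half CFL assumption. Next I would apply \cref{eqn:idparv2} to ${\bf h_{{\mathcal{W}}^b}}$ with left state $\uu_L$, right state $\uu$, and the same time step $2\Delta t$, yielding
\[
\uu - \frac{2\Delta t}{h}\big({\bf f}(\uu)\cdot\n-{\bf h_{{\mathcal{W}}^b}}(\uu_L,\uu,\n)\big) \in \B,
\]
which is again admissible because $\tfrac{2\Delta t}{h}|\lambda|(\uu_L,\uu,\n)\leq 1$.

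Then I would form the arithmetic mean of these two states. The two occurrences of $\tfrac{2\Delta t}{h}\,{\bf f}(\uu)\cdot\n$ appear with opposite signs and cancel exactly, while the factor $2$ in the time step is halved by the averaging. Hence the mean equals precisely
\[
\uu - \frac{\Delta t}{h}\big({\bf h_{\ww^a}}(\uu,\uu_R,\n)-{\bf h_{{\mathcal{W}}^b}}(\uu_L,\uu,\n)\big),
\]
namely the left-hand side of \cref{eqn:stab2flux}. Since $\B$ is convex and this is a convex combination, with equal weights $\tfrac12$, of two elements of $\B$, it lies in $\B$, which concludes the argument.

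I do not anticipate a serious obstacle here. The only point requiring care is the bookkeeping of the CFL constants, i.e.\ verifying that doubling the time step turns each half CFL condition into exactly the full CFL under which \cref{lem:interfIRP} applies at the corresponding interface; the cancellation of the physical flux ${\bf f}(\uu)\cdot\n$ and the final convexity step are then immediate.
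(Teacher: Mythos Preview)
Your proposal is correct and is essentially identical to the paper's proof: both split the target state as the average of $\uu-\tfrac{2\Delta t}{h}\big({\bf h_{\ww^a}}(\uu,\uu_R,\n)-{\bf f}(\uu)\cdot\n\big)$ and $\uu-\tfrac{2\Delta t}{h}\big({\bf f}(\uu)\cdot\n-{\bf h_{{\mathcal{W}}^b}}(\uu_L,\uu,\n)\big)$, invoke \cref{lem:interfIRP} on each with the doubled time step (so the half CFL becomes the full CFL), and conclude by convexity of $\B$.
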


\begin{proof}
We rewrite \cref{eqn:stab2flux} as
\begin{align*}
    \uu-\frac{\Delta t}{h} \big( {\bf h_{\ww^a}}(\uu,\uu_R, \n)- {\bf h_{{\mathcal{W}}^b}}(\uu_L,\uu, \n) \big) &= \frac{1}{2} \Big( \uu-2\frac{\Delta t}{h} \big( {\bf h_{\ww^a}}(\uu,\uu_R, \n)- {\bf f}(\uu) \cdot \n \big) \Big)\\
    &+ \frac{1}{2} \Big( \uu-2\frac{\Delta t}{h} \big( {\bf f}(\uu) \cdot \n - {\bf h_{{\mathcal{W}}^b}}(\uu,\uu_R, \n) \big) \Big),
\end{align*}
and apply \cref{lem:interfIRP} to both terms of the right-hand side with $2\frac{\Delta t}{h} \lambda \leq 1$.
\end{proof}

The previous lemma with ${\mathcal{W}}^b=\ww^a$ also proves that the three-point scheme built from an IDP ARS is also IDP \cite{bouchut_04}.

%
%
\section{Invariant domain preserving limiter}\label{sec:CFL}

We here state and prove our main results on the existence of an explicit condition on the time step to ensure that the cell-averaged solution from a high-order spectral discontinuous scheme is IDP.  In \cref{sec:cell-av-scheme} we clarify the schemes we are considering in this work. Our results are based on the existence of a pseudo-equilibrium state allowing a balance of the numerical fluxes at faces of each element which is introduced in \cref{sec:pseudo_eq_state} where we prove its existence. The main result givin the condition on the time step is given in \cref{sec:Dt_IDP}. A limiting strategy based on convex bounds is described in \cref{sec:convex_limiter}, while \cref{sec:evaluation_ustar} introduces a fast algorithm to evaluate the time step.

\subsection{Cell-averaged fully discrete scheme}\label{sec:cell-av-scheme}

We now describe the main properties of the numerical methods we are considering in this work. We consider here discretely conservative high-order approximations of \cref{eqn:HCL}. Without loss of generality, we use an explicit forward Euler discretization in time. High-order time integration is then performed using strong-stability preserving Runge-Kutta methods \cite{gottlieb2001strong} that are convex combinations of explicit first-order schemes in time and thus keep their stability properties. For the spatial discretization, the approximate solution $\uu_h({\bf x},t)$ is defined locally over each element $\kappa$ of the partition $\T_h$ of the domain $D$ in a local function cell space ${\cal V}_h^p(\kappa)$. By $\uu_h^{(n+1)}(\cdot)=\uu_h(\cdot,t^{(n+1)})$ we denote the solution at time $t^{(n+1)}=t^{(n)}+\Delta t^{(n)}$ with $t^{(0)}=0$ and $\Delta t^{(n)}>0$ the time step. The approximate solution is assumed to satisfy the following relation for the cell-averaged solution $\langle\uu_h\rangle_\kappa$:
\begin{equation}
    \label{eqn:meaneqngen}
    \langle\uu_h^{(n+1)}\rangle_\kappa =\langle\uu_h^{(n)}\rangle_\kappa - \Delta t^{(n)}\sum_{k=1}^{N_f}s_k^\kappa{\bf h}\big(\uu_h^-({\bf x}_k^\kappa,t^{(n)}),\uu_h^+({\bf x}_k^\kappa,t^{(n)}),\n_k^\kappa \big) \quad \forall \kappa\in\T_h,
\end{equation}

\noindent where the ${\bf x}_k^\kappa$ are some points on the faces $f$ in $\partial\kappa$ and $\uu_h^\pm({\bf x}_k^\kappa,t^{(n)})= \lim_{\epsilon \rightarrow 0^+}\uu_h({\bf x}_k^\kappa \pm \epsilon \n_k^\kappa,t^{(n)})$ denote evaluations of the traces of the solutions at ${\bf x}_k^\kappa$ (see \cref{fig:stencil_2D}). The $s_k^\kappa>0$ are local contributions to $\frac{|f|}{|\kappa|}$ with $|f|$ and $|\kappa|$ approximations of the face surface and element volume, and we introduce
\begin{equation}\label{eqn:defbordkappa}
    {\cal S}^\kappa := \sum\limits_{k=1}^{N_f}s_k^\kappa.
\end{equation}

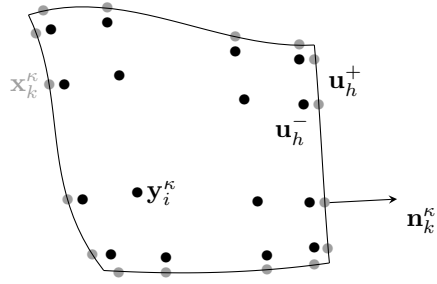
\begin{figure}[ht]
\begin{center}
\begin{tikzpicture}
[declare function={la(\x) = (\x+1/sqrt(5))/(-1+1/sqrt(5)) * (\x-1/sqrt(5))/(-1-1/sqrt(5)) * (\x-1)/(-2);
                   lb(\x) = (\x+1)/(-1/sqrt(5)+1) * (\x-1/sqrt(5))/(-2/sqrt(5)) * (\x-1)/(-1/sqrt(5)-1);
                   lc(\x) = (\x+1)/(1/sqrt(5)+1) * (\x+1/sqrt(5))/(2/sqrt(5)) * (\x-1)/(1/sqrt(5)-1);
                   ld(\x) = (\x+1)/(2) * (\x+1/sqrt(5))/(1+1/sqrt(5)) * (\x-1/sqrt(5))/(1-1/sqrt(5));
                   }]
\def\xad{-1.00};  \def\yad{3.40}; \def\xbd{0.05}; \def\ybd{3.49}; \def\xcd{1.75}; \def\ycd{3.11}; \def\xdd{2.80}; \def\ydd{3.00};
\def\xac{-0.723}; \def\yac{2.46}; \def\xbc{0.21}; \def\ybc{2.59}; \def\xcc{1.87}; \def\ycc{2.27}; \def\xdc{2.86}; \def\ydc{2.20};
\def\xab{-0.48};  \def\yab{0.94}; \def\xbb{0.45}; \def\ybb{1.03}; \def\xcb{2.05}; \def\ycb{0.91}; \def\xdb{2.94}; \def\ydb{0.90};
\def\xaa{0.00};   \def\yaa{0.00}; \def\xba{0.83}; \def\yba{-0.03}; \def\xca{2.17}; \def\yca{0.01}; \def\xda{3.00}; \def\yda{0.10};
\draw [>=stealth,->] (2.94,0.90) -- (3.9,0.95) ;
\draw (2.86,2.20) node[below left]  {${\bf u}_h^-$};
\draw (2.86,2.20) node[above right] {${\bf u}_h^+$};
\draw (3.9 ,0.95) node[below right] {$\n_k^\kappa$};
\draw (\xac,\yac) node[left] {$\gray{{\bf x}_k^\kappa}$};
\draw (\xbb,\ybb) node[right]  {${\bf y}_i^\kappa$};
\draw (-0.80,3.45) node {$\gray\bullet$}; \draw (\xbd,\ybd) node {$\gray\bullet$}; \draw (\xcd,\ycd) node {$\gray\bullet$}; \draw (2.60,\ydd) node {$\gray\bullet$};
\draw (-0.9,3.15) node {$\gray\bullet$};  \draw (\xdd,2.80) node {$\gray\bullet$};
\draw (\xac,\yac) node {$\gray\bullet$};  \draw (\xdc,\ydc) node {$\gray\bullet$};
\draw (\xab,\yab) node {$\gray\bullet$};  \draw (\xdb,\ydb) node {$\gray\bullet$};
\draw (-0.15,0.20) node {$\gray\bullet$};  \draw (2.98,0.29) node {$\gray\bullet$};
\draw (0.20,-0.03) node {$\gray\bullet$}; \draw (\xba,\yba) node {$\gray\bullet$}; \draw (\xca,\yca) node {$\gray\bullet$}; \draw (2.80,0.07) node {$\gray\bullet$};
\draw (-0.70,3.20) node {$\bullet$}; \draw (\xbd,3.29) node {$\bullet$}; \draw (\xcd,2.91) node {$\bullet$}; \draw (2.60,2.80) node {$\bullet$};
\draw (-0.523,\yac) node {$\bullet$}; \draw (\xbc,\ybc) node {$\bullet$}; \draw (\xcc,\ycc) node {$\bullet$}; \draw (2.66,\ydc) node {$\bullet$};
\draw (-0.28,\yab) node {$\bullet$}; \draw (\xbb,\ybb) node {$\bullet$}; \draw (\xcb,\ycb) node {$\bullet$}; \draw (2.74,\ydb) node {$\bullet$};
\draw (0.10,0.20) node {$\bullet$}; \draw (\xba,0.17) node {$\bullet$}; \draw (\xca,0.19) node {$\bullet$}; \draw (2.80,0.30) node {$\bullet$};
\draw [domain=-1:1] plot ({la(\x)*\xaa+lb(\x)*\xba+lc(\x)*\xca+ld(\x)*\xda}, {la(\x)*\yaa+lb(\x)*\yba+lc(\x)*\yca+ld(\x)*\yda});
\draw [domain=-1:1] plot ({la(\x)*\xad+lb(\x)*\xbd+lc(\x)*\xcd+ld(\x)*\xdd}, {la(\x)*\yad+lb(\x)*\ybd+lc(\x)*\ycd+ld(\x)*\ydd});
\draw [domain=-1:1] plot ({la(\x)*\xda+lb(\x)*\xdb+lc(\x)*\xdc+ld(\x)*\xdd}, {la(\x)*\yda+lb(\x)*\ydb+lc(\x)*\ydc+ld(\x)*\ydd});
\draw [domain=-1:1] plot ({la(\x)*\xaa+lb(\x)*\xab+lc(\x)*\xac+ld(\x)*\xad}, {la(\x)*\yaa+lb(\x)*\yab+lc(\x)*\yac+ld(\x)*\yad});
%
\end{tikzpicture}
\caption{Notations for $d=2$ on a quadrangle: definitions of the unit outward normal vector $\n_k^\kappa$, element quadrature nodes ${\bf y}_i^\kappa$ (black bullets), surface quadrature node ${\bf x}_k^\kappa$ (gray bullets), and inner and outer traces ${\bf u}_h^\pm$ at ${\bf x}_k^\kappa$.}
\label{fig:stencil_2D}
\end{center}
\end{figure}

The geometrical quantities depend on the numerical method under consideration and examples will be given in \cref{sec:schemes}. By ${\bf h}$ we denote a consistent, conservative \cref{eq:conserv_consist_flux}, and IDP (see \cref{def:invdom2ptflx}) two-point flux. The cell-averaged solution $\langle \uu_h^{(n)} \rangle_\kappa$ is supposed to be evaluated through a suitable quadrature rule, that includes $N_v$ volume quadrature points ${\bf y}_i^\kappa$ in $\kappa$ together with the $N_f$ surface points ${\bf x}_i^\kappa$ on $\partial\kappa$ (see \cref{fig:stencil_2D}) introduced in \cref{eqn:meaneqngen}:
\begin{equation}\label{eqn:surfquad}
\langle \uu_h^{(n)} \rangle_{\kappa}=\sum\limits_{i=1}^{N_v} \nu_i^\kappa \uu_h({\bf y}_i^\kappa,t^{(n)}) + \sum\limits_{i=1}^{N_f} \beta_i^\kappa \uu_h^-({\bf x}_i^\kappa,t^{(n)}),
\end{equation}

\noindent where the weights $\nu_{1\leq i\leq N_v}^\kappa \geq 0$ and $\beta_{1\leq i\leq N_f}^\kappa>0$ are assumed to satisfy
\begin{equation}
    \label{eqn:surfquadsum}
    \sum\limits_{i=1}^{N_v}\nu_i^\kappa+\sum\limits_{i=1}^{N_f}\beta_i^\kappa=1.
\end{equation}

\Cref{lem:existquad} and \cref{th:quad_modal} give a theoretical basis on existence of such a quadrature on polygonal or polyhedral mesh elements. An explicit example is also given in \cite{zhang2012maximum} in the case of triangles and in \cref{sec:schemes}.

\begin{lemma}
\label{lem:existquad}
Let $\kappa$ be a compact subset of $\R^d$ and $\Po_\kappa$ a finite dimensional subspace of ${\cal C}^0(\kappa,\R)$, that contains the constant function $f\equiv 1_\kappa$. Suppose that there exists a quadrature $(\varpi_i^\kappa,{\bf y}_i^\kappa)_{1\leq i\leq N_v}$ with positives weights $\varpi_i^\kappa>0$ and points ${\bf y}_i^\kappa$ in $\kappa$ that integrates exactly products of functions in $\Po_\kappa$:
\begin{equation}
    \label{eqn:exactint}
    \int_\kappa f({\bf y}) g({\bf y}) dV =\sum\limits_{i=1}^{N_v} \varpi_i^\kappa f({\bf y}_i^\kappa) g({\bf y}_i^\kappa) \quad \forall f,g \in \Po_\kappa.
\end{equation}

Then, for given points $({\bf x}_i^\kappa)_{1\leq i\leq N_f}$ in $\kappa$ there exist nonnegative $(\nu_i^\kappa)_{1\leq i\leq N_v}$ and positive $(\beta_i^\kappa)_{1\leq i\leq N_{f}}$ coefficients such that 
\begin{equation}\label{eqn:exactint_on_Pk}
 \langle f \rangle_{\kappa}=\sum\limits_{i=1}^{N_v} \nu_i^\kappa f({\bf y}_i^\kappa) + \sum\limits_{i=1}^{N_f} \beta_i^\kappa f({\bf x}_i^\kappa) \quad \forall f \in \Po_\kappa.
\end{equation}
\end{lemma}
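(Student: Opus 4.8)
The plan is to start from the volume-only quadrature already hidden in the hypothesis and then transfer a small, controlled amount of weight onto the prescribed surface nodes $({\bf x}_i^\kappa)$. First I would set $g\equiv 1_\kappa$ in the product-exactness assumption \cref{eqn:exactint}; since $1_\kappa\in\Po_\kappa$ this yields $\int_\kappa f\,dV=\sum_{i=1}^{N_v}\varpi_i^\kappa f({\bf y}_i^\kappa)$ for every $f\in\Po_\kappa$, and taking moreover $f\equiv 1_\kappa$ gives $|\kappa|=\sum_i\varpi_i^\kappa$. Dividing by $|\kappa|$ produces a representation of the cell average $\langle f\rangle_\kappa=\sum_i\omega_i^\kappa f({\bf y}_i^\kappa)$ with strictly positive weights $\omega_i^\kappa=\varpi_i^\kappa/|\kappa|$ that sum to one. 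This is the rule I will perturb.

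The key step, and the place where the full strength of \cref{eqn:exactint} (exactness for \emph{products}, not merely for single functions) is needed, is to express the value of $f$ at each surface node ${\bf x}_j^\kappa$ as a linear combination of its values at the volume nodes. Consider the symmetric bilinear form $(f,g)\mapsto\int_\kappa fg\,dV$ on the finite-dimensional space $\Po_\kappa$; it is an inner product because a continuous $f\in\Po_\kappa$ with $\int_\kappa f^2\,dV=0$ must vanish on $\kappa$. By \cref{eqn:exactint} this inner product coincides with $\sum_i\varpi_i^\kappa f({\bf y}_i^\kappa)g({\bf y}_i^\kappa)$, so $\sum_i\varpi_i^\kappa f({\bf y}_i^\kappa)^2=0$ together with $\varpi_i^\kappa>0$ forces $f\equiv 0$. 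Hence the evaluation map $E:f\mapsto\big(f({\bf y}_i^\kappa)\big)_{1\le i\le N_v}$ is injective, and every linear functional on $\Po_\kappa$, in particular each point evaluation $f\mapsto f({\bf x}_j^\kappa)$, factors through $E$: there are real coefficients $c_{ji}$ with $f({\bf x}_j^\kappa)=\sum_{i=1}^{N_v}c_{ji}\,f({\bf y}_i^\kappa)$ for all $f\in\Po_\kappa$. Equivalently, $c_{ji}=\varpi_i^\kappa\,\ell_j({\bf y}_i^\kappa)$, where $\ell_j\in\Po_\kappa$ is the Riesz representative of the evaluation at ${\bf x}_j^\kappa$ for this inner product.

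It then remains to redistribute weight. For parameters $\beta_j^\kappa>0$ to be fixed I would replace $\beta_j^\kappa f({\bf x}_j^\kappa)$ by $\sum_i\beta_j^\kappa c_{ji}f({\bf y}_i^\kappa)$ and set $\nu_i^\kappa:=\omega_i^\kappa-\sum_{j=1}^{N_f}\beta_j^\kappa c_{ji}$, so that $\sum_i\nu_i^\kappa f({\bf y}_i^\kappa)+\sum_j\beta_j^\kappa f({\bf x}_j^\kappa)=\sum_i\omega_i^\kappa f({\bf y}_i^\kappa)=\langle f\rangle_\kappa$ holds identically on $\Po_\kappa$, which is exactly \cref{eqn:exactint_on_Pk}. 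The weights $\beta_j^\kappa$ are positive by construction, and the normalization $\sum_i\nu_i^\kappa+\sum_j\beta_j^\kappa=1$ follows automatically by testing the obtained identity against $f\equiv 1_\kappa$. The only genuine requirement left is $\nu_i^\kappa\ge 0$, which I regard as the main obstacle: since the $\omega_i^\kappa>0$ are fixed and the $c_{ji}$ are finite, choosing the $\beta_j^\kappa$ uniformly small, for instance $\beta_j^\kappa\le \min_i\omega_i^\kappa/\big(1+N_f\max_{j,i}|c_{ji}|\big)$, guarantees $\sum_j\beta_j^\kappa c_{ji}\le\omega_i^\kappa$ for every $i$ and hence $\nu_i^\kappa\ge 0$. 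This smallness choice, forcing the surface contributions to be a small perturbation of the volume rule, is the only place where quantitative control enters the argument.
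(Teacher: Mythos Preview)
Your proof is correct and follows essentially the same route as the paper: both use the inner-product structure on $\Po_\kappa$ (equivalently, injectivity of the nodal evaluation map, equivalently Riesz representation) to write the surface point evaluations as linear combinations of volume node values, and then shrink the surface coefficients so that the resulting volume weights $\nu_i^\kappa$ stay nonnegative. The only cosmetic difference is that the paper bundles the surface evaluations into a single linear form $f\mapsto\sum_i s_i^\kappa f({\bf x}_i^\kappa)$ and scales by one parameter $\varepsilon_\kappa=\min_{\{i:\alpha_i^\kappa>0\}}\varpi_i^\kappa/\alpha_i^\kappa$, which is the largest admissible value, whereas you treat each ${\bf x}_j^\kappa$ separately and use a cruder uniform smallness bound on the $\beta_j^\kappa$; either choice suffices for the lemma as stated.
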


\begin{proof}
It is known that $(f,g) \mapsto \int_{\kappa} f({\bf y})g({\bf y})dV$ defines a scalar product on ${\cal C}^0(\kappa,\R)$ and therefore on $\Po_\kappa$. Then using the Riesz representation theorem, every linear form $\varphi: \Po_\kappa \rightarrow \R$ can be represented using this scalar product: there exists $f_{\varphi}\in \Po_\kappa$ such that for every $g \in \Po_\kappa$, $\varphi(g)=\int_\kappa f_{\varphi}({\bf y}) g({\bf y}) dV$, then let $\alpha^\varphi_i=\varpi_i^\kappa f_{\varphi}({\bf y}_i^\kappa)$, we obtain for every $g \in \Po_\kappa$, $\varphi(g)=\sum_{i=1}^{N_v} \alpha_i^\varphi g({\bf y}_i^\kappa)$. Now, since $f \mapsto \sum_{i=1}^{N_{f}} s_k^\kappa f({\bf x}_i^\kappa)$ defines a linear form on $\Po_\kappa$, for any $s_k^\kappa>0$, it can be represented this way: there exist $(\alpha_i^\kappa)_{1\leq i\leq N_v}$ such that:
\begin{equation}\label{eqn:defalpha2}
    \sum\limits_{i=1}^{N_{f}} s_i^\kappa f({\bf x}_i^\kappa)=\sum\limits_{i=1}^{N_v} \alpha_i^\kappa f({\bf y}_i^\kappa)\quad\forall f \in \Po_\kappa.
\end{equation}

As the constant function is in $\Po_\kappa$, we have for $f$ in $\Po_\kappa$, $\langle f \rangle_\kappa = \sum_{i=1}^{N_v} \varpi_i^\kappa f({\bf y}_i^\kappa)$, so for $\varepsilon_\kappa>0$  
\begin{align*}
\langle f \rangle_\kappa = \sum\limits_{i=1}^{N_v} \varpi_i^\kappa f({\bf y}_i^\kappa) 
 &= \sum\limits_{i=1}^{N_v} \varpi_i^\kappa f({\bf y}_i^\kappa) - \varepsilon_\kappa \sum\limits_{i=1}^{N_{f}} s_i^\kappa f({\bf x}_i^\kappa) + \varepsilon_\kappa \sum\limits_{i=1}^{N_{f}} s_i^\kappa f({\bf x}_i^\kappa)\\
&= \sum\limits_{i=1}^{N_v} \varpi_i^\kappa f({\bf y}_i^\kappa) - \varepsilon_\kappa \sum\limits_{i=1}^{N_v} \alpha_i^\kappa f({\bf y}_i^\kappa) + \varepsilon_\kappa \sum\limits_{i=1}^{N_{f}} s_i^\kappa f({\bf x}_i^\kappa)\\
&= \sum\limits_{i=1}^{N_v} (\varpi_i^\kappa - \varepsilon_\kappa \alpha_i^\kappa) f({\bf y}_i^\kappa) + \varepsilon_\kappa \sum\limits_{i=1}^{N_{f}} s_i^\kappa f({\bf x}_i^\kappa).
\end{align*}

Since the $\varpi_i^\kappa$ are positive, for $\varepsilon_\kappa=\min_{\{i : \alpha_i^\kappa>0\}}(\frac{\varpi_i^\kappa}{\alpha_i^\kappa})>0$, the $(\varpi_i^\kappa - \varepsilon_\kappa \alpha_i^\kappa)$ are nonnegative, then \cref{eqn:exactint_on_Pk} holds with 
\begin{equation}\label{eqn:defalpha}
 \nu_i^\kappa=\varpi_i^\kappa - \varepsilon_\kappa \alpha_i^\kappa \geq0 \quad \forall 1\leq i\leq N_v, \quad \beta_i^\kappa=\varepsilon_\kappa s_i^\kappa>0 \quad \forall 1\leq i\leq N_{f}.
\end{equation}
\end{proof}

\begin{remark}
Note that $\kappa$ is usually a polyhedron and $\Po_\kappa$ a polynomial space, we can subdivide $\kappa$ into simplices and since there are quadrature rules integrating exactly arbitrary order polynomials on simplices, the previous lemma can be applied. Likewise, the existence of the quadrature in \cref{lem:existquad} is required only for modal methods or when the DOFs are not defined at the ${\bf x}_k^\kappa$ in \cref{eqn:meaneqngen}, so the present framework also holds for non polynomial nodal approximations with DOFs at the faces as in \cite{Crean_etal_SBP_curved_2018}.
\end{remark}

The following corollary allows to explicitely define the quadrature \cref{eqn:surfquad} for modal polynomial based methods.

\begin{corollary}\label{th:quad_modal}
Given a basis $(\phi_j)_{1\leq j\leq N_p}$ of $\Po_\kappa$ which is orthonormal with respect to the inner product (i.e., $\int_\kappa\phi_i({\bf x})\phi_j({\bf x})dV=\delta_{ij}$ where $\delta_{ij}$ is the Kronecker symbol), then the $\alpha_i^\kappa$ in \cref{eqn:defalpha} read 
\begin{equation}
\label{eqn:alphamodal}
    \alpha_i^\kappa=\varpi_i^\kappa\sum\limits_{j=1}^{N_p} \sum\limits_{k=1}^{N_{f}} s_k^\kappa \phi_j({\bf x}_k^\kappa) \phi_j ({\bf y}_i^\kappa).
\end{equation}
\end{corollary}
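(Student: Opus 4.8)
The plan is to make the Riesz representative $f_\varphi$ introduced in the proof of \cref{lem:existquad} explicit by expanding it in the orthonormal basis $(\phi_j)_{1\leq j\leq N_p}$. Recall from that proof that $\varphi$ denotes the linear form $f\mapsto\sum_{k=1}^{N_f}s_k^\kappa f({\bf x}_k^\kappa)$ on $\Po_\kappa$, that $f_\varphi\in\Po_\kappa$ is characterized by $\varphi(g)=\int_\kappa f_\varphi({\bf y})g({\bf y})\,dV$ for all $g\in\Po_\kappa$, and that the coefficients of \cref{eqn:defalpha2} are given by $\alpha_i^\kappa=\varpi_i^\kappa f_\varphi({\bf y}_i^\kappa)$. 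The whole task is therefore to evaluate $f_\varphi$ at the volume nodes ${\bf y}_i^\kappa$.

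First I would write $f_\varphi=\sum_{j=1}^{N_p}c_j\phi_j$, which is legitimate since $(\phi_j)$ is a basis of $\Po_\kappa$. Because the basis is orthonormal for the inner product $(f,g)\mapsto\int_\kappa f g\,dV$, each coefficient is recovered by $c_j=\int_\kappa f_\varphi({\bf y})\phi_j({\bf y})\,dV$. Applying the defining property of $f_\varphi$ with the test function $g=\phi_j$ then gives $c_j=\varphi(\phi_j)=\sum_{k=1}^{N_f}s_k^\kappa\phi_j({\bf x}_k^\kappa)$. Substituting back yields $f_\varphi({\bf y}_i^\kappa)=\sum_{j=1}^{N_p}\big(\sum_{k=1}^{N_f}s_k^\kappa\phi_j({\bf x}_k^\kappa)\big)\phi_j({\bf y}_i^\kappa)$, and multiplying by $\varpi_i^\kappa$ according to $\alpha_i^\kappa=\varpi_i^\kappa f_\varphi({\bf y}_i^\kappa)$ reproduces \cref{eqn:alphamodal} with exactly the stated double sum.

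I expect no genuine obstacle here: the statement is essentially a one-line consequence of the Riesz identity once the orthonormal expansion is written down, and it requires no exact integration of products beyond the inner-product structure already invoked in \cref{lem:existquad}. The only points needing a little care are bookkeeping ones — reusing the same linear form $\varphi$, the same representative $f_\varphi$, and the same normalisation $\alpha_i^\kappa=\varpi_i^\kappa f_\varphi({\bf y}_i^\kappa)$ from the proof of \cref{lem:existquad}, so that the indices $j$ over the basis and $k$ over the surface nodes appear in the order claimed in \cref{eqn:alphamodal}.
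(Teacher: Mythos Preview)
Your proposal is correct and follows essentially the same approach as the paper: expand the Riesz representative $f_\varphi$ (the paper calls it $g$) in the orthonormal basis, test against each $\phi_j$ to read off the coefficients $c_j=\sum_k s_k^\kappa\phi_j({\bf x}_k^\kappa)$, and substitute back into $\alpha_i^\kappa=\varpi_i^\kappa f_\varphi({\bf y}_i^\kappa)$. The only cosmetic difference is that the paper passes through the quadrature form of the inner product (exact on products in $\Po_\kappa$) while you use the integral directly.
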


\begin{proof}
We know that there exists $g \in \Po_\kappa$ such that (see proof of \cref{lem:existquad})
\begin{equation}
\label{eqn:surfrepbasis}
    \sum\limits_{i=1}^{N_{f}} s_i^\kappa f({\bf x}_i^\kappa) = \sum\limits_{i=1}^{N_v} \varpi_i^\kappa f({\bf y}_i^\kappa) g({\bf y}_i^\kappa) \quad \forall f \in \Po_\kappa.
\end{equation}

Expanding $g\equiv\sum\limits_{k=1}^{N_p} g_k \phi_k$ in the orthonormal basis and using \cref{eqn:surfrepbasis} with $f\equiv\phi_j$ we get 
\begin{equation*}
    \sum\limits_{i=1}^{N_{f}} s_i^\kappa \phi_j({\bf x}_i^\kappa)=\sum\limits_{i=1}^{N_v} \varpi_i^\kappa \phi_j({\bf y}_i^\kappa) \sum\limits_{k=1}^{N_p} g_k \phi_k ({\bf y}_i^\kappa) =g_j,
\end{equation*}

\noindent for all $1\leq j\leq N_p$, by orthonormality of the basis. Substituting $g$ in \cref{eqn:surfrepbasis} by its expansion in the basis gives
\begin{equation*}
     \sum\limits_{i=1}^{N_{f}} s_i^\kappa f({\bf x}_i^\kappa)=\sum\limits_{i=1}^{N_v} \varpi_i^\kappa f({\bf y}_i^\kappa) \sum\limits_{j=1}^{N_p} \sum\limits_{k=1}^{N_{f}} s_k^\kappa \phi_j({\bf x}_k^\kappa) \phi_j ({\bf y}_i^\kappa) \quad\forall f \in \Po_\kappa,
\end{equation*}

\noindent and we conclude by comparing this result with \cref{eqn:defalpha2}.
%
\end{proof}

Finally, the scheme is assumed to preserve uniform states in the following sense:
\begin{equation}
\label{eqn:closesurface}
\sum\limits_{k=1}^{N_f} s_k^\kappa \n_k^\kappa=0,
\end{equation}

\noindent which is a discrete version of  $\tfrac{1}{|\kappa|}\oint_{\partial\kappa} \n dS=0$ for a closed contour. Relation \cref{eqn:closesurface} is closely related to the discrete geometric conservation laws \cite{thomas_lombard_GCL_79} and is required for the numerical scheme to preserve free-stream states \cite{kopriva_metric_id_06}. In \cref{sec:experiments} we will present schemes that satisfies assumptions \cref{eqn:meaneqngen}, \cref{eqn:surfquad} and \cref{eqn:closesurface}.

\subsection{The pseudo-equilibrium state}\label{sec:pseudo_eq_state}

We first introduce the Rusanov flux \cite{Rusanov1961} : 
\begin{equation}\label{eq:Rusanov_flux}
 {\bf h}_\lambda(\uu_L, \uu_R,\n)=\frac{{\bf f}(\uu_L)\cdot \n+{\bf f}(\uu_R)\cdot \n}{2}-\frac{\lambda}{2}(\uu_R-\uu_L),
\end{equation}

\noindent for $\lambda \geq |\lambda|(\uu_L,\uu_R,\n)$ defined in \cref{eqn:maxwavespeed}. Note that the Rusanov flux is derived from the following ARS:
\begin{equation*}
{\cal W}_\lambda(\xi,\uu_L,\uu_R,\n)=\left\{
\begin{array}{ll}
     \uu_L, & \xi < -\lambda,  \\
     \frac{\uu_L+\uu_R}{2}-\frac{1}{2 \lambda}({\bf f}(\uu_R)\cdot \n-{\bf f}(\uu_L)\cdot \n), & -\lambda < \xi < \lambda, \\
     \uu_R, & \lambda < \xi,  \\
\end{array}
\right.
\end{equation*}

\noindent so from \cref{eqn:Riemavg} we know that it is IDP, see also \cite{Frid_idp_LF_01}. We now state a result that will allow us to rewrite \cref{eqn:meaneqngen} with updates of three-point schemes.

\begin{lemma}[pseudo-equilibrium state]
\label{lem:bar}
Suppose that the numerical scheme satisfies \cref{eqn:meaneqngen}, \cref{eqn:surfquad} and \cref{eqn:closesurface}. Let $\B$ be a invariant domain and suppose that the internal traces $\uu_h^-({\bf x}_k^\kappa,t^{(n)})_{1\leq k\leq N_f}$ are in $\B$, then there exists $\uu^\star_\kappa=\uu^\star_\kappa(t^{(n)})$ in $\B$ and ${\lambda}_\kappa^\star={\lambda}_\kappa^\star(t^{(n)})>0$ finite such that  
 \begin{equation}
 \label{eqn:defubar}
    \sum_{k=1}^{N_f}s_k^\kappa{\bf h}_{ \lambda_\kappa^\star}\big(\uu^\star_\kappa,\uu_h^-({\bf x}_k^\kappa,t^{(n)}),\n_k^\kappa\big)=0, 
\end{equation}

\noindent with ${\bf h}_{\lambda_\kappa^\star}$ defined in \cref{eq:Rusanov_flux} with $\lambda=\lambda_\kappa^\star$ where
\begin{equation}
\label{eqn:deflambar}
    \lambda^\star_\kappa \geq \max_{1\leq k\leq N_f} \big(|\lambda|( \uu^\star_\kappa, \uu_h^-({\bf x}_k^\kappa,t^{(n)}),\n_k^\kappa)\big),
\end{equation}
 
\noindent and the pseudo-equilibrium state is defined by
\begin{equation}
    \label{eqn:expustar1}
    \uu^\star_\kappa=\sum_{k=1}^{N_f} \Tilde{\gamma}_k^\kappa \bigg(\uu_h^-({\bf x}_k^\kappa,t^{(n)})-\frac{{\bf f}\big(\uu_h^-({\bf x}_k^\kappa,t^{(n)})\big)\cdot \n_k^\kappa}{ \lambda^\star_\kappa}\bigg), \quad \Tilde{\gamma}_k^\kappa:=\frac{s_k^\kappa}{{\cal S}^\kappa}.
\end{equation}
\end{lemma}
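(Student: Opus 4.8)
The plan is to reduce the statement to two essentially independent facts: an \emph{algebraic identity} that produces the explicit formula \cref{eqn:expustar1} and turns \cref{eqn:defubar} into an explicit (rather than implicit) equation, and a \emph{fixed-point argument} showing that the resulting state lies in $\B$. Throughout I abbreviate $\uu_k:=\uu_h^-({\bf x}_k^\kappa,t^{(n)})\in\B$. First, the algebra: I insert the Rusanov flux \cref{eq:Rusanov_flux} into the balance \cref{eqn:defubar} and expand. The contribution carrying ${\bf f}(\uu^\star_\kappa)$ is $\tfrac12{\bf f}(\uu^\star_\kappa)\cdot\sum_k s_k^\kappa\n_k^\kappa$, which vanishes by the geometric conservation law \cref{eqn:closesurface}. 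This is the crucial simplification, since it removes the only nonlinear dependence on $\uu^\star_\kappa$ and leaves a \emph{linear} equation in $\uu^\star_\kappa$; solving it (dividing by $\tfrac{\lambda^\star_\kappa}{2}{\cal S}^\kappa$ and using \cref{eqn:defbordkappa}) yields exactly \cref{eqn:expustar1}, for \emph{any} $\lambda^\star_\kappa>0$. Hence \cref{eqn:defubar} and \cref{eqn:expustar1} are equivalent, and it remains only to (i) show the state so defined lies in $\B$ and (ii) fix $\lambda^\star_\kappa$ finite so that \cref{eqn:deflambar} holds.

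For membership, the naive termwise idea fails: $\uu^\star_\kappa$ is the convex combination $\sum_k\Tilde\gamma_k^\kappa\mathbf{w}_k$ with $\mathbf{w}_k=\uu_k-{\bf f}(\uu_k)\cdot\n_k^\kappa/\lambda^\star_\kappa$, but the individual $\mathbf{w}_k$ need \emph{not} belong to $\B$ (already for scalar Burgers), so convexity of $\B$ alone does not conclude. Instead I read \cref{eqn:defubar} as the stationarity condition of the first-order finite-volume update with a uniform internal state, $T(\ev):=\ev-\Delta t\sum_k s_k^\kappa{\bf h}_{\lambda^\star_\kappa}(\ev,\uu_k,\n_k^\kappa)$, so that the pseudo-equilibrium is precisely the fixed point of $T$. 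Two properties make this effective. Using \cref{eqn:closesurface} once more to kill $\tfrac12{\bf f}(\ev)\cdot\sum_k s_k^\kappa\n_k^\kappa$, the map $T$ is in fact \emph{affine} with linear part $\big(1-\tfrac12\Delta t\lambda^\star_\kappa{\cal S}^\kappa\big)\mathrm{Id}$, hence a contraction whenever $\Delta t\lambda^\star_\kappa{\cal S}^\kappa\le1$. Independently, regrouping $T(\ev)=(1-\Delta t\lambda^\star_\kappa{\cal S}^\kappa)\ev+\Delta t\lambda^\star_\kappa\sum_k s_k^\kappa\mathbf{z}_k$ with $\mathbf{z}_k:=\ev-\tfrac1{\lambda^\star_\kappa}\big({\bf h}_{\lambda^\star_\kappa}(\ev,\uu_k,\n_k^\kappa)-{\bf f}(\ev)\cdot\n_k^\kappa\big)$ exhibits $T(\ev)$ as a convex combination; since the Rusanov ARS is IDP, \cref{lem:interfIRP} gives $\mathbf{z}_k\in\B$ whenever $\ev\in\B$ and $\lambda^\star_\kappa\ge|\lambda|(\ev,\uu_k,\n_k^\kappa)$, so $T$ maps (a suitable part of) $\B$ into $\B$.

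To combine these I restrict $T$ to $K:=\overline{B}(\bar\uu_0,r)\cap\B$, where $\bar\uu_0:=\sum_k\Tilde\gamma_k^\kappa\uu_k\in\B$ is the convex combination obtained as $\lambda^\star_\kappa\to\infty$ and $r>0$ is fixed. The set $K$ is compact and convex, and $M_K:=\sup_{\ev\in K}\max_k|\lambda|(\ev,\uu_k,\n_k^\kappa)$ is finite by Lipschitz continuity of the flux on $K$. Choosing $\lambda^\star_\kappa\ge M_K$ validates the hypothesis of \cref{lem:interfIRP} for every $\ev\in K$, so $T(K)\subseteq\B$; a short estimate using $\|\uu^\star_\kappa-\bar\uu_0\|=O(1/\lambda^\star_\kappa)$ together with the contraction factor then gives $T(K)\subseteq K$ for $\lambda^\star_\kappa$ additionally larger than a fixed multiple of $\|\tfrac1{{\cal S}^\kappa}\sum_k s_k^\kappa{\bf f}(\uu_k)\cdot\n_k^\kappa\|$. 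Banach's theorem (or Brouwer's, since the fixed point is unique by the linear solve of the first paragraph) then places $\uu^\star_\kappa$ inside $K\subseteq\B$. Crucially, the self-consistency \cref{eqn:deflambar} is now automatic: the same bound gives $\lambda^\star_\kappa\ge M_K\ge\max_k|\lambda|(\uu^\star_\kappa,\uu_k,\n_k^\kappa)$, so no separate fixed-point in $\lambda^\star_\kappa$ is needed.

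I expect the membership step to be the main obstacle. The difficulty is precisely that no finite $\lambda^\star_\kappa$ can dominate the wave speeds uniformly over a possibly unbounded $\B$, which is what forces the restriction to the compact set $K$ and the coupled, simultaneous choice of $\uu^\star_\kappa$ and $\lambda^\star_\kappa$; verifying that $T$ is a self-map of $K$ (and not merely of $\B$) is the technical heart, and it is there that the interplay between the geometric conservation law \cref{eqn:closesurface} (making $T$ affine and contractive) and the interface IDP property \cref{lem:interfIRP} (making $T$ invariant-domain preserving) is genuinely used.
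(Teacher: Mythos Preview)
Your argument is correct and takes a genuinely different route from the paper's. Both proofs hinge on the same two ingredients---the geometric conservation law \cref{eqn:closesurface}, which eliminates the nonlinear ${\bf f}(\uu^\star_\kappa)$ term and makes \cref{eqn:defubar} \emph{linear} in $\uu^\star_\kappa$, and the IDP property of the Rusanov solver, which keeps iterates inside $\B$---but they are assembled differently. The paper builds coupled sequences $(\uu^\star_p,\lambda_p)$ with $\lambda_p$ increased adaptively (including an ad hoc $1/d(\uu^\star_p,\partial\B)$ penalty to prevent the limit from escaping to the boundary), then proves convergence by a somewhat involved series estimate. Your approach instead fixes $\lambda^\star_\kappa$ large enough \emph{a priori}, recognizes that the resulting update $T$ is affine with contraction factor $1-\tfrac12\Delta t\lambda^\star_\kappa{\cal S}^\kappa$, and reads off the fixed point directly; the IDP decomposition $T(\ev)=(1-\Delta t\lambda^\star_\kappa{\cal S}^\kappa)\ev+\Delta t\lambda^\star_\kappa\sum_k s_k^\kappa\mathbf z_k$ is used only to trap the fixed point inside $K\subset\B$. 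Your argument is conceptually cleaner and makes the affine structure explicit, whereas the paper's iterative construction, though heavier, yields the practical algorithms of \cref{sec:evaluation_ustar} as a byproduct.

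One technical point deserves care: you assert that $K=\overline{B}(\bar\uu_0,r)\cap\B$ is compact, but this fails if $\B$ is not closed and $r$ is too large. The fix is easy---if $\B$ is open choose $r<d(\bar\uu_0,\partial\B)$ so that $K=\overline{B}(\bar\uu_0,r)$; if $\B$ is closed the intersection is already compact---and this is precisely the topological issue the paper handles with its $1/d(\uu^\star_p,\partial\B)$ term. You should also justify finiteness of $M_K$ via continuity of the maximal wave speed $|\lambda|$ on compact subsets of $\Omega^a$ rather than ``Lipschitz continuity of the flux''; the two are related but not identical.
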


\begin{proof}
For the sake of clarity, we remove the time dependance of $\uu_h$ since all evaluations are done at $t^{(n)}$ and write $\uu_h^-({\bf x}_k^\kappa)$ for $\uu_h^-({\bf x}_k^\kappa,t^{(n)})$. We first remark that from \cref{eqn:defbordkappa,eqn:expustar1}, we have
\begin{equation}
    \label{eqn:sumtildgamma}
    \sum_{k=1}^{N_f}\Tilde{\gamma}_k^\kappa=1.
\end{equation}

We introduce the following two sequences:
\begin{equation}
\label{eqn:sequences}
\left\{  \begin{array}{l}
\uu^\star_0=\sum\limits_{k=1}^{N_f} \Tilde{\gamma}_k^\kappa \uu_h^-({\bf x}_k^\kappa), \quad
\lambda_0 = \max\limits_{1\leq k\leq N_f}\big( |\lambda|( \uu^\star_0 , \uu_h^-({\bf x}_k^\kappa),\n_k^\kappa)\big)\Big), 
\\
\lambda_{p+1}=\max\Big(\lambda_p, \frac{1}{d(\uu^\star_p,\partial B)} + \max\limits_{1\leq k\leq N_f}\big( |\lambda|( \uu^\star_p , \uu_h^-({\bf x}_k^\kappa),\n_k^\kappa)\big)\Big), \quad p\geq0,\\
\uu^\star_{p+1}=\displaystyle\sum_{k=1}^{N_f} \Tilde{\gamma}_k^\kappa \bigg(\frac{\uu_h^-({\bf x}_k^\kappa)+\uu^\star_p}{2}-\frac{{\bf f}\big(\uu_h^-({\bf x}_k^\kappa)\big)\cdot \n_k^\kappa- {\bf f}(\uu^\star_p)\cdot \n_k^\kappa}{2 \lambda_{p+1}}\bigg), \quad p\geq0,
\end{array}
\right. 
\end{equation}

\noindent where $d(\uu, \partial \B)=\inf_{\vv \in \partial \B}\|\uu-\vv\|$ is the distance from $\uu$ to the boundary of $\B$. We will show that both sequences converge and the limits satisfy \cref{eqn:defubar} and \cref{eqn:deflambar}. We first remark that $(\lambda_p)$ is non decreasing and therefore converges to some $\lambda^\star_\kappa$ in $\R \cup \{+ \infty \}$ and that for all $p\geq 0$, $\uu^\star_{p}$ is in $\B$ since $\lambda_{p+1}\geq |\lambda|(\uu_h^-({\bf x}_k^\kappa), \uu^\star_p,\n_k^\kappa)$ so we can use \cref{eqn:Riemavg} with $\Delta t=\frac{1}{2 \lambda_{p+1}}$. Using successively \cref{eqn:closesurface} and the definition of $\Tilde{\gamma}_k^\kappa$ in \cref{eqn:expustar1}, then \cref{eqn:sumtildgamma}, we have
\begin{align*}
 \uu^\star_{p+1}&=\sum\limits_{k=1}^{N_f} \Tilde{\gamma}_k^\kappa \bigg(\frac{\uu_h^-({\bf x}_k^\kappa)+\uu^\star_p}{2}-\frac{{\bf f}(\uu_h^-({\bf x}_k^\kappa))\cdot \n_k^\kappa}{2 \lambda_{p+1}}\bigg) \\
 &=\frac{\uu^\star_p}{2}+\frac{1}{2}\sum\limits_{k=1}^{N_f} \Tilde{\gamma}_k^\kappa \uu_h^-({\bf x}_k^\kappa)-\frac{1}{2\lambda_{p+1}}\sum\limits_{k=1}^{N_f} \Tilde{\gamma}_k^\kappa {\bf f}(\uu_h^-({\bf x}_k^\kappa))\cdot \n_k^\kappa.
\end{align*}

Then the first step of the sequence \cref{eqn:sequences} for $\uu^\star_\kappa$ reads
\begin{equation*}
\uu^\star_{1}=\sum\limits_{k=1}^{N_f} \Tilde{\gamma}_k^\kappa \uu_h^-({\bf x}_k^\kappa)-\frac{1}{2\lambda_{1}}\sum\limits_{k=1}^{N_f} \Tilde{\gamma}_k^\kappa {\bf f}(\uu_h^-({\bf x}_k^\kappa))\cdot \n_k^\kappa,
\end{equation*}

\noindent so applying the recurrence relation $p$ more times, we obtain
\begin{equation}
\label{eqn:expustar}
    \uu^\star_{p+1}=\sum\limits_{k=1}^{N_f} \Tilde{\gamma}_k^\kappa \uu_h^-({\bf x}_k^\kappa)- \sum\limits_{i=1}^{p+1} \frac{1}{2^i \lambda_{(p+2-i)}} \times \sum\limits_{k=1}^{N_f} \Tilde{\gamma}_k^\kappa {\bf f}(\uu_h^-({\bf x}_k^\kappa))\cdot \n_k^\kappa.
\end{equation}

Let show that $\sum_{i=1}^p \frac{1}{2^i \lambda_{(p+1-i)}}$ above converges to $\frac{1}{\lambda^\star_\kappa}$. Assume $\lambda^\star_\kappa$ is finite and let $\epsilon>0$, since $\lambda_p$ converges to $\lambda^\star_\kappa$, there exists $p_0$ such that $|\frac{1}{\lambda_p}-\frac{1}{\lambda^\star_\kappa}|<\epsilon$ for all $p>p_0$. Then for $p>p_0$, we set
\begin{align*}
\sum\limits_{i=1}^p \frac{1}{2^i \lambda_{(p+1-i)}} - \frac{1}{\lambda^\star_\kappa}&=\sum\limits_{k=1}^p \frac{1}{2^{(p+1-k)} \lambda_k} -\frac{1}{\lambda^\star_\kappa} \\
&=\sum\limits_{k=1}^{p_0} \frac{1}{2^{(p+1-k)} \lambda_k}+\sum\limits_{k=p_0+1}^{p} \frac{1}{2^{(p+1-k)} \lambda_k} - \frac{1}{\lambda^\star_\kappa} \\
&=\frac{1}{2^{p+1}}\sum\limits_{k=1}^{p_0} \frac{2^k}{\lambda_k} +\!\!\! \sum\limits_{k=p_0+1}^{p} \frac{2^k}{2^{p+1}}\left(\frac{1}{\lambda_k}\!-\!\frac{1}{\lambda^\star_\kappa}\right) - \frac{1}{\lambda^\star_\kappa}\bigg(1 \!-\!\!\! \sum\limits_{k=p_0+1}^{p} \frac{2^k}{2^{p+1}}\bigg)
\end{align*}

\noindent and using a triangle inequality we obtain 
\begin{align*}
\left|\sum\limits_{i=1}^p \frac{1}{2^i \lambda_{(p+1-i)}} - \frac{1}{\lambda^\star_\kappa} \right|
\leq& \frac{1}{2^{(p+1)}}\sum\limits_{k=1}^{p_0} \frac{2^k}{ \lambda_k} +\!\! \sum\limits_{k=p_0+1}^{p} \frac{2^k}{2^{p+1}}\left|\frac{1}{\lambda_k}-\frac{1}{\lambda^\star_\kappa}\right| \\ 
 &+ \frac{1}{\lambda^\star_\kappa}\bigg(1-\!\! \sum\limits_{k=p_0+1}^{p} \frac{2^k}{2^{p+1}}\bigg) \\
\leq& \frac{1}{2^{(p+1)}}\sum\limits_{k=1}^{p_0} \frac{2^k}{ \lambda_k}+\sum\limits_{i=1}^{p-p_0} \frac{1}{2^i}\epsilon + \frac{1}{\lambda^\star_\kappa}\bigg(1-\sum\limits_{i=1}^{p-p_0}\frac{1}{2^i}\bigg) \\
=& \frac{1}{2^{p+1}}\sum\limits_{k=1}^{p_0} \frac{2^k}{\lambda_k} + \left(1-\frac{1}{2^{p-p_0}}\right)\epsilon + \frac{1}{2^{p-p_0}}\frac{1}{\lambda^\star_\kappa}
\end{align*}

\noindent which clearly converges to $\epsilon$ when $p\rightarrow\infty$, proving our statement and \cref{eqn:expustar1}. 

Let now prove that $\lambda^\star_\kappa$ is finite by contradiction. Suppose that $\lambda_p \rightarrow + \infty$, then $\uu^\star_p \rightarrow \sum_{k=1}^{N_f} \Tilde{\gamma}_k^\kappa \uu_h^-({\bf x}_k^\kappa)$, but the application
\begin{equation*}
 \uu \mapsto \max_{1\leq k\leq N_f}( |\lambda|( \uu, \uu_h^-({\bf x}_k^\kappa),\n_k^\kappa))+\frac{1}{d(\uu,\partial B)}
\end{equation*}

\noindent is continuous in the interior of $\B$ and is hence locally bounded around $\sum_{k=1}^{N_f} \Tilde{\gamma}_k^\kappa \uu_h^-({\bf x}_k^\kappa)$, implying that $\lambda_p$ is bounded and $\lambda^\star_\kappa$ is finite which is a contradiction. By \cref{eqn:sequences}, we obviously have $\lambda_{p+1} \geq \max_{1\leq k\leq N_f}\big( |\lambda|( \uu^\star_p, \uu_h^-({\bf x}_k^\kappa),\n_k^\kappa)\big)$ and passing the inequality to the limit we obtain \cref{eqn:deflambar}.

Let now prove that $\uu^\star_\kappa$ is in $\B$, since for all $p \geq 0$, $\uu^\star_p$ is in $\B$, we already know that $\uu^\star_\kappa$ is in the closure ${\bar \B}$. Now by contradiction, assuming that $\uu^\star_\kappa$ is not in $\B$, we necessarily have $\uu^\star_\kappa$ in $\partial\B$, so $d(\uu^\star_p,\partial\B)\rightarrow 0$ inducing $\lambda_p \rightarrow +\infty$ by \cref{eqn:sequences} which a contradiction. It remains to prove \cref{eqn:defubar}. Using \cref{eqn:closesurface}, we add $\frac{1}{\lambda^\star_\kappa}{\bf f}(\uu^\star_\kappa)\cdot\sum_{k=1}^{N_f} \Tilde{\gamma}_k^\kappa\n_k^\kappa=0$ to \cref{eqn:expustar1} and get
\begin{equation*}
    \uu^\star_\kappa=\sum_{k=1}^{N_f} \Tilde{\gamma}_k^\kappa \bigg(\uu_h^-({\bf x}_k^\kappa)-\frac{{\bf f}(\uu_h^-({\bf x}_k^\kappa))\cdot \n_k^\kappa+{\bf f}(\uu^\star_\kappa)\cdot \n_k^\kappa}{\lambda^\star_\kappa}\bigg).
\end{equation*}

Moving $\uu^\star_\kappa$ to the right-hand side and using \cref{eqn:sumtildgamma} we obtain
\begin{equation*}
    \sum_{k=1}^{N_f} \Tilde{\gamma}_k^\kappa \bigg(\uu_h^-({\bf x}_k^\kappa)-\uu^\star_\kappa - \frac{{\bf f}(\uu_h^-({\bf x}_k^\kappa))\cdot \n_k^\kappa+{\bf f}(\uu^\star_\kappa)\cdot \n_k^\kappa}{ \lambda^\star_\kappa}\bigg)=0,
\end{equation*}

\noindent and multiplying the above quantity by $-\frac{\lambda^\star_\kappa}{2}\sum_{i=1}^{N_f} s_i^\kappa=-\frac{\lambda^\star_\kappa}{2}{\cal S}^\kappa$ we finally get
\begin{equation*}
    \sum_{k=1}^{N_f} s_k^\kappa \bigg( \frac{{\bf f}(\uu_h^-({\bf x}_k^\kappa))\cdot \n_k^\kappa+{\bf f}(\uu^\star_\kappa)\cdot \n_k^\kappa}{2}-\lambda^\star_\kappa \frac{(\uu_h^-({\bf x}_k^\kappa)-\uu^\star_\kappa)}{2}\bigg)=0,
\end{equation*}

\noindent which is exactly \cref{eqn:defubar} from the definition of the Rusanov flux \cref{eq:Rusanov_flux}.
\end{proof}

\subsection{Invariant domain preserving schemes}\label{sec:Dt_IDP}

Using \cref{lem:bar} we now state and prove the main result of this work in the theorem below.

\begin{theorem}[Time step condition]
\label{thm:CFLHO}
Assume that the numerical scheme satisfies \cref{eqn:meaneqngen}, \cref{eqn:surfquad} and \cref{eqn:closesurface}  and assume that $u_h^\pm({\bf x}_k^\kappa,t^{(n)})_{1\leq k\leq N_f}$ and $u_h({\bf y}_i^\kappa,t^{(n)})_{1\leq i\leq N_v}$ are in $\B$, then under the following condition on the time step 
    \begin{equation}
    \label{eqn:CFL}
    \Delta t^{(n)} \max\limits_{\kappa \in \T_h} \max_{1\leq k\leq N_f} \frac{s_k^\kappa}{\beta_k^\kappa} \max\Big(\lambda^\star_\kappa, |\lambda|\big(\uu_h^-({\bf x}_k^\kappa,t^{(n)}),\uu_h^+({\bf x}_k^\kappa,t^{(n)}),\n_k^\kappa\big)\Big)\leq \frac{1}{2},
\end{equation}

\noindent where $\lambda^\star_\kappa$ is defined by \cref{eqn:deflambar}, $\langle u_h^{(n+1)} \rangle_{\kappa}$ is also in $\B$.
\end{theorem}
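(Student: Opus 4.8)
The plan is to exhibit $\langle\uu_h^{(n+1)}\rangle_\kappa$ as a convex combination of the volume trace values $\uu_h({\bf y}_i^\kappa,t^{(n)})$, which lie in $\B$ by hypothesis, and of a family of edge-wise three-point updates, each of which can be shown to lie in $\B$ through \cref{lem:stabriemannflux}. The pseudo-equilibrium state of \cref{lem:bar} is the device that turns the single cell update \cref{eqn:meaneqngen} into a sum of such three-point updates.

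First I would inject the flux balance \cref{eqn:defubar} into the scheme. Since $\sum_k s_k^\kappa{\bf h}_{\lambda_\kappa^\star}(\uu^\star_\kappa,\uu_h^-({\bf x}_k^\kappa,t^{(n)}),\n_k^\kappa)=0$, adding $\Delta t^{(n)}$ times this vanishing sum to \cref{eqn:meaneqngen} and replacing $\langle\uu_h^{(n)}\rangle_\kappa$ by the quadrature \cref{eqn:surfquad} yields
\begin{equation*}
\langle\uu_h^{(n+1)}\rangle_\kappa = \sum_{i=1}^{N_v}\nu_i^\kappa\,\uu_h({\bf y}_i^\kappa,t^{(n)}) + \sum_{k=1}^{N_f}\beta_k^\kappa\bigg[\uu_h^-({\bf x}_k^\kappa,t^{(n)}) - \tfrac{\Delta t^{(n)}s_k^\kappa}{\beta_k^\kappa}\Big({\bf h}(\uu_h^-,\uu_h^+,\n_k^\kappa) - {\bf h}_{\lambda_\kappa^\star}(\uu^\star_\kappa,\uu_h^-,\n_k^\kappa)\Big)\bigg],
\end{equation*}
where I factored $\beta_k^\kappa$ out of each face contribution (legitimate since $\beta_k^\kappa>0$) so that the effective local ratio becomes $\tfrac{\Delta t^{(n)}s_k^\kappa}{\beta_k^\kappa}$.

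Next I would recognize each bracketed term as the left-hand side of \cref{eqn:stab2flux} with $\uu=\uu_h^-({\bf x}_k^\kappa,t^{(n)})$, $\uu_R=\uu_h^+({\bf x}_k^\kappa,t^{(n)})$, $\uu_L=\uu^\star_\kappa$, the scheme flux ${\bf h}$ in the role of ${\bf h}_{\ww^a}$ and the Rusanov flux ${\bf h}_{\lambda^\star_\kappa}$ in the role of ${\bf h}_{{\mathcal{W}}^b}$, and $\tfrac{\Delta t}{h}$ replaced by $\tfrac{\Delta t^{(n)}s_k^\kappa}{\beta_k^\kappa}$. Both fluxes are IDP for $\B$ (the former by assumption, the latter because it derives from an IDP ARS as recalled after \cref{eq:Rusanov_flux}), and the three states satisfy $\uu_h^-,\uu_h^+\in\B$ (hypothesis) and $\uu^\star_\kappa\in\B$ (\cref{lem:bar}). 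Hence \cref{lem:stabriemannflux} applies as soon as the half CFL $\tfrac{\Delta t^{(n)}s_k^\kappa}{\beta_k^\kappa}\max(|\lambda|(\uu^\star_\kappa,\uu_h^-,\n_k^\kappa),|\lambda|(\uu_h^-,\uu_h^+,\n_k^\kappa))\le\tfrac12$ holds. Using \cref{eqn:deflambar}, namely $\lambda^\star_\kappa\ge|\lambda|(\uu^\star_\kappa,\uu_h^-({\bf x}_k^\kappa),\n_k^\kappa)$, this maximum is dominated by $\max(\lambda^\star_\kappa,|\lambda|(\uu_h^-,\uu_h^+,\n_k^\kappa))$, so the condition follows from \cref{eqn:CFL}; each bracketed term therefore lies in $\B$.

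Finally I would conclude by convexity: $\langle\uu_h^{(n+1)}\rangle_\kappa$ is a combination of points of $\B$ with nonnegative weights $\nu_i^\kappa$ and positive weights $\beta_k^\kappa$ summing to $1$ by \cref{eqn:surfquadsum}, hence it belongs to the convex set $\B$. The only genuinely delicate point is the first step: correctly inserting the pseudo-equilibrium balance and factoring the $\beta_k^\kappa$ so that each face contribution acquires exactly the two-flux three-point structure of \cref{lem:stabriemannflux} with the sharp ratio $\tfrac{\Delta t^{(n)}s_k^\kappa}{\beta_k^\kappa}$. Everything after that is a direct invocation of the already-established interface lemmas together with the bound \cref{eqn:deflambar} and the CFL condition \cref{eqn:CFL}.
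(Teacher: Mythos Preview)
Your proof is correct and follows essentially the same route as the paper: inject the pseudo-equilibrium flux balance \cref{eqn:defubar} into \cref{eqn:meaneqngen}, expand $\langle\uu_h^{(n)}\rangle_\kappa$ via \cref{eqn:surfquad}, factor out the $\beta_k^\kappa$ to recognize each face term as a three-point update, apply \cref{lem:stabriemannflux} (using \cref{eqn:deflambar} to control the inner wave speed), and conclude by convexity through \cref{eqn:surfquadsum}. The paper's proof is identical in structure and only differs in that it names the bracketed updates ${\cal U}_k^{\kappa,n}$ explicitly.
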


\begin{proof}
Once again we remove the time dependance of $\uu_h$ for the sake of clarity, except when explicitly needed in the evaluation of $\langle\uu_h^{(n+1)}\rangle_\kappa$. Using \cref{lem:bar} we add the trivial quantity $\Delta t^{(n)}\times$ \cref{eqn:defubar} to \cref{eqn:meaneqngen} and use \cref{eqn:surfquad} to get 
 \begin{align}
 \langle\uu_h^{(n+1)}\rangle_\kappa =& \langle\uu_h^{(n)}\rangle_\kappa - \Delta t^{(n)}\sum_{k=1}^{N_f}s_k^\kappa \big( {\bf h}(\uu_h^-({\bf x}_k^\kappa),\uu_h^+({\bf x}_k^\kappa),\n_k) - {\bf h}_{ \lambda^\star_\kappa}(\uu^\star_\kappa,\uu_h^-({\bf x}_k^\kappa),\n_k^\kappa) \big) \nonumber\\
 =& \sum_{i=1}^{N_v} \nu_i \uu_h({\bf y}_i^\kappa) + \sum_{k=1}^{N_f} \Big(\beta_k^\kappa \uu_h^-({\bf x}_k^\kappa) \nonumber\\
 &- \Delta t^{(n)} s_k^\kappa \big( {\bf h}(\uu_h^-({\bf x}_k^\kappa),\uu_h^+({\bf x}_k^\kappa),\n_k) - {\bf h}_{ \lambda^\star_\kappa}(\uu^\star_\kappa,\uu_h^-({\bf x}_k^\kappa),\n_k^\kappa )  \big) \Big) \nonumber\\
=&\sum_{i=1}^{N_v} \nu_i \uu_h({\bf y}_i^\kappa) + \sum_{k=1}^{N_f} \beta_k^\kappa {\cal U}_k^{\kappa,n}, \label{eq:convex_comb_un+1}
\end{align}
 
\noindent where, from \cref{lem:stabriemannflux} and the condition \cref{eqn:CFL}, the updates 
\begin{equation}\label{eq:def_alpha_k}
    {\cal U}_k^{\kappa,n} := \uu_h^-({\bf x}_k^\kappa) - \frac{\Delta t^{(n)}s_k^\kappa}{\beta_k^\kappa} \Big( {\bf h}\big(\uu_h^-({\bf x}_k^\kappa),\uu_h^+({\bf x}_k^\kappa),\n_k^\kappa\big) - {\bf h}_{\lambda^\star_\kappa}\big(\uu^\star_\kappa,\uu_h^-({\bf x}_k^\kappa),\n_k^\kappa\big)  \Big),
\end{equation} 

\noindent are in $\B$. Then by \cref{eqn:surfquadsum} $\langle\uu_h^{(n+1)}\rangle_\kappa$ is a convex combination of quantities in $\B$, which concludes the proof.
\end{proof}

\begin{remark}
Since the set of states $\Omega^a$ is in general a convex invariant domain, \cref{thm:CFLHO} can then be applied to $\B=\Omega^a$ to ensure robustness of the scheme.
\end{remark}

\begin{remark}
In the case where we are using the quadrature on the volume defined by \cref{lem:existquad}, from \cref{eqn:alphamodal,eqn:defalpha} and the definition $\varepsilon_\kappa=\min_{\{i: \alpha_i^\kappa>0\}}(\frac{\varpi_i^\kappa}{\alpha_i^\kappa})$, we have
\begin{equation}\label{eq:def_s_over_beta}
 \frac{s_k^\kappa}{\beta_k^\kappa}=\frac{1}{\varepsilon_\kappa}=\max_{\{i: \alpha_i^\kappa>0\}}(\frac{\alpha_i^\kappa}{\varpi_i^\kappa})=\max_{1\leq i\leq N_v}\sum_{j=1}^{N_p} \sum_{l=1}^{N_{f}} s_l^\kappa \phi_j({\bf x}_l^\kappa) \phi_j ({\bf y}_i^\kappa) \quad \forall 1\leq k \leq N_f,
\end{equation}

\noindent and the CFL condition \cref{eqn:CFL} now reads
\begin{equation}\label{eqn:CFLmodal}
  \Delta t^{(n)} \max\limits_{\kappa\in\T_h} \frac{1}{\varepsilon_\kappa} \max\Big(\lambda^\star_\kappa, 
	|\lambda|\big(\uu_h^-({\bf x}_k^\kappa,t^{(n)}),\uu_h^+({\bf x}_k^\kappa,t^{(n)}),\n_k^\kappa\big)\Big)\leq \frac{1}{2}.
\end{equation}

\end{remark}

%
\subsection{Limiting strategy}\label{sec:convex_limiter}

Convex limiting enforces the numerical solution to preserve invariant domains \cite{Guermond_etal_IDP_conv_lim_19} through quasiconcave constraints \cite{avriel1972r}. Let recall that a function $\psi : \B \rightarrow \R $ is quasiconcave iff. for every family of convex coefficients $(\lambda_i)\geq0$, with $\sum \lambda_i=1$, we have $\psi(\sum_{i} \lambda_i \uu_i) \geq \min_{i}\psi(\uu_i)$ for all $\uu_i$ in $\B$. From \cref{thm:CFLHO} we see that for any quasiconcave function $\psi$ we have 
\begin{equation}
\label{eqn:qcbound}
\psi(\langle \uu_h^{(n+1)} \rangle_{\kappa})\geq m_\kappa^{\psi}:= \min\Big(\psi\big(\uu_h({\bf y}_{i}^\kappa,t^{(n)})\big)_{1\leq i\leq N_v}, \psi\big({\cal U}_k^{\kappa,n}\big)_{1\leq k\leq N_f}\Big),
\end{equation}

\noindent where the updates ${\cal U}_k^{\kappa,n}$ are defined in \cref{eq:def_alpha_k}. We now limit the solution around its cell-average $\langle \uu_h^{(n+1)} \rangle_{\kappa}$ so that it satisfies the same bounds and we rely on scaling limiters introduced in \cite{zhang2010positivity,zhang_shu_10a} to enforce the bounds from quasiconcave functions to points where $\uu_h$ needs to be evaluated. The limited solution is thus defined as 
\begin{equation}
\label{eqn:IRPlim}
    \tilde{\uu}_{h}^{(n+1)} \equiv (1-\theta_\kappa)\uu_{h}^{(n+1)}+\theta_\kappa\langle \uu_h^{(n+1)} \rangle_{\kappa},
\end{equation}
\noindent where
\begin{equation*}
    \theta_\kappa = \!\! \min\limits_{{\bf z}\in ({\bf y}_{1\leq i\leq N_v}^\kappa) \cup ({\bf x}_{1\leq i\leq N_f}^\kappa)} \!\! \max\big\{0\leq t\leq 1: \psi\big((1-t)\uu_{h}({\bf z},t^{(n+1)})+t\langle \uu_h^{(n+1)}\rangle_{\kappa}\big)\geq m_{\kappa}^\psi\big\}.
\end{equation*}

This strategy may be applied to a finite family $(\psi_i)_{1\leq i\leq n_c}$ of $n_c$ quasiconcave functions by using the minimum value $\theta_\kappa=\min\{\theta_\kappa(\psi_i): 1\leq i\leq n_c\}$. The limiter \cref{eqn:IRPlim} is then applied locally to each cell $\kappa$ and preserves high-order accuracy of the scheme in smooth domains \cite{zhang2010positivity}. The cell-average is not modified, $\langle\tilde\uu_h^{(n+1)}\rangle_{\kappa}=\langle \uu_h^{(n+1)} \rangle_{\kappa}$, and cellwise discrete conservation \cref{eqn:meaneqngen} still holds.

\subsection{Practical evaluation of \texorpdfstring{$\uu^\star_\kappa$}{u*}}\label{sec:evaluation_ustar}
The evaluation of $m_\kappa^\psi$ in \cref{eqn:qcbound} requires to first evaluate both $\lambda^\star_\kappa$ and $\uu^\star_\kappa$ as limits of the sequences in \cref{eqn:sequences} which might be cumbersome. Here we propose another strategy where we only need to check the convergence of $(\lambda_p)$ to ensure that $(\uu^\star_p)$ has also converged to $\uu^\star_\kappa$ which is in $\B$. Let introduce the sequences
\begin{equation}\label{eqn:modifseq}
\left\{ \!\!\!
\begin{array}{l}
\lambda_0 = \frac{1}{\vartheta}\max\limits_{1\leq k\leq N_f} \! |\lambda|\big(\uu_h^-({\bf x}_k^\kappa,t^{(n)}) ,\uu_h^+({\bf x}_k^\kappa,t^{(n)}),\n_k^\kappa\big),
\uu^\star_0=\!\sum\limits_{k=1}^{N_f} \Tilde{\gamma}_k^\kappa \uu_h^-({\bf x}_k^\kappa,t^{(n)}), \\
\lambda_{p+1}=\max\Big(\lambda_p, \frac{1}{\vartheta}\max\limits_{1\leq k\leq N_f}\big( |\lambda|( \uu^\star_p , \uu_h^-({\bf x}_k^\kappa,t^{(n)}),\n_k^\kappa)\big)\Big), \quad p\geq0, \\
\uu^\star_{p+1}=\sum\limits_{k=1}^{N_f} \Tilde{\gamma}_k^\kappa \Big(\uu_h^-({\bf x}_k^\kappa,t^{(n)})-\frac{{\bf f}\big(\uu_h^-({\bf x}_k^\kappa,t^{(n)})\big)\cdot \n_k^\kappa}{\lambda_{p+1}} \Big), \quad p\geq 0,
\end{array}
\right.
\end{equation}

\noindent where 
\begin{align*}
  \vartheta &=\max\Big\{0\leq t\leq1: \uu^\star_0 - \frac{t}{\tilde\lambda_1}\sum\limits_{k=1}^{N_f}\Tilde{\gamma}_k^\kappa{\bf f}\big(\uu_h^-({\bf x}_k^\kappa,t^{(n)})\big)\cdot \n_k^\kappa\in\B\Big\}, \\
 \tilde\lambda_1 &= \max\limits_{1\leq k\leq N_f}\Big( |\lambda|\big(\uu_h^-({\bf x}_k^\kappa,t^{(n)}) ,\uu_h^+({\bf x}_k^\kappa,t^{(n)}),\n_k^\kappa\big), |\lambda|\big( \uu^\star_0 , \uu_h^-({\bf x}_k^\kappa,t^{(n)}),\n_k^\kappa\big)\Big).
\end{align*}

Since $\uu^\star_0 \in \B$, we have $0<\vartheta\leq1$ and then $\uu^\star_1 \in \B$ by definition of $\vartheta$. Now $\lambda_p$ is increasing and $\uu^\star_{p+1}\in[\uu^\star_0, \uu^\star_p]\subset[\uu^\star_0, \uu^\star_1]$ for all $p\geq 1$ which is enough to prove convergence of both sequences and ensure that $\uu^\star_p \in \B$ and since $[\uu^\star_0, \uu^\star_1]$ is closed the limit is also in $\B$ (no need to add the distance term as in \cref{eqn:sequences}). Obviously, the limits satisfy \cref{eqn:defubar}, \cref{eqn:deflambar} and \cref{eqn:expustar1}. But this time, if $\lambda_{p+1}=\lambda_p$ for $p\geq1$, then $\uu^\star_{p+1}=\uu^\star_{p}$ and from this point both sequences are stationary. Now, the evaluation of $\uu^\star_p$ is really fast and we only need to evaluate $\max_{1\leq k\leq N_f}( |\lambda|( \uu^\star_p , \uu_h^-({\bf x}_k^\kappa,t^{(n)}),\n_k^\kappa))$.

It is possible to use a local $\lambda^{\star k}_\kappa$ at each vertex ${\bf x}_\kappa^k$ in $\partial\kappa$ in the above algorithm to lower the artificial dissipation of the Rusanov flux \cref{eq:Rusanov_flux} and thus avoid the induced restriction on the time step as well as to reduce over-diffusion of the updates ${\cal U}_k^{\kappa,n}$ in \cref{eq:def_alpha_k}. We thus look for $\lambda^{\star k}_\kappa$, $1\leq k\leq N_f$, and $\uu^\star_\kappa$ satisfying 
\begin{equation}
 \label{eqn:locubar}
    \sum_{k=1}^{N_f} s_k^\kappa{\bf h}_{\lambda^{\star k}_\kappa}\big(\uu^\star_\kappa,\uu_h^-({\bf x}_k^\kappa,t^{(n)}),\n_k^\kappa\big)=0, 
\end{equation}

\noindent and
\begin{equation}
\label{eqn:loclambar}
    \lambda^{\star k}_\kappa \geq |\lambda|\big( \uu^\star_\kappa, \uu_h^-({\bf x}_k^\kappa,t^{(n)}),\n_k^\kappa\big).
 \end{equation}
 
We therefore introduce new sequences as
\begin{equation}\label{eqn:modifseq_loc}
\left\{  \begin{array}{ll}
\lambda_0^k = \frac{1}{\vartheta}|\lambda|\big( \uu_h^-({\bf x}_k^\kappa,t^{(n)}),\uu_h^+({\bf x}_k^\kappa,t^{(n)}),\n_k^\kappa\big),\;
\uu^\star_0=\sum\limits_{k=1}^{N_f} \Tilde{\gamma}_k^\kappa \uu_h^-({\bf x}_k^\kappa,t^{(n)}),  \\
\lambda_{p+1}^k=\max\Big(\lambda_p^k, \frac{1}{\vartheta}|\lambda|\big( \uu^\star_p, \uu_h^-({\bf x}_k^\kappa,t^{(n)}),\n_k^\kappa\big)\Big), \; p\geq 0,\\
\uu^\star_{p+1}=\sum\limits_{k=1}^{N_f} \frac{\Tilde{\gamma}_k^\kappa \lambda_{p+1}^k}{\sum_{i=1}^{N_f} \Tilde{\gamma}_i \lambda_{p+1}^i} \Big( \uu_h^-({\bf x}_k^\kappa,t^{(n)})-\frac{{\bf f}\big(\uu_h^-({\bf x}_k^\kappa,t^{(n)})\big)\cdot \n_k^\kappa}{\lambda_{p+1}^k} \Big), \quad p\geq0,
\end{array}
\right. 
\end{equation}

\noindent where the index $k$ ranges from $1$ to $N_f$ and
 \begin{align*}
 \vartheta &= \max\Big\{0\leq t\leq1: \sum\limits_{k=1}^{N_f} \frac{\Tilde{\gamma}_k^\kappa \lambda_{1}^k}{\sum_{i=1}^{N_f} \Tilde{\gamma}_i \lambda_{1}^i} \Big( \uu_h^-({\bf x}_k^\kappa,t^{(n)}) - t\frac{{\bf f}\big(\uu_h^-({\bf x}_k^\kappa,t^{(n)})\big)\cdot \n_k^\kappa}{\tilde\lambda_{1}^k} \Big) \in\B\Big\}, \\
 \tilde\lambda_{1}^k &= \max\Big( |\lambda|\big(\uu_h^-({\bf x}_k^\kappa,t^{(n)}) ,\uu_h^+({\bf x}_k^\kappa,t^{(n)}),\n_k^\kappa\big), |\lambda|\big( \uu^\star_0 , \uu_h^-({\bf x}_k^\kappa,t^{(n)}),\n_k^\kappa\big)\Big).
\end{align*}

Now all the sequences $(\lambda^k_p)_p$ are non-decreasing and will converge. Also for $p \geq 1$, we compute 
\begin{equation*}
    \Big(\sum_{i=1}^{N_f}\Tilde{\gamma}_i \lambda^i_{p+1} \Big) \uu^\star_{p+1}-\Big(\sum_{i=1}^{N_f}\Tilde{\gamma}_i \lambda^i_{p}\Big)\uu^\star_p = \sum_{k=1}^{N_f} \Tilde{\gamma}_k^\kappa (\lambda_{p+1}^k-\lambda_{p}^k) \uu_h^-({\bf x}_k^\kappa,t^{(n)}),
\end{equation*}

\noindent so $\uu^\star_{p+1}$ can be recast as a convex combination
\begin{equation*}
\uu^\star_{p+1}=\frac{\sum_{i=1}^{N_f}\Tilde{\gamma}_i \lambda^i_{p}}{\sum_{i=1}^{N_f}\Tilde{\gamma}_i \lambda^i_{p+1}}\uu^\star_p+ \sum_{k=1}^{N_f} \frac{\Tilde{\gamma}_k^\kappa (\lambda^k_{p+1}-\lambda^k_p)}{\sum_{i=1}^{N_f}\Tilde{\gamma}_i \lambda^i_{p+1}} \uu_h^-({\bf x}_k^\kappa,t^{(n)}).
\end{equation*}

\noindent with $N_f+1$ positive weights such that 
\begin{equation*}
\frac{\sum_{i=1}^{N_f}\Tilde{\gamma}_i \lambda^i_{p}}{\sum_{i=1}^{N_f}\Tilde{\gamma}_i \lambda^i_{p+1}}+ \sum_{k=1}^{N_f} \frac{\Tilde{\gamma}_k^\kappa (\lambda^k_{p+1}-\lambda^k_p)}{\sum_{i=1}^{N_f}\Tilde{\gamma}_i \lambda^i_{p+1}}=1,
\end{equation*}

\noindent and by recurrence $\uu^\star_{p+1}$ is also a convex combination of $\uu^\star_1$ and the $\uu_h^-({\bf x}_k^\kappa,t^{(n)})$. Therefore, if $\uu^\star_1$ is in $B$, then $\uu^\star_p$ is also in $\B$ for all $p \geq 1$ and stays in a compact subset of $\B$ which ensures that the $\lambda^k_p$ are bounded. So they converge to some finite $\lambda^{\star k}_\kappa$ and $\uu^\star_p$ converges to
\begin{equation}
\label{eqn:defustarloc}
\uu^\star_\kappa=\sum_{k=1}^{N_f} \frac{\Tilde{\gamma}_k^\kappa \lambda^{\star k}_\kappa}{\sum_{i=1}^{N_f}\Tilde{\gamma}_i \lambda^{*i}_\kappa} \Big(\uu_h^-({\bf x}_k^\kappa,t^{(n)})-\frac{{\bf f}\big(\uu_h^-({\bf x}_k^\kappa,t^{(n)})\big)\cdot \n_k}{\lambda^{\star k}_\kappa}\Big) \in \B.
\end{equation}

By definition of the $\lambda_p^k$, \cref{eqn:loclambar} holds and with the definition of $\uu^\star_\kappa$, \cref{eqn:locubar} is also satisfied, while \cref{thm:CFLHO} still holds with $\lambda_\kappa^{\star k}$ instead of $\lambda_\kappa^\star$ in \cref{eqn:CFL}. We summarize these results in the following lemma.

\begin{lemma}
\label{lem:barloc}
Suppose that the numerical scheme satisfies \cref{eqn:meaneqngen}, \cref{eqn:surfquad} and \cref{eqn:closesurface}, let $\B$ be a invariant domain and suppose that for all $1\leq i\leq N_f$, $\uu_h^\pm(x_i^\kappa,t^{(n)})$ are in $\B$, then  $\uu^\star_\kappa$ defined by \cref{eqn:defustarloc} is in $\B$, satisfies \cref{eqn:locubar}, and there exists a family of finite and positive estimates $({\lambda}_\kappa^{\star k})_{1\leq k\leq N_f}$ satisfying \cref{eqn:loclambar}.
\end{lemma}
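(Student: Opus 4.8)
This statement is the local-wave-speed counterpart of \cref{lem:bar}, and the plan is to turn the discussion preceding it into a proof by showing that the sequences in \cref{eqn:modifseq_loc} converge, this time \emph{without} the distance-to-boundary penalty $1/d(\cdot,\partial\B)$ used in \cref{eqn:sequences}. First I would note that $\uu^\star_0=\sum_{k}\Tilde{\gamma}_k^\kappa\uu_h^-({\bf x}_k^\kappa,t^{(n)})$ is a convex combination of internal traces, hence lies in $\B$ by \cref{eqn:sumtildgamma} and convexity. Consequently $t=0$ is feasible in the definition of $\vartheta$, so $\vartheta$ is well defined with $0<\vartheta\leq1$, and maximality of $\vartheta$ together with the formula for $\uu^\star_1$ forces $\uu^\star_1\in\B$.

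The core of the argument is the convex-combination identity already indicated in the text. Writing $S_p:=\sum_{i}\Tilde{\gamma}_i\lambda_p^i$, I would multiply the recursion for $\uu^\star_{p+1}$ by $S_{p+1}$ and that for $\uu^\star_p$ by $S_p$; the flux sums $\sum_k\Tilde{\gamma}_k^\kappa{\bf f}(\uu_h^-({\bf x}_k^\kappa,t^{(n)}))\cdot\n_k^\kappa$ coincide in both and cancel on subtraction, leaving
\[
 S_{p+1}\uu^\star_{p+1}-S_p\uu^\star_p=\sum_{k=1}^{N_f}\Tilde{\gamma}_k^\kappa\big(\lambda_{p+1}^k-\lambda_p^k\big)\uu_h^-({\bf x}_k^\kappa,t^{(n)}).
\]
Because each $(\lambda_p^k)_p$ is nondecreasing and the $\Tilde{\gamma}_k^\kappa$ are positive, solving for $\uu^\star_{p+1}$ exhibits it as a convex combination of $\uu^\star_p$ and the traces, the weights $S_p/S_{p+1}$ and $\Tilde{\gamma}_k^\kappa(\lambda_{p+1}^k-\lambda_p^k)/S_{p+1}$ being nonnegative and summing to one. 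Iterating from $p=1$ shows that every $\uu^\star_p$ lies in the fixed compact convex hull $K$ of $\uu^\star_1$ and the traces $\uu_h^-({\bf x}_k^\kappa,t^{(n)})$, which is contained in $\B$; this trapping is precisely what replaces the boundary-penalty term of the global construction.

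With the iterates confined to $K$, the remaining steps are short. Lipschitz continuity of $\uu\mapsto|\lambda|(\uu,\uu_h^-({\bf x}_k^\kappa,t^{(n)}),\n_k^\kappa)$ bounds the wave speeds on the compact set $K$, so each nondecreasing sequence $(\lambda_p^k)_p$ converges to a finite limit $\lambda^{\star k}_\kappa$, positive since $\lambda_0^k>0$. Passing to the limit in the recursion identifies the limit of $\uu^\star_p$ with the expression \cref{eqn:defustarloc}, which lies in $\B$ because $K$ is closed. Letting $p\to\infty$ in $\lambda_{p+1}^k\geq\tfrac1\vartheta|\lambda|(\uu^\star_p,\uu_h^-({\bf x}_k^\kappa,t^{(n)}),\n_k^\kappa)\geq|\lambda|(\uu^\star_p,\uu_h^-({\bf x}_k^\kappa,t^{(n)}),\n_k^\kappa)$, valid since $\vartheta\leq1$, yields \cref{eqn:loclambar} by continuity. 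Finally \cref{eqn:locubar} follows by the same algebra as in \cref{lem:bar}: expanding the Rusanov fluxes, the term $\sum_k s_k^\kappa{\bf f}(\uu^\star_\kappa)\cdot\n_k^\kappa$ drops out by \cref{eqn:closesurface}, and the surviving flux sum equals the dissipation sum $\sum_k s_k^\kappa\lambda^{\star k}_\kappa(\uu_h^-({\bf x}_k^\kappa,t^{(n)})-\uu^\star_\kappa)$ exactly by the definition \cref{eqn:defustarloc} of $\uu^\star_\kappa$.

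The main obstacle is the second step: establishing the convex-combination decomposition and hence the confinement of all iterates to a single compact subset of $\B$, since it is this that simultaneously delivers boundedness of the $\lambda_p^k$ and closedness for the limit, removing the need for the $1/d(\cdot,\partial\B)$ device of \cref{eqn:sequences}. A secondary point to verify carefully is positivity of the limits $\lambda^{\star k}_\kappa$, so that \cref{eqn:defustarloc} and the weights $\Tilde{\gamma}_k^\kappa\lambda^{\star k}_\kappa/S_p$ are well defined.
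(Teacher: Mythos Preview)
Your proposal is correct and follows essentially the same route as the paper: the paper's argument is precisely the discussion immediately preceding \cref{lem:barloc}, built around the same telescoping identity $S_{p+1}\uu^\star_{p+1}-S_p\uu^\star_p=\sum_k\Tilde\gamma_k^\kappa(\lambda_{p+1}^k-\lambda_p^k)\uu_h^-({\bf x}_k^\kappa,t^{(n)})$, the resulting convex-combination representation of $\uu^\star_{p+1}$ in terms of $\uu^\star_1$ and the traces, and the consequent confinement to a fixed compact subset of $\B$ that bounds the $\lambda_p^k$. Your write-up is in fact a bit more explicit than the paper on two points it leaves implicit: that the compact confinement is what replaces the $1/d(\cdot,\partial\B)$ penalty of \cref{eqn:sequences}, and that \cref{eqn:loclambar} follows by continuity from the recursion inequality since $\vartheta\le1$.
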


%
%
\section{Examples of high-order spectral discontinuous methods}\label{sec:schemes}

We here review some high-order spectral discontinuous approximations of \cref{eqn:HCL} which satisfy the assumptions of discrete conservation \cref{eqn:meaneqngen}, existence of quadrature rule \cref{eqn:surfquad} and preservation of uniform states \cref{eqn:closesurface}. As a consequence there exists pseudo-equilibrium states $\uu_\kappa^\star$ such that \cref{lem:bar}, \cref{lem:barloc}, and \cref{thm:CFLHO} hold, and the limiter \cref{eqn:IRPlim} can be applied. In the following,  we  consider  a  partition $\T_h$ of $D\subset\mathbb{R}^d$, composed  of non-overlapping and non-empty elements $\kappa$, and by ${\cal F}_h$ we denote the set of faces in the partition. The approximate solution is sought under the form
\begin{equation}\label{eq:DG_num_sol}
 {\bf u}_h({\bf x},t) = \sum_{k=1}^{N_p} \phi^\kappa_{k}({\bf x}){\bf U}^\kappa_{k}(t) \quad \forall{\bf x}\in\kappa,\, \kappa\in \T_h,\, \forall t\geq0,
\end{equation}

\noindent where the basis functions $\phi^\kappa_{k}$ span the function space ${\cal V}_h^p(\kappa)$ restricted onto $\kappa$ and the discrete scheme may be written as 
\begin{equation}\label{eqn:fully-discr_DG}
 M_k^\kappa \frac{{\bf U}_{k,n+1}^\kappa-{\bf U}_{k,n}^\kappa}{\Delta t^{(n)}} + {\bf R}_{k}^\kappa({\bf u}_h^{(n)}) = 0 \quad \forall \kappa\in \T_h, \; 1\leq k\leq N_p, \; n\geq 0,
\end{equation}

\noindent where ${\bf U}_{k,n}^\kappa={\bf U}_{k}^\kappa(t^{(n)})$ and the $M_k^\kappa$ are the entries of the mass matrix.

\subsection{Discontinuous Galerkin Method}\label{sec:modalDG}
The first numerical scheme we describe here is the discontinuous Galerkin method with modal basis \cite{DG,bassi1997high,cockburn-shu89}. We look for approximate solutions in the function space of discontinuous polynomials
\begin{equation*}
 {\cal V}_h^p=\bigoplus\limits_{\kappa\in\T_h}{\cal V}_h^p(\kappa)=\{\phi\in L^2(D):\;\phi|_{\kappa}\circ{\bf x}_\kappa\in\Po^p(\hat{K})\; \forall\kappa\in \T_h\},
\end{equation*}

\noindent where $\Po^{p}(\hat{K})$ is a polynomial space over a reference element $\hat{K}$, Each physical element $\kappa$ is the image of $\hat{K}$ through the mapping ${\bf x}={\bf x}_\kappa(\bxi)$ with $\bxi=(\xi_1,\dots,\xi_d)$. Likewise, each face $f$ in ${\cal F}_h$ is the image of a reference face $\hat{F}$ through the mapping ${\bf x}={\bf x}_f(\xi_1,\dots,\xi_{d-1})$. We suppose that we have a quadrature $(\bxi_i^V, \omega_i^V)_{1\leq i\leq N_v}$ on $\hat{K}$ and denote ${\bf y}_i^{\kappa}={\bf x}_\kappa(\bxi_i)$  (see \cref{fig:stencil_2D}). Similarly, we consider a quadrature $(\bxi_k^f,\omega_k^f)_{1\leq k \leq n_f}$ on $\hat{F}$ and denote the faces of $\kappa$ $(f_\kappa^j)_{1\leq j \leq N_\kappa}$ where $N_\kappa$ is the number of faces of $\kappa$ and the $f_\kappa^j \in {\cal F}$ are distinct. Then define the ${\bf x}_i^\kappa$ in \cref{eqn:meaneqngen} by ${\bf x}_{(j-1)n_f+k}^\kappa={\bf x}_{f^j_\kappa}(\bxi_k^{f^j_\kappa})$ for $1 \leq j \leq N_\kappa$ and $1\leq k \leq n_f$ and $N_f=N_\kappa \times n_f$. We further define Jacobians of the transformations by $J_\kappa({\bf x})=|{\bf x}_\kappa'(\bxi)|$ and $J_f({\bf x})=|{\bf x}_f'(\xi_1,\dots,\xi_{d-1})|$.

The DG method consists in defining a discrete weak formulation of $\cref{eqn:HCL}$ by multiplying it with test functions $\phi_k^\kappa$ spanning ${\cal V}_h^p$ and integrating over $\kappa$, using integration by parts and approximating ${\bf f}(\uu_h)\cdot\n$ by two-point numerical fluxes and the integrals by the quadrature rules. The space discretization in \cref{eqn:fully-discr_DG} reads
\begin{multline}\label{eqn:weakDG}
 {\bf R}_{k}^\kappa({\bf u}_h) = 
    - \sum_{i=1}^{N_v} \omega_i^V J_\kappa({\bf y}_i^\kappa) {\bf f}\big(\uu_h({\bf y}_i^\kappa,t^{(n)})\big)\cdot \nabla \phi_k^\kappa({\bf y}_i^\kappa) \\ + \sum_{i=1}^{N_f} \omega_i^f J_f({\bf x}_i^\kappa) {\bf h}\big(\uu_h^-({\bf x}_i^\kappa,t^{(n)}),\uu_h^+({\bf x}_i^\kappa,t^{(n)}),\n_i^\kappa\big) \phi_k^\kappa({\bf x}_i^\kappa).
\end{multline}

We use an orthonormal basis such that $M_\kappa^k=|\kappa|:=\sum_{i=1}^{N_v} \omega_i^V J_\kappa({\bf y}_i^\kappa)$ and further seting $\phi_k^\kappa=1_\kappa$ the indicator function of $\kappa$, the first sum vanishes and we obtain \cref{eqn:meaneqngen} with $s_k^\kappa=\frac{\omega_k^f J_f({\bf x}_k^\kappa)}{|\kappa|}$ and $\beta_k^\kappa=\varepsilon_\kappa s_k^\kappa$ in \cref{eqn:surfquad} where $\varepsilon_\kappa$ is evaluated from \cref{eq:def_s_over_beta}. Then by \cref{eqn:CFLmodal}, the scheme \cref{eqn:fully-discr_DG,eqn:weakDG} is IDP under the condition 
\begin{equation}\label{eq:CFL_modalDG}
     \Delta t^{(n)} \max_{\kappa \in \T_h} \frac{1}{\varepsilon_\kappa} \max_{1\leq k\leq N_f} \max\Big(\lambda^\star_\kappa, |\lambda|\big(\uu_h^-({\bf x}_k^\kappa,t^{(n)}),\uu_h^+({\bf x}_k^\kappa,t^{(n)})\big)\Big)\leq \frac{1}{2}.
\end{equation}




%
\subsection{Discontinuous Galerkin Spectral Element Method}\label{sec:DGSEM}

In the DGSEM, the reference element is an hypercube : $\hat{K} = I^d := \{ \bxi =( \xi_1, \dots, \xi_d) : \; -1 \leq \xi_j \leq 1 \}$ and the polynomial space $\Po^{p}(I^d)$ is formed by tensor products of polynomials of degree at most $p$ in each direction. The approximate solution is sought under the form \cref{eq:DG_num_sol} where $({\bf U}^\kappa_k)_{1\leq k\leq N_p}$ are the $N_p=(p+1)^d$ DOFs in the element $\kappa$ with indexing
\begin{equation*}
 k=\ol{k}(i_1,\dots,i_d):=1+\sum_{j=1}^di_j(p+1)^{j-1} \quad 0\leq i_1,\dots,i_d \leq p.
\end{equation*}

We define a basis $(\phi^\kappa_k)_{1\leq k\leq N_p}$ of ${\cal V}_h^p(\kappa)$ by using tensor products: $\phi^\kappa_k({\bf x})=\phi^\kappa_k({\bf x}_\kappa(\bxi))=\Pi_{j=1}^d\ell_{i_j}(\xi_j)$, where $\ell_{0\leq i\leq p}$ denote the $i$th Lagrange interpolation polynomial associated to $\zeta_i$ the $i$th Gauss-Lobatto quadrature node with $\zeta_0=-1<\zeta_1<\dots<\zeta_p=1$ (see \cref{fig:stencil_2D_DGSEM}) and by $\omega_i$ we denote the associated weight. We therefore have the following cardinality relation at quadrature points $\bxi_{k'}=(\xi_{i'_1},\dots,\xi_{i'_d})$ in $\hat{K}$: $\phi^\kappa_k({\bf x}_{k'}^\kappa)=\phi^\kappa_k({\bf x}_\kappa(\bxi_{k'}))=\delta_{i_1,i'_1}\dots\delta_{i_d,i'_d}$ with $\delta_{i,i'}$ the Kronecker symbol, so the DOFs correspond to the point values of the solution: ${\bf U}_k^\kappa(t)={\bf u}_h({\bf y}_k^\kappa,t)$ and interpolation and quadrature points are collocated, hence $N_v=N_p$.


%
%

%
\begin{figure}[ht]
\begin{center}
\begin{tikzpicture}
[declare function={la(\x) = (\x+1/sqrt(5))/(-1+1/sqrt(5)) * (\x-1/sqrt(5))/(-1-1/sqrt(5)) * (\x-1)/(-2);
                   lb(\x) = (\x+1)/(-1/sqrt(5)+1) * (\x-1/sqrt(5))/(-2/sqrt(5)) * (\x-1)/(-1/sqrt(5)-1);
                   lc(\x) = (\x+1)/(1/sqrt(5)+1) * (\x+1/sqrt(5))/(2/sqrt(5)) * (\x-1)/(1/sqrt(5)-1);
                   ld(\x) = (\x+1)/(2) * (\x+1/sqrt(5))/(1+1/sqrt(5)) * (\x-1/sqrt(5))/(1-1/sqrt(5));
                   }]
\draw (1.2,1.625) node {$\kappa=\kappa^-$};
\draw (5.,1.65)   node {$\kappa^+$};
\draw (2.8,3.)    node[above] {$f$};
\draw (2.86,2.20) node[below left]  {${\bf u}_h^-$};
\draw (2.86,2.20) node[above right] {${\bf u}_h^+$};
\draw [>=stealth,->] (2.94,0.90) -- (3.9,0.95) ;
\draw (3.9,0.95) node[below right] {$\n_k^\kappa$};
\draw (2.94,0.90) node[above right] {${\bf x}_k^\kappa$};
\draw (0.21,2.59) node[above left] {${\bf y}_i^\kappa$};
\def\xad{-1.00};  \def\yad{3.40}; \def\xbd{0.05}; \def\ybd{3.49}; \def\xcd{1.75}; \def\ycd{3.11}; \def\xdd{2.80}; \def\ydd{3.00};
\def\xac{-0.723}; \def\yac{2.46}; \def\xbc{0.21}; \def\ybc{2.59}; \def\xcc{1.87}; \def\ycc{2.27}; \def\xdc{2.86}; \def\ydc{2.20};
\def\xab{-0.48};  \def\yab{0.94}; \def\xbb{0.45}; \def\ybb{1.03}; \def\xcb{2.05}; \def\ycb{0.91}; \def\xdb{2.94}; \def\ydb{0.90};
\def\xaa{0.00};   \def\yaa{0.00}; \def\xba{0.83}; \def\yba{-0.03}; \def\xca{2.17}; \def\yca{0.01}; \def\xda{3.00}; \def\yda{0.10};
\draw (\xad,\yad) node {$\bullet$}; \draw (\xbd,\ybd) node {$\bullet$}; \draw (\xcd,\ycd) node {$\bullet$}; \draw (\xdd,\ydd) node {$\bullet$};
\draw (\xac,\yac) node {$\bullet$}; \draw (\xbc,\ybc) node {$\bullet$}; \draw (\xcc,\ycc) node {$\bullet$}; \draw (\xdc,\ydc) node {$\bullet$};
\draw (\xab,\yab) node {$\bullet$}; \draw (\xbb,\ybb) node {$\bullet$}; \draw (\xcb,\ycb) node {$\bullet$}; \draw (\xdb,\ydb) node {$\bullet$};
\draw (\xaa,\yaa) node {$\bullet$}; \draw (\xba,\yba) node {$\bullet$}; \draw (\xca,\yca) node {$\bullet$}; \draw (\xda,\yda) node {$\bullet$};
\draw [domain=-1:1] plot ({la(\x)*\xaa+lb(\x)*\xba+lc(\x)*\xca+ld(\x)*\xda}, {la(\x)*\yaa+lb(\x)*\yba+lc(\x)*\yca+ld(\x)*\yda});
\draw [domain=-1:1] plot ({la(\x)*\xad+lb(\x)*\xbd+lc(\x)*\xcd+ld(\x)*\xdd}, {la(\x)*\yad+lb(\x)*\ybd+lc(\x)*\ycd+ld(\x)*\ydd});
\draw [domain=-1:1] plot ({la(\x)*\xda+lb(\x)*\xdb+lc(\x)*\xdc+ld(\x)*\xdd}, {la(\x)*\yda+lb(\x)*\ydb+lc(\x)*\ydc+ld(\x)*\ydd});
\draw [domain=-1:1] plot ({la(\x)*\xaa+lb(\x)*\xab+lc(\x)*\xac+ld(\x)*\xad}, {la(\x)*\yaa+lb(\x)*\yab+lc(\x)*\yac+ld(\x)*\yad});
%
\draw [>=stealth,-] (3.,0.1) -- (6.5,-0.5) ;
\draw [>=stealth,-] (6.5,-0.5) -- (7.5,4.) ;
\draw [>=stealth,-] (7.5,4.) -- (2.8,3.) ;
\end{tikzpicture}
\caption{Notations for the DGSEM and $d=2$: inner and outer elements, $\kappa^-$ and $\kappa^+$, for $d=2$; definitions of traces ${\bf u}_h^\pm$ on the interface $f$ and of the unit outward normal vector $\n_k^\kappa$. Element quadrature nodes ${\bf y}_i^\kappa$ and surface quadrature node ${\bf x}_k^\kappa$ that are also included in the ${\bf y}_i^\kappa$.}
\label{fig:stencil_2D_DGSEM}
\end{center}
\end{figure}
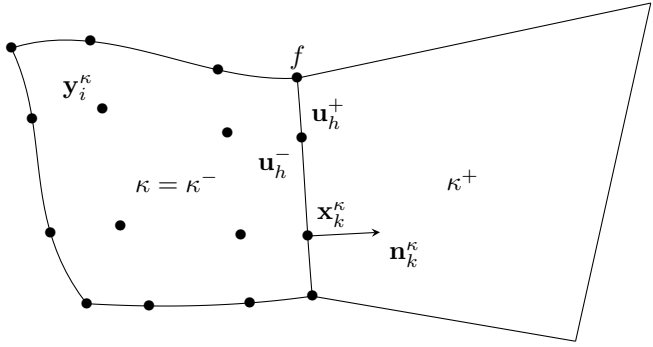

Let introduce the discrete derivative matrix with entries $D_{ij} = \ell_j'(\zeta_i)$ with $0\leq i,j \leq p$. The DGSEM discretization takes the form \cref{eqn:fully-discr_DG} with $M_k^\kappa=\omega_k^V J_\kappa({\bf k}_k^\kappa)$, $\omega_k^V=\Pi_{j=1}^d\omega_{i_j}$ for $k=\ol{k}(i_1,\dots,i_d)$ and 
\begin{align}
  {\bf R}_k^\kappa({\bf u}_h) &= 2 \omega_k^V \sum_{j=1}^d \sum_{l=0}^p D_{i_j l}{\bf h}_{sym}\big({\bf U}_k^\kappa,{\bf U}_{k'_j}^\kappa,\{J_\kappa\nabla\xi_j \}_{(k,k'_j)}\big) \nonumber\\
	&+ \sum_{i=1}^{N_f}\phi_k^\kappa({\bf x}_i^\kappa)\omega_i^fJ_f({\bf x}_i^\kappa) \Big( {\bf h}\big({\bf U}_{k,n}^\kappa,{\bf u}_h^+({\bf x}_i^\kappa,t),{\bf n}_i^\kappa\big)-{\bf f}\big({\bf U}_{k,n}^\kappa\big)\cdot{\bf n}_i^\kappa \Big)
 \label{eq:semi-discr_DGSEM-res}
\end{align}

\noindent where for $d=3$ $k'_1=\ol{k}(l,i_2,i_3)$, $k'_2=\ol{k}(i_1,l,i_3)$ and $k'_3=\ol{k}(i_1,i_2,l)$, hence $k'_j=k+(i'_j-i_j)(p+1)^{j-1}$, $N_f=2d(p+1)^{d-1}$, $\omega_i^f=\Pi_{j=1}^{d-1}\omega_{i_j}$, and 
\begin{equation*}\label{eq:def_Jkappa_mean}
\begin{array}{c}
 \{J_\kappa \nabla {\bf \xi} \}_{(k,k'_j)}=\frac{1}{2} \big(J_\kappa ( {\bf y}_k^\kappa) \nabla \xi_j (\bxi_k) + J_\kappa({\bf y}_{k'_j}^\kappa) \nabla \xi_j(\bxi_{k'_j})\big), 
 \end{array}
\end{equation*}

\noindent have been introduced to keep conservation of the scheme \cite{wintermeyer_etal_17}. By ${\bf h}_{sym}$ we denote a two-point flux supposed to be symmetric in the sense that ${\bf h}_{sym}({\bf u},{\bf v}, \n)={\bf h}_{sym}({\bf v},{\bf u}, \n)$. Note that $\phi_k^\kappa({\bf x}_i^\kappa)=1$ if ${\bf x}_i^\kappa={\bf y}_k^\kappa$ and $\phi_k^\kappa({\bf x}_i^\kappa)=0$ else.

The choice of the ${\bf y}_k^\kappa$ in the quadrature \cref{eqn:surfquad} is not unique and we here use the following decomposition
\begin{align*}
 \langle{\bf u}_h^{(n)}\rangle_\kappa &= \frac{1}{|\kappa|}\sum_{{\bf y}_i^\kappa\in\kappa} \omega_{i}^V J_\kappa ({\bf y}_i^\kappa){\bf U}_{i,n}^\kappa \\
 &= \frac{1}{|\kappa|}\sum_{{\bf y}_i^\kappa\in\text{int}(\kappa)} \omega_{i}^V J_\kappa ({\bf y}_i^\kappa){\bf U}_{i,n}^\kappa  + \frac{1}{|\kappa|}\sum_{i=1}^{N_f} \Tilde\omega_i^f J_\kappa({\bf x}_i^\kappa){\bf u}_h({\bf x}_i^\kappa,t^{(n)}),
\end{align*}

\noindent where $\text{int}(\kappa)$ denotes the interior of $\kappa$, while $\Tilde\omega_i^f J_\kappa({\bf x}_i^\kappa)=\frac{1}{d}\omega_{j}^V J_\kappa ({\bf y}_j^\kappa)$ if ${\bf y}_j^\kappa={\bf x}_i^\kappa$ is a vertex of the $d$-dimensional hexahedron, $\Tilde\omega_i^f J_\kappa({\bf x}_i^\kappa)=\frac{1}{d-1}\omega_{j}^V J_\kappa ({\bf y}_j^\kappa)$  if ${\bf y}_j^\kappa={\bf x}_i^\kappa$ is on some edge, and  $\Tilde\omega_i^f J_\kappa({\bf x}_i^\kappa)=\omega_{j}^V J_\kappa ({\bf y}_j^\kappa)$ else.

Summing \cref{eqn:fully-discr_DG} over $1\leq k\leq N_v$ gives for the cell-averaged solution
\begin{align*}
 \langle{\bf u}_h^{(n+1)}\rangle_\kappa &= \langle{\bf u}_h^{(n)}\rangle_\kappa -\frac{\Delta t^{(n)}}{|\kappa|}\sum_{k=1}^{N_v}{\bf R}^\kappa_{k}({\bf u}_h^{(n)}) \\
 &= \langle{\bf u}_h^{(n)}\rangle_\kappa - \frac{\Delta t^{(n)}}{|\kappa|}\sum_{i=1}^{N_f}\omega_i^fJ_f({\bf x}_i^\kappa){\bf h}\big({\bf u}_h^-({\bf x}_i^\kappa,t^{(n)}),{\bf u}_h^+({\bf x}_i^\kappa,t^{(n)}),\n_i^\kappa\big), 
\end{align*}

\noindent by conservation of the DGSEM \cite{winters_etal_16,wintermeyer_etal_17} and providing that the so-called metric identities are satisfied at the discrete level \cite{kopriva_metric_id_06}. This relation can be identified with \cref{eqn:meaneqngen} with $s_k^\kappa=\frac{\omega_k^fJ_f({\bf x}_k^\kappa)}{|\kappa|}$. Then we can apply \cref{thm:CFLHO} with $\beta^\kappa_k=\frac{\Tilde\omega_k^f J_\kappa({\bf x}_k^\kappa)}{|\kappa|}$ and the DGSEM scheme \cref{eqn:fully-discr_DG} is IDP under the condition 
\begin{equation}\label{eq:CFL_DGSEM}
     \Delta t^{(n)} \max\limits_{\kappa\in\Omega_h} \max\limits_{1\leq k\leq N_f} \frac{\omega_k^f J_f({\bf x}_k^\kappa)}{\Tilde{\omega}_k^f J_\kappa({\bf x}_k^\kappa)} \max\Big(\lambda^\star_\kappa, |\lambda|\big(\uu_h^-({\bf x}_k^\kappa,t^{(n)}),\uu_h^+({\bf x}_k^\kappa,t^{(n)})\big)\Big)\leq \frac{1}{2}.
\end{equation}

\subsection{Other methods}

Properties \cref{eqn:meaneqngen,eqn:surfquad,eqn:closesurface} also hold for other discretely conservative spectral difference methods on general curved elements provided the discretization operators satisfy the metric identities at the discrete level, which imposes some limits on the order of approximation of the mesh elements compared to the approximation order of the solution \cite{kopriva_metric_id_06}. The limiter \cref{eqn:IRPlim} may hence be applied to make these methods invariant domain preserving. We list some examples below.

The skew-symmetric entropy stable modal discontinuous Galerkin methods \cite{chan_skewDG_2019} uses skew-hybrized summation-by-parts (SBP) operators allowing conservation and free-stream preservation under the standard accuracy requirements of volume and surface quadratures. In \cite{Crean_etal_SBP_curved_2018} multidimensional discretization schemes based on SBP operators on general curved elements generalize the staggered finite differences from \cite{fisher_carpenter_13}. The discretizations with curved elements remain accurate, conservative, and entropy stable. Spectral differences \cite{Liu_etal_SDM_06} and staggered Chebyshev \cite{KoprivaKolias_stagg_96} methods on curved elements belong to a same family of conservative approximations satisfying the discrete metric identities. The methods use two sets of interpolation points for the solution and fluxes and impose the discrete residuals to be satisfied at solution points. Flux points contain points at faces of the elements where two-point numerical fluxes are used. Then, the space derivatives at solution points are evaluated by differencing the polynomials interpolating the fluxes.

%
%
\section{Numerical experiments}
\label{sec:experiments}

Let consider the compressible Euler equations of gas dynamics. The conservative variables and fluxes in \cref{eqn:HCL} are 
\begin{equation}
\label{eqn:Euler}
\uu =
\begin{pmatrix}
    \rho \\
    \rho {\bf v} \\
    \rho E \\
\end{pmatrix},
\quad
{\bf f }=
\begin{pmatrix}
    \rho {\bf v}^\top \\
    \rho {\bf v}{\bf v}^\top + \p {\bf I}_d \\
    (\rho E + \p) {\bf v}^\top\\
\end{pmatrix},
\end{equation} 

\noindent where $\rho$, ${\bf v}$, and $E$ denote the density, velocity vector, and specific total energy, respectively. The system is closed by defining the equation of state $\p=\p(\frac{1}{\rho},e)$ with $e=E-\frac{1}{2}{\bf v}\cdot{\bf v}$ the specific internal energy and the system is hyperbolic over the set of states $\Omega^a=\{\uu\in\mathbb{R}^{d+2}:\; \rho>0, {\bf v}\in\mathbb{R}^d, e>0\}$. We focus here on the polytropic ideal gas law $\p = (\gamma-1)\rho e$ where $\gamma=\frac{C_p}{C_v}=\frac{7}{5}$ is the ratio of specific heats. The compressible Euler equations \cref{eqn:HCL,eqn:Euler} possess the natural entropy -- entropy flux pair
\begin{equation*}
    \eta=-\rho \s, \quad {\bf q}= -\rho\s{\bf v}, \quad \s=C_v\ln\Big(\frac{p}{\rho^\gamma}\Big),
\end{equation*}

\noindent and $\B=\{\uu\in\Omega^a\;\s(\uu)\geq\s_0\}$, with $\s_0$ in $\mathbb{R}$, is an invariant domain for \cref{eqn:HCL,eqn:Euler} \cite{Frid_idp_LF_01}. We use our convex limiting strategy with the quasiconcave functions $\psi_1\equiv\rho$ and $\psi_2\equiv\rho e$. 

We now test the robustness and efficiency of the CFL condition on simulations of \cref{eqn:HCL,eqn:Euler} with discontinuous solutions on one-dimensional and unstructured two-dimensional grids. We use the modal DG method in \cref{sec:modalDG} and the DGSEM in \cref{sec:DGSEM}. The CFL conditions \cref{eq:CFL_modalDG,eq:CFL_DGSEM} guaranty the cell-averaged solution to be IDP and we then apply the limiter \cref{eqn:IRPlim} to further impose the high-order solution to be IDP. We will compare the following limiting strategies: a positivity limiter (POS) which imposes the solution to remain in $\Omega^a$ thus extending \cite{zhang2010positivity} to unstructured grids; an IDP limiter which imposes the solution to remain in the convex hull of the states in \cref{eq:convex_comb_un+1} and computing the time step in \cref{eq:CFL_modalDG,eq:CFL_DGSEM} with either the global wave estimate $\lambda_\kappa^\star$ (IDP) from algorithm \cref{eqn:modifseq}, or the local wave estimate $\lambda_\kappa^\star$ from \cref{eqn:modifseq_loc} (IDPloc). Imposing the IDP property may result in over-limiting of the solution and some strategies are usually applied such as bound relaxation \cite{Guermond_etal_IDP_conv_lim_19}, or subcell smoothness indicator \cite{PAZNER_idg_DGSEM20211}. We here follow the second strategy which relies on the smoothness indicator from \cite{persson_peraire_lim_06} (see \cite[Sec.~4.4]{PAZNER_idg_DGSEM20211} for details). Finally, we use the Suliciu pressure relaxation based numerical flux from \cite[Sec.~2.4.6]{bouchut_04} at interfaces, while for ${\bf h}_{sym}$ in the DGSEM scheme \cref{eq:semi-discr_DGSEM-res} we use the Kennedy and Grubber splitting from \cite{winters_etal_16}.

\subsection{Riemann problems}

We here consider Riemann problems \cref{eqn:RP} with initial data given in \cref{tab:RP_IC_LP}. We first consider computations with the DGSEM (see \cref{sec:schemes}) and the three limiting strategies. Results are shown in \cref{fig:sod,fig:lax,fig:toro}. The POS limiter only ensures that the solution remains in the set of states $\Omega^a$ and does not modify non-physical oscillations, with the IDP and IDPloc strategies succeed in damping spurious oscillations and result in very close oscillations. All numerical experiments always show that there is no sensible improvement to evaluate the pseudo-equilibrium state $\uu_\kappa^\star$ with local wave estimates $\lambda_\kappa^{\star k}$ in \cref{eqn:modifseq} instead of a global estimate $\lambda_\kappa^{\star}$ in \cref{eqn:modifseq_loc}, while it leads to a more expensive algorithm. Besides, our observation show that using a local estimates require more iterations for algorithm \cref{eqn:modifseq_loc} to converge with a global average between $2.8$ and $3.2$ iterations evazluated over the whole computations compared to between $1.11$ and $1.17$ when using \cref{eqn:modifseq}. In the latter case, for most of the computations the initial guess $\uu^\star_0$ given in \cref{eqn:modifseq} satisfies the requirement \cref{eqn:locubar} and \cref{eqn:loclambar} so no more step is needed. 

\begin{table}
     \begin{center}
     \caption{Initial conditions of Riemann problems \cref{eqn:RP} where $x_0$ indicates the abscissa separating the states.}
     \begin{tabular}{lcccc}
        \noalign{\smallskip}\hline\noalign{\smallskip}
     problem & left state $(\rho_L,u_L,p_L)^\top$ & right state
$(\rho_R,u_R,p_R)^\top$ & $x_0$ \\ 
        \noalign{\smallskip}\hline\noalign{\smallskip}
    Sod \cite{sod1978survey} & $(1,0,1)^\top$ & $(0.125,0,0.1)^\top$ & $0$ \\ 
    Lax & $(0.445,0.698,3.528)^\top$ & $(0.5,0,0.571)^\top$ & $0$ \\ 
    Toro $4$ \cite{toro_book} & $(5.99924,19.5975,460.894)^\top$ &
$(5.99242,-6.19633,46.0950)^\top$ & $-0.1$ \\ 
        \noalign{\smallskip}\hline\noalign{\smallskip}
    \end{tabular}
     \label{tab:RP_IC_LP}
    \end{center}
\end{table}

\begin{figure}
    \centering
    \includegraphics[width=4.6cm]{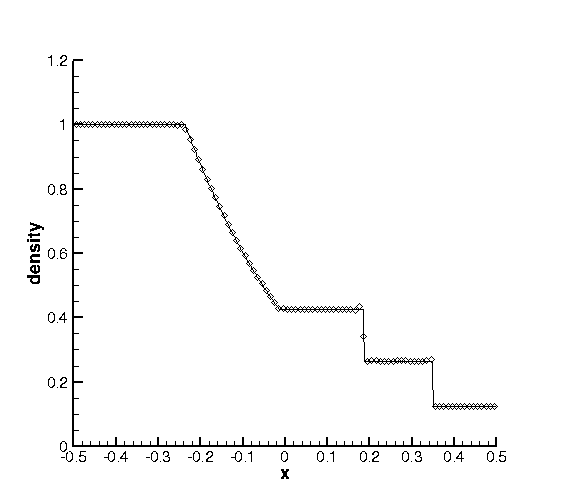}\hspace{-0.6cm}
    \includegraphics[width=4.6cm]{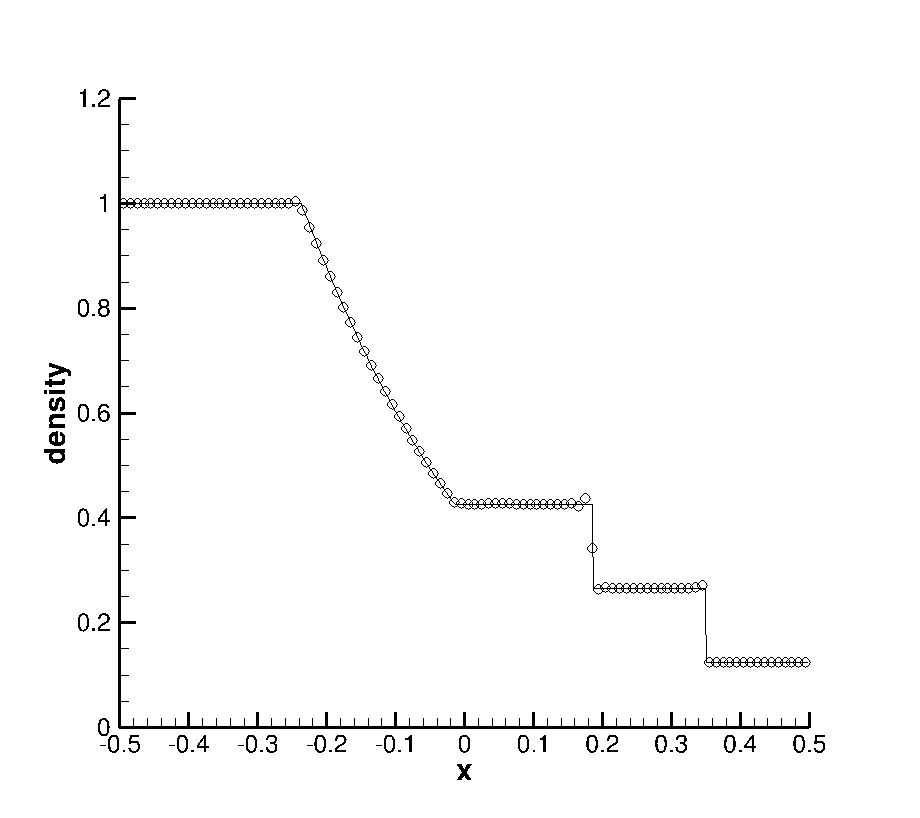}\hspace{-0.6cm}
    \includegraphics[width=4.6cm]{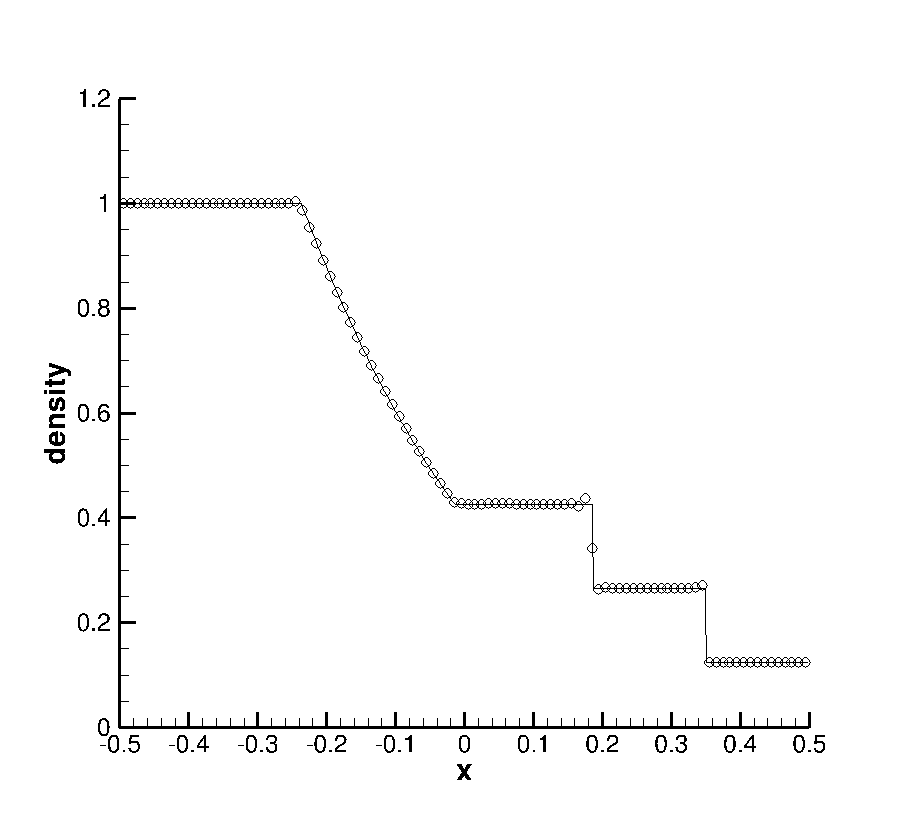} \\
    \includegraphics[width=4.6cm]{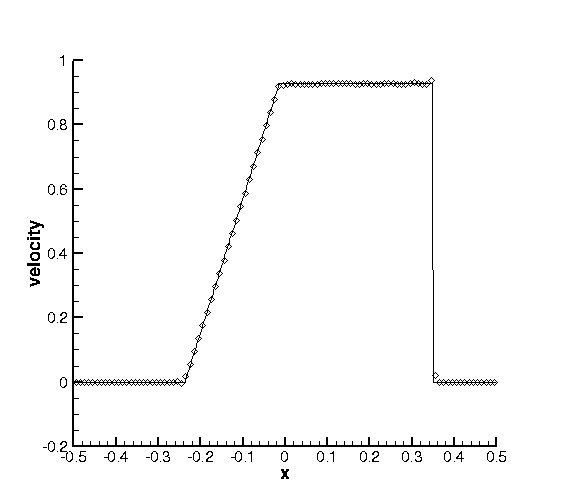}\hspace{-0.6cm}
    \includegraphics[width=4.6cm]{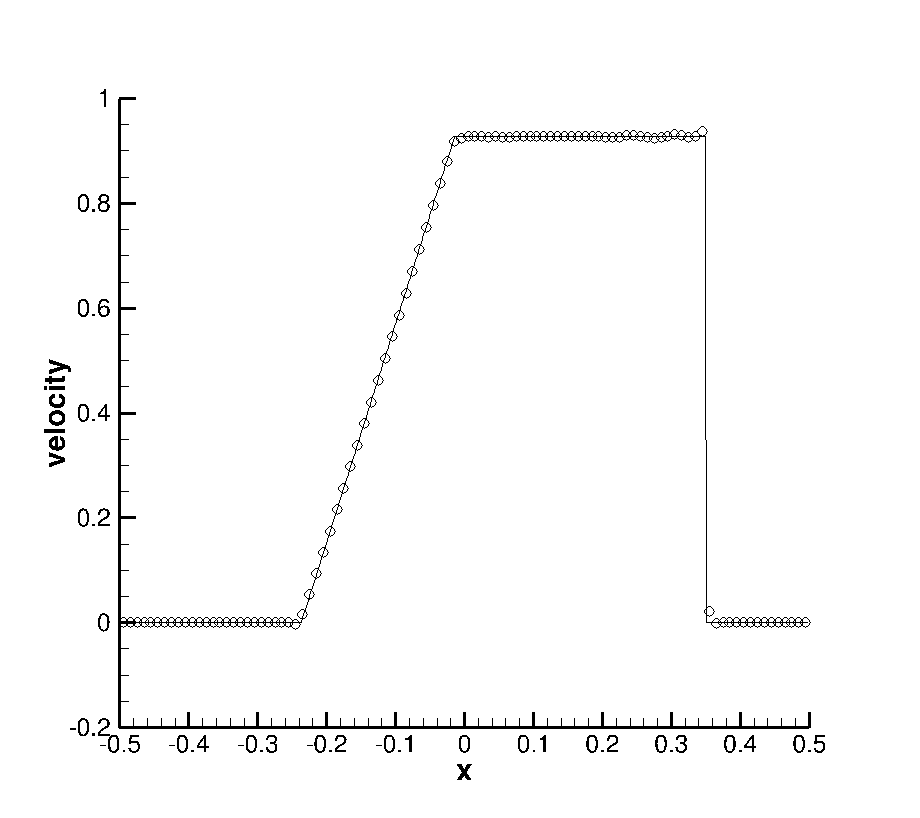}\hspace{-0.6cm}
    \includegraphics[width=4.6cm]{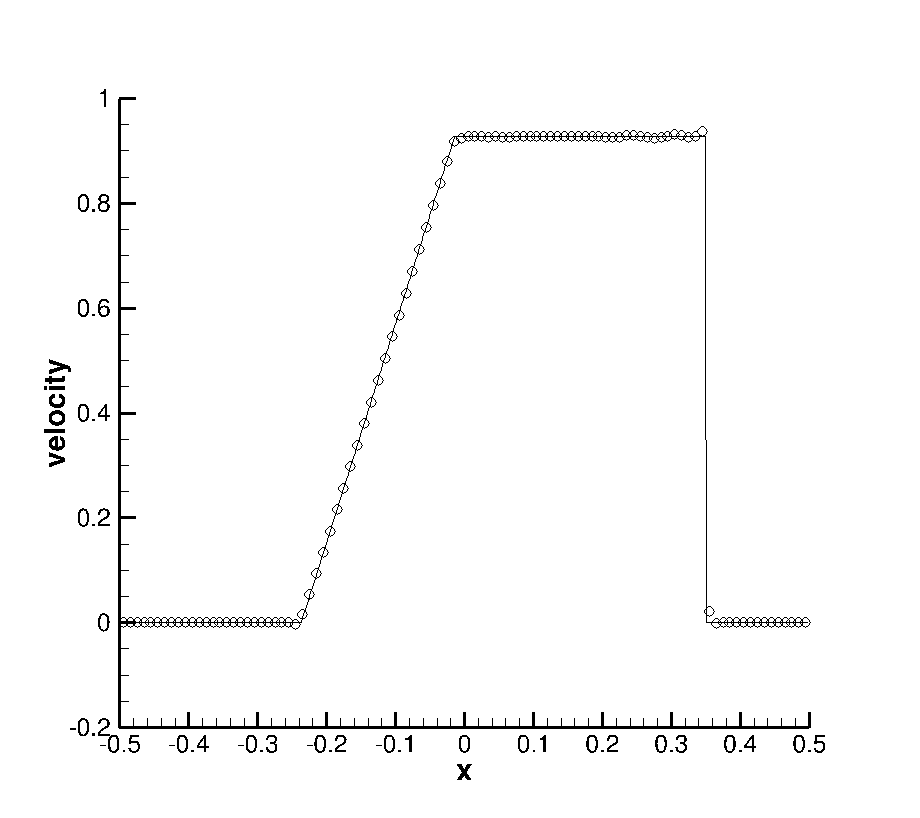} \\
    \subfloat[POS]{\includegraphics[width=4.6cm]{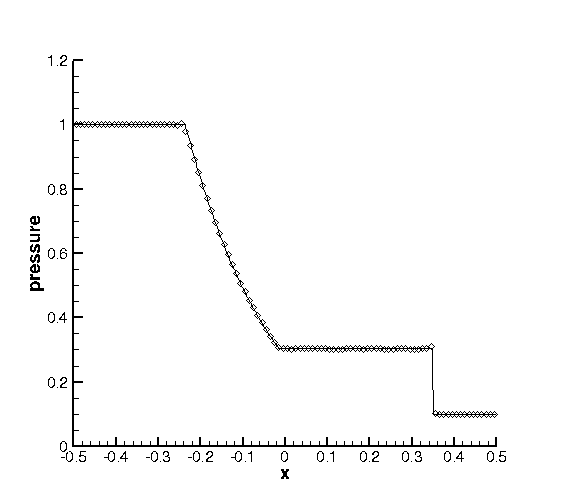}}\hspace{-0.6cm}
    \subfloat[IDP]{\includegraphics[width=4.6cm]{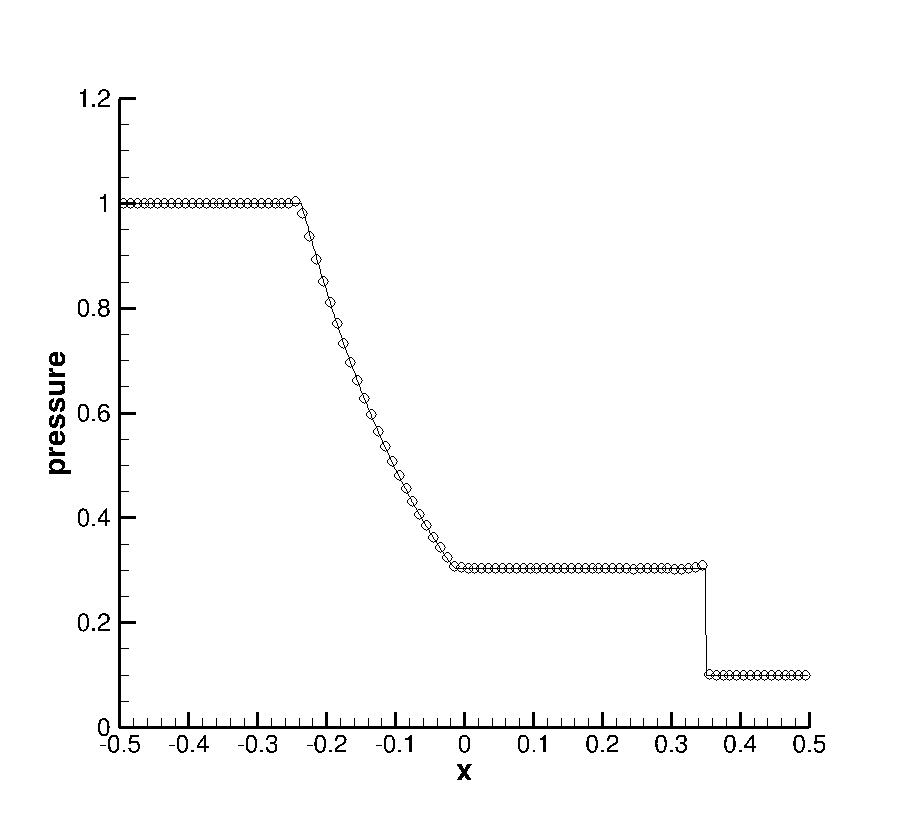}}\hspace{-0.6cm}
    \subfloat[IDPloc]{\includegraphics[width=4.6cm]{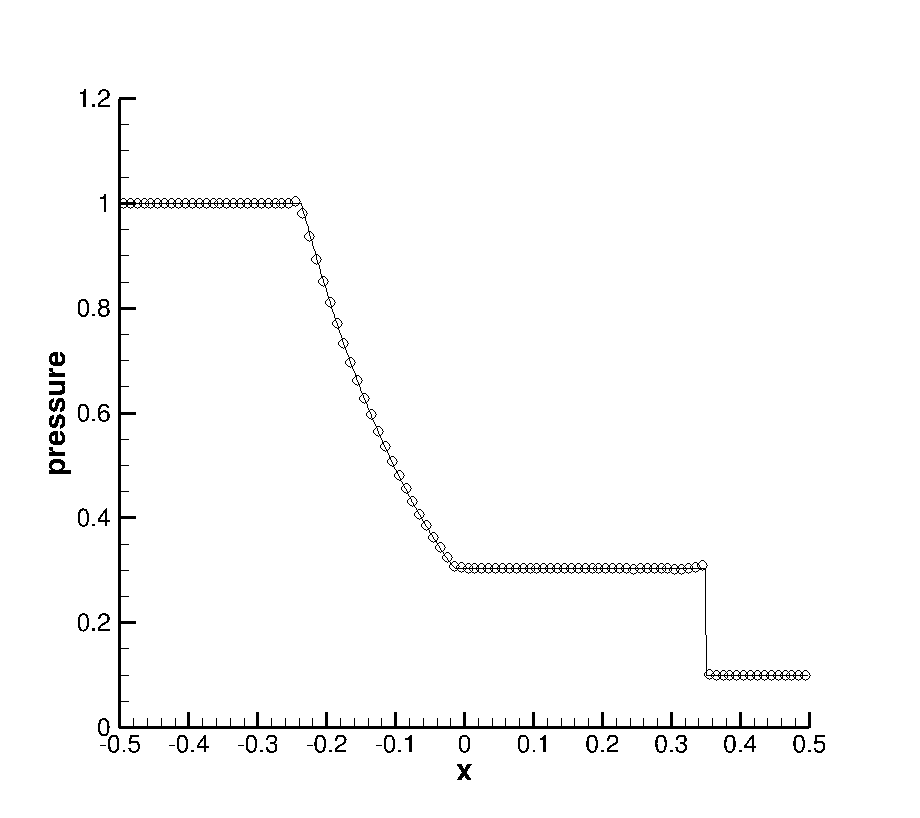}}
    \caption{DGSEM computations of the Sod problem at $t=0.2$ with $p=3$ and $N=100$ elements for the density (top), velocity (middle), and pressure (bottom).}
    \label{fig:sod}
\end{figure}


\begin{figure}
    \centering
    \includegraphics[width=4.6cm]{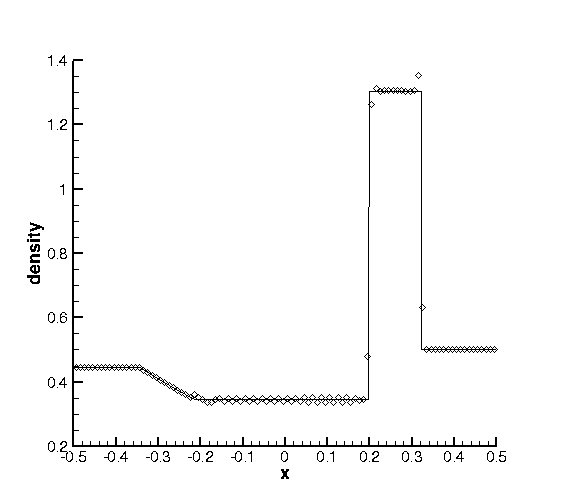}\hspace{-0.6cm}
    \includegraphics[width=4.6cm]{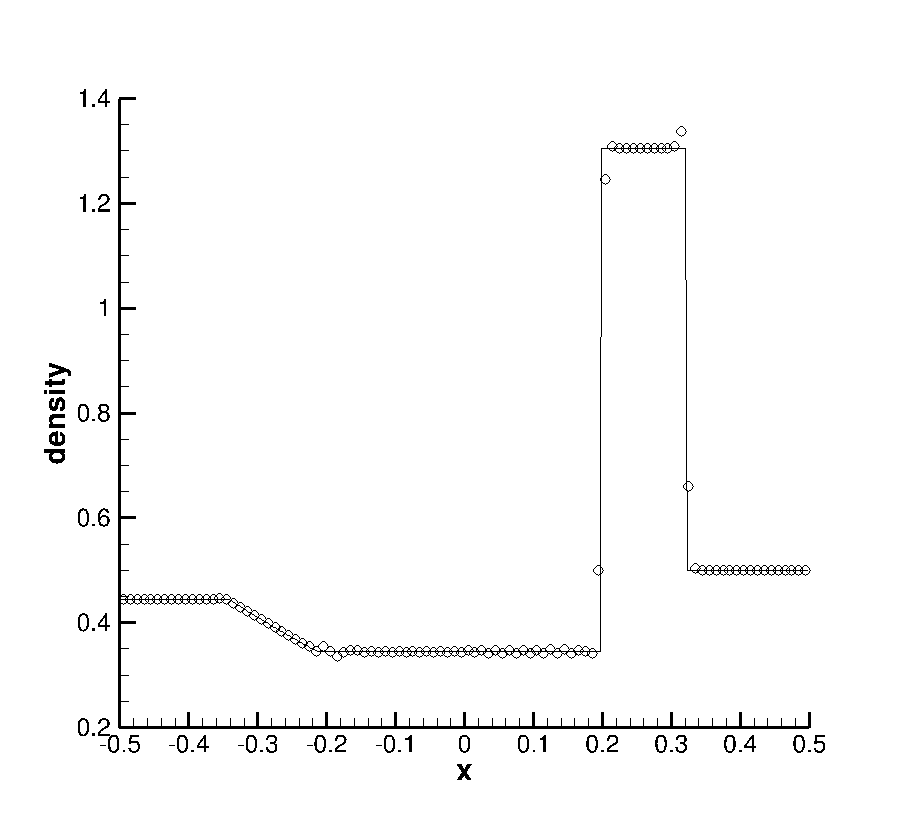}\hspace{-0.6cm}
    \includegraphics[width=4.6cm]{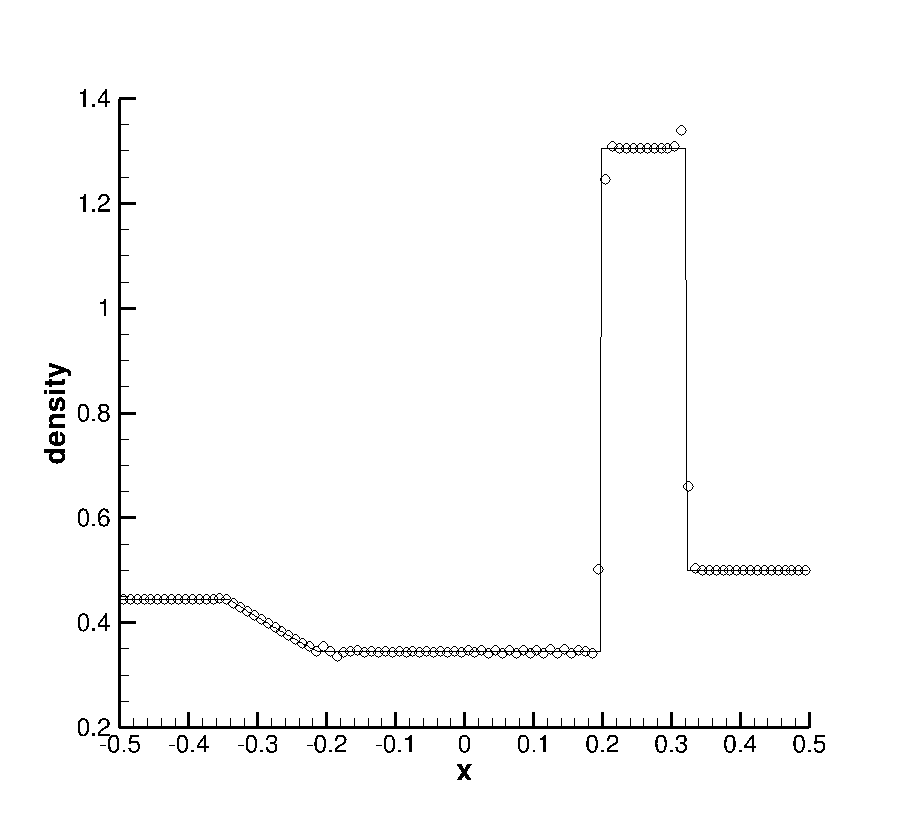} \\
    \includegraphics[width=4.6cm]{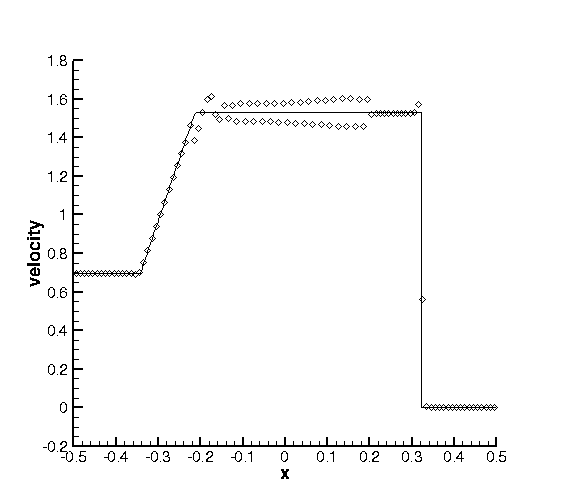}\hspace{-0.6cm}
    \includegraphics[width=4.6cm]{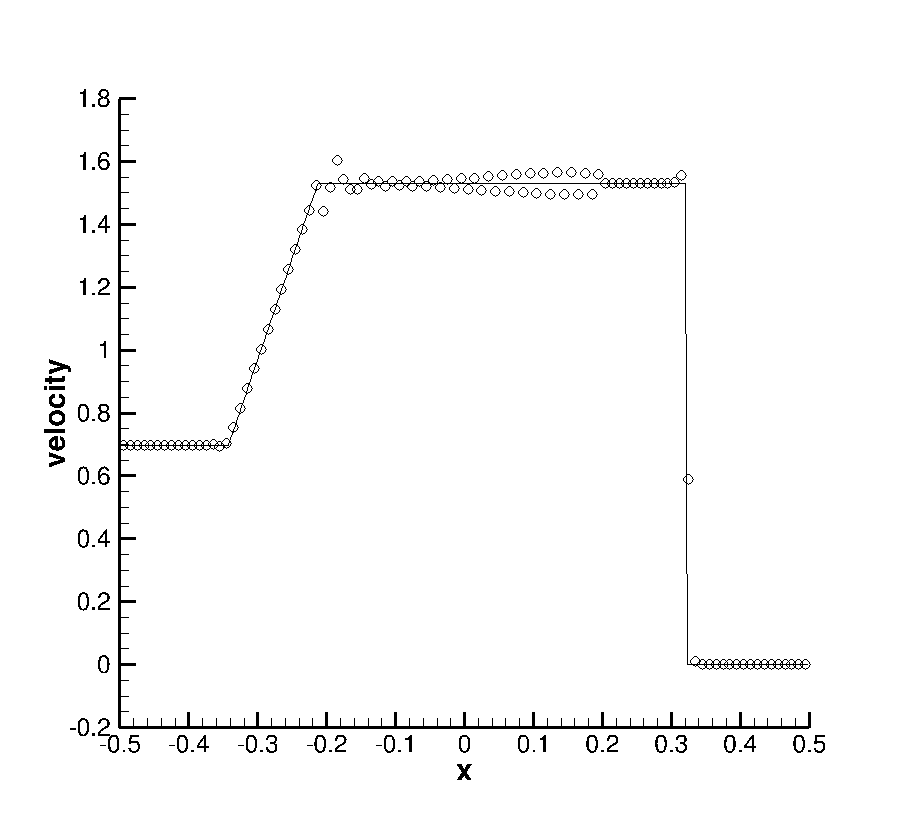}\hspace{-0.6cm}
    \includegraphics[width=4.6cm]{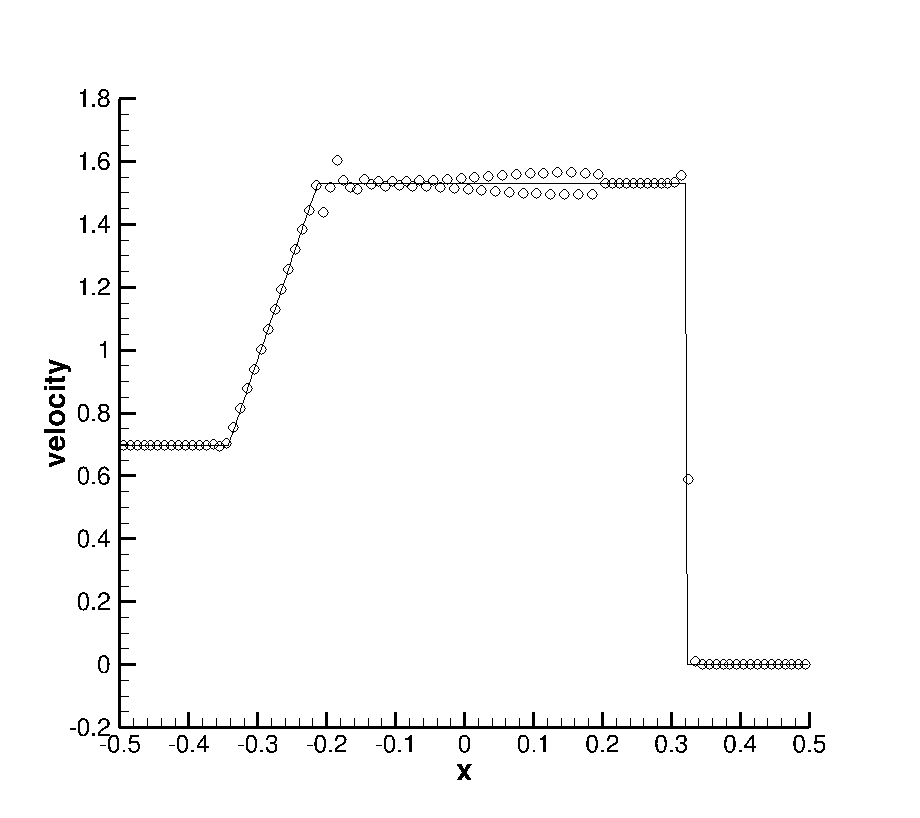} \\
    \subfloat[POS]{\includegraphics[width=4.6cm]{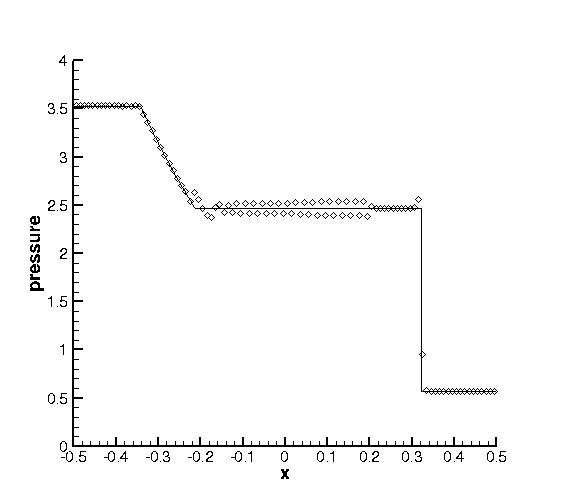}}\hspace{-0.6cm}
    \subfloat[IDP]{\includegraphics[width=4.6cm]{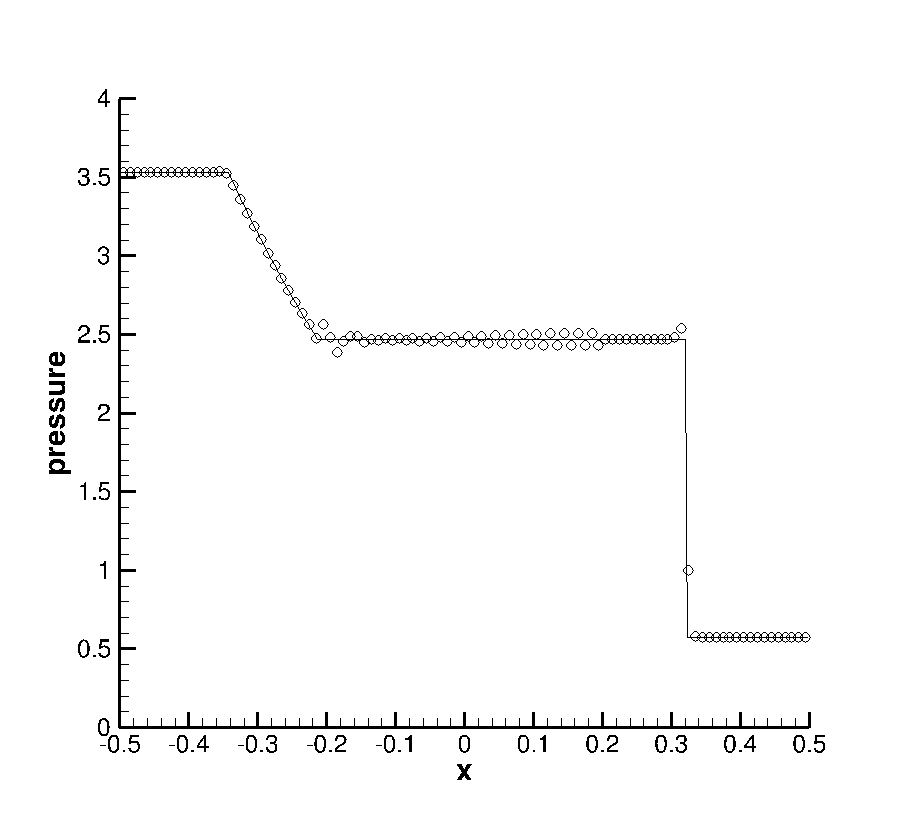}}\hspace{-0.6cm}
    \subfloat[IDPloc]{\includegraphics[width=4.6cm]{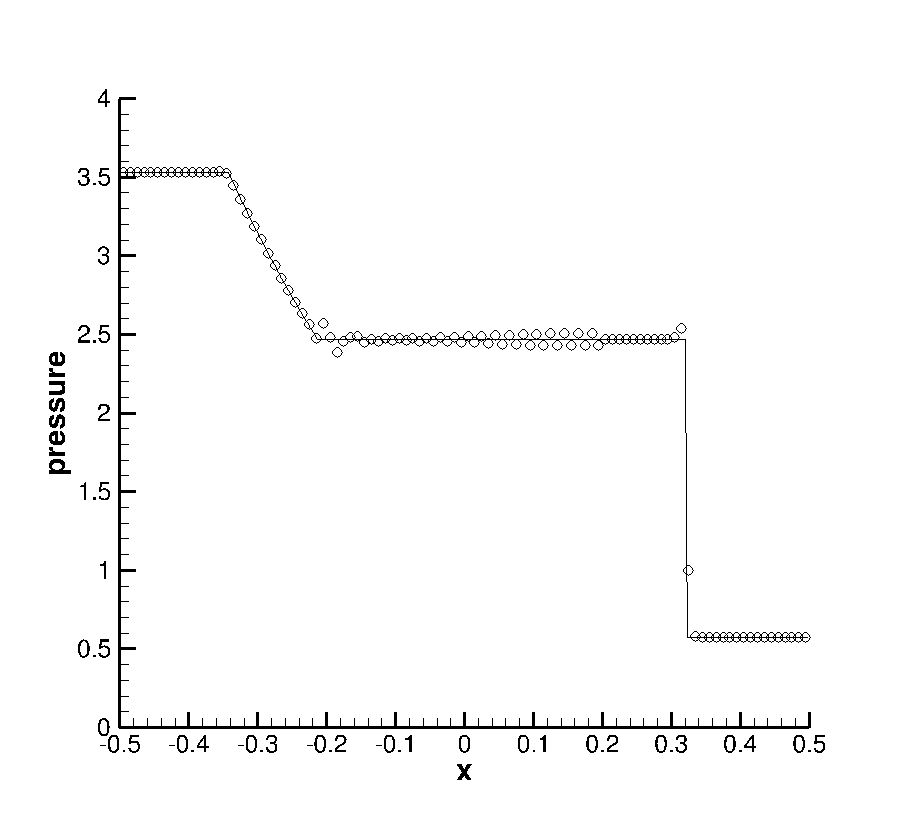}}
    \caption{DGSEM computations of the Lax problem at $t=0.2$ with $p=3$ and $N=100$ elements for the density (top), velocity (middle), and pressure (bottom).}
    \label{fig:lax}
\end{figure}



\begin{figure}
    \centering
    \includegraphics[width=4.6cm]{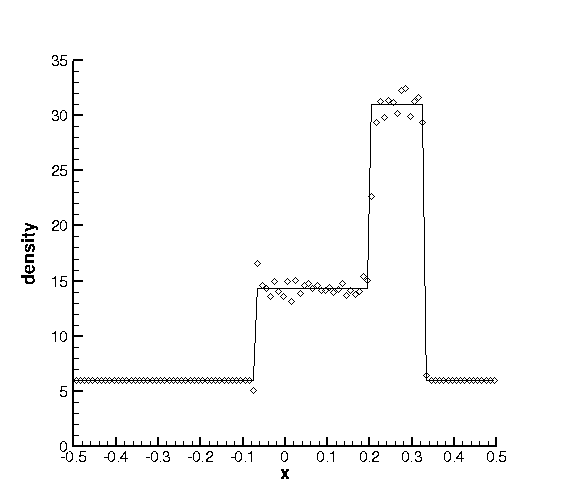}\hspace{-0.6cm}
    \includegraphics[width=4.6cm]{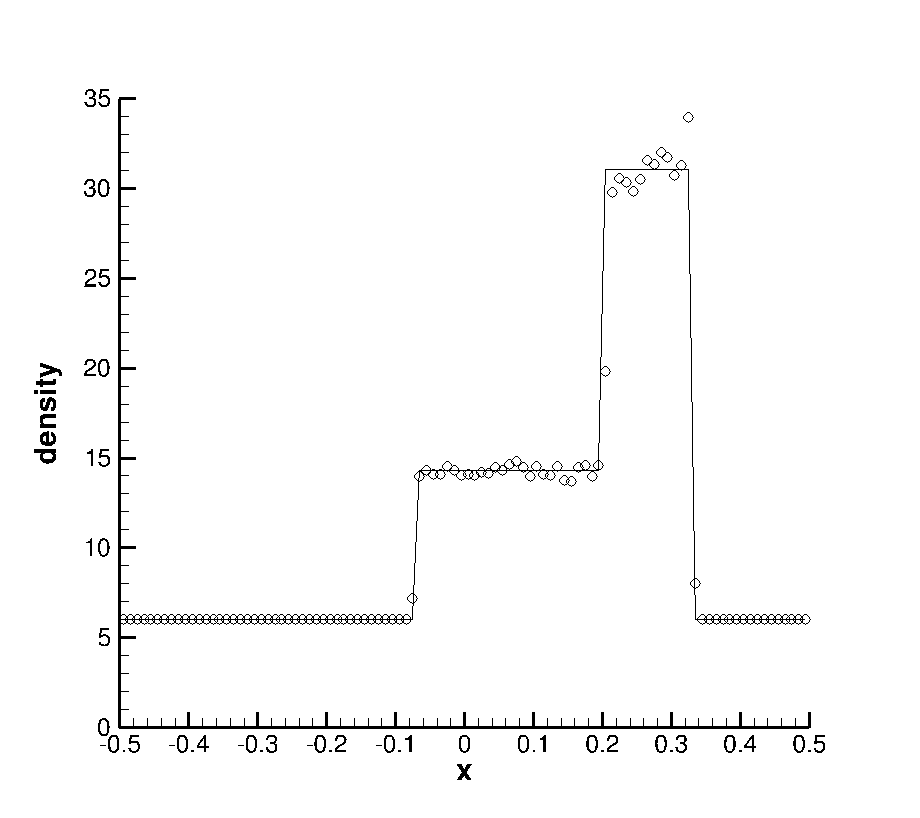}\hspace{-0.6cm}
    \includegraphics[width=4.6cm]{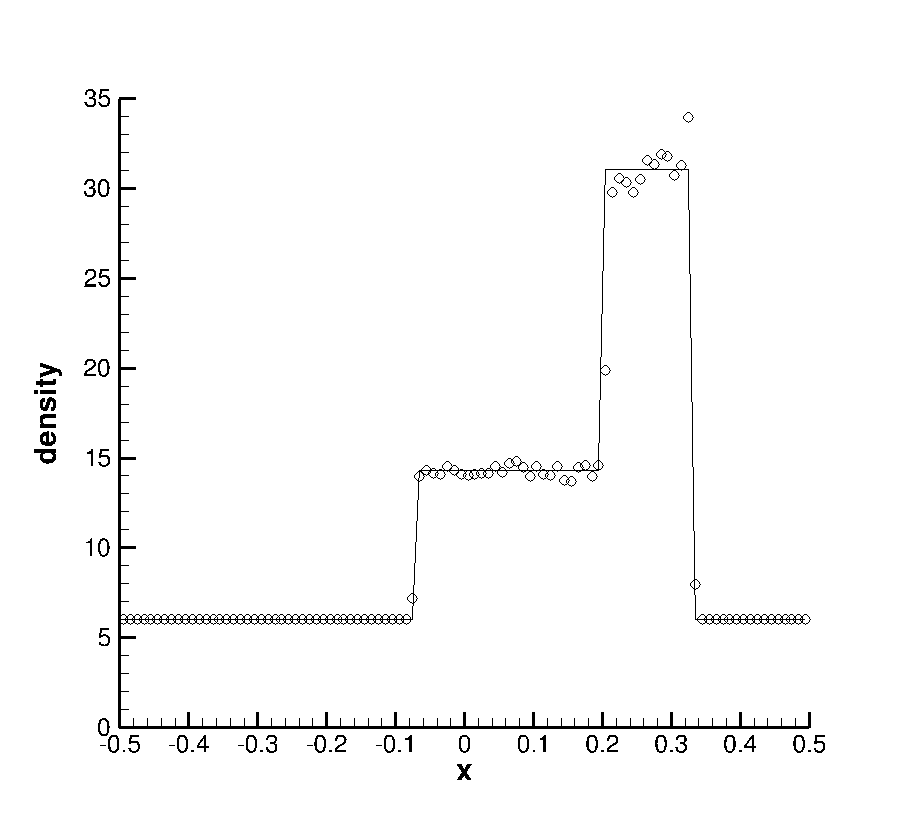} \\
    \includegraphics[width=4.6cm]{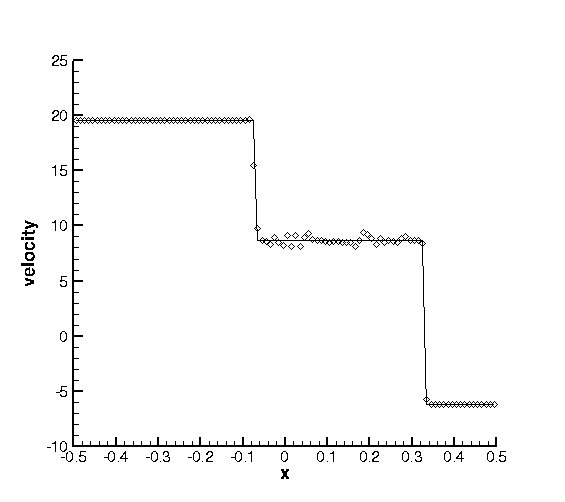}\hspace{-0.6cm}
    \includegraphics[width=4.6cm]{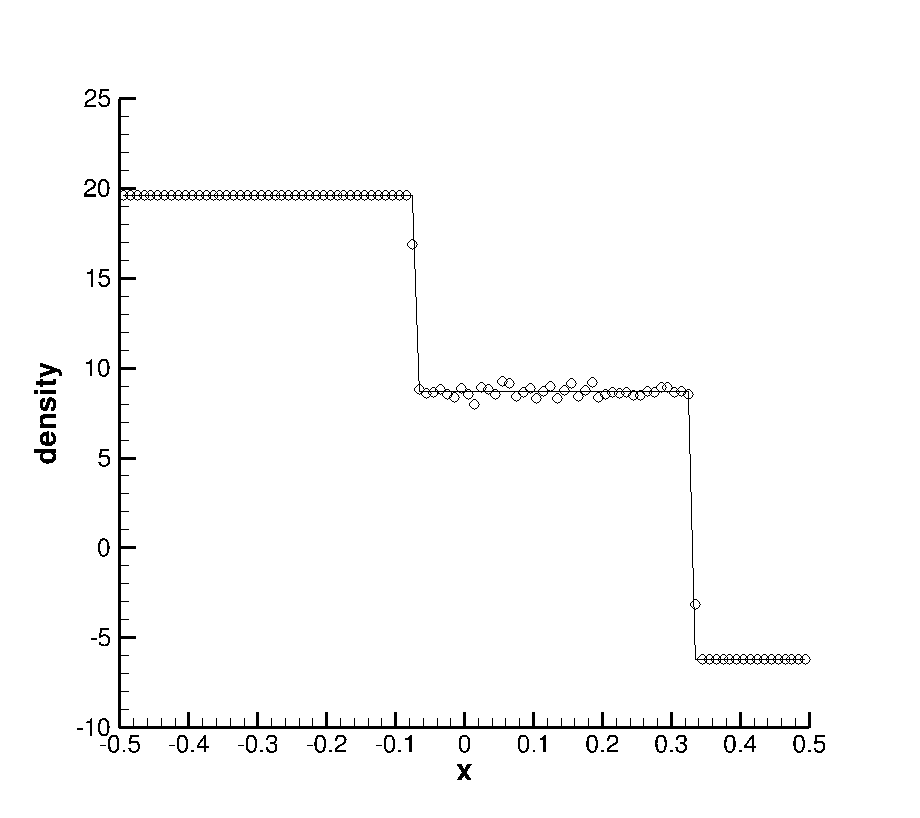}\hspace{-0.6cm}
    \includegraphics[width=4.6cm]{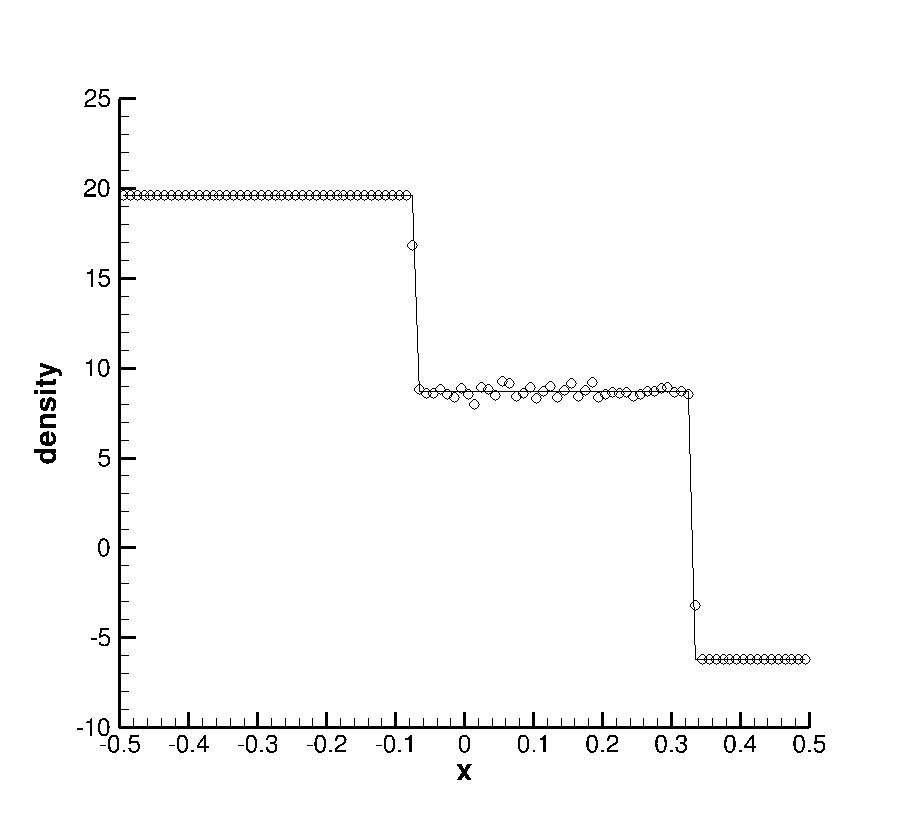} \\
    \subfloat[POS]{\includegraphics[width=4.6cm]{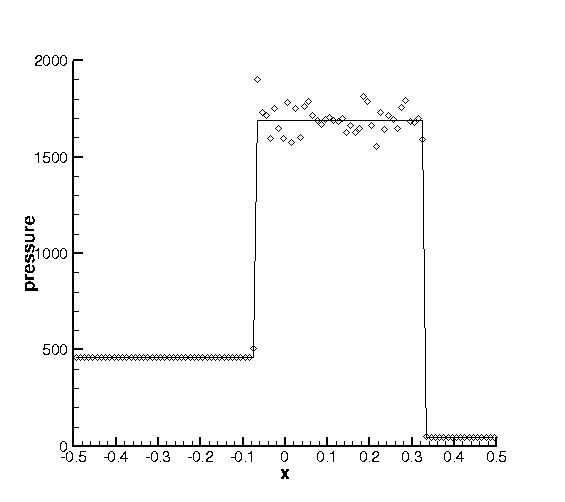}}\hspace{-0.6cm}
    \subfloat[IDP]{\includegraphics[width=4.6cm]{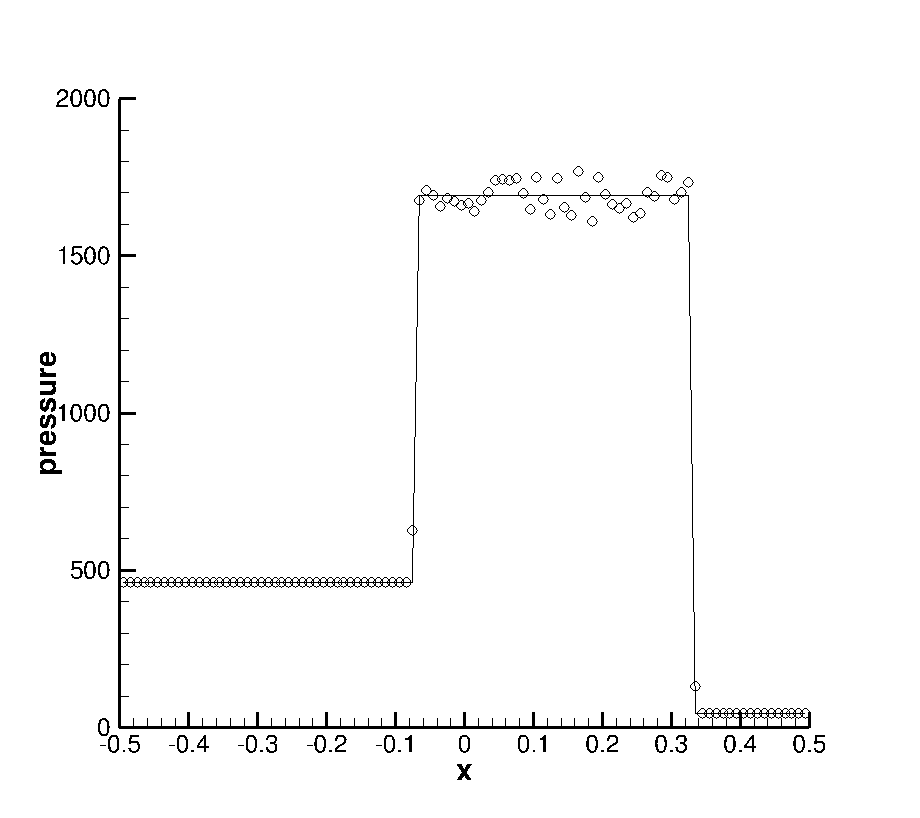}}\hspace{-0.6cm}
    \subfloat[IDPloc]{\includegraphics[width=4.6cm]{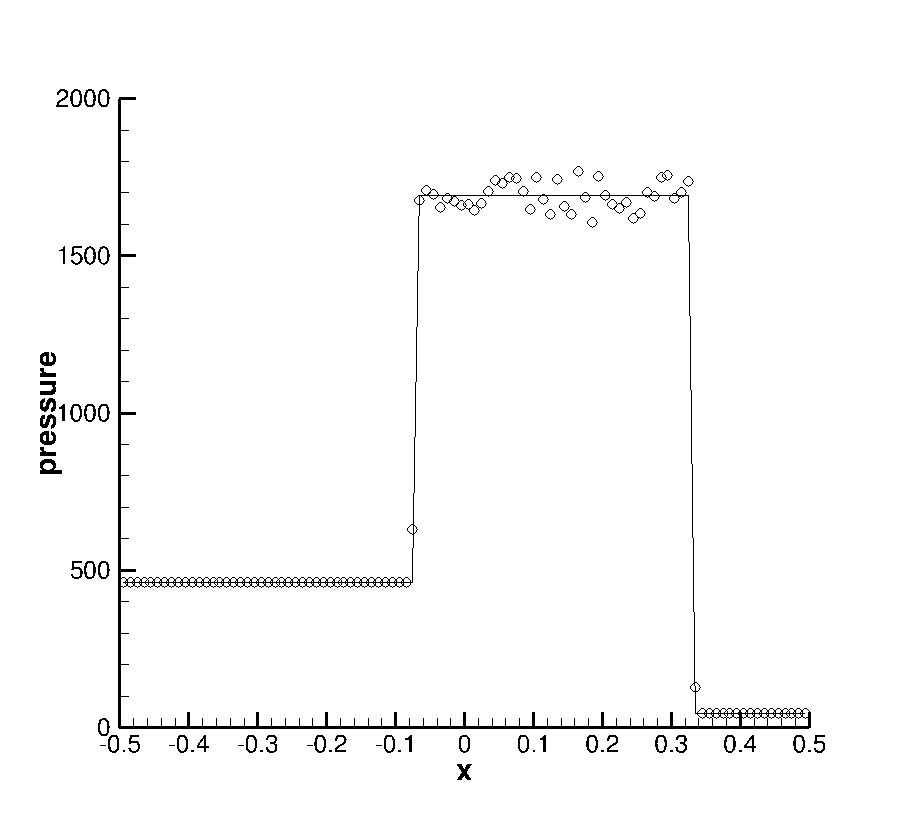}}
    \caption{DGSEM computations of the Toro 4 problem at $t=0.2$ with $p=3$ and $N=100$ elements for the density (top), velocity (middle), and pressure (bottom).}
    \label{fig:toro}
\end{figure}



%
We now reproduce the same numerical experiments but using the classical modal DG (see \cref{sec:modalDG}) with the IDP strategy. \Cref{fig:mod_DG_RP} presents the results for the threes Riemann problems and we observe a good resolution of the waves with only lower amplitude oscillations compared to the DGSEM results. Imposing the IDP property at all quadrature points may result in stronger limiting of the solution with the modal DG scheme as these are more quadrature points (compare \cref{fig:stencil_2D,fig:stencil_2D_DGSEM}). Again, algorithm \cref{eqn:modifseq} converges very fast with a global average between of $1.11$ and $1.18$ iterations evaluated over the whole computations. 
Finally note that all the computations (with either DGSEM, or modal DG scheme) require to apply one of the limiting strategy to avoid non-physical solution. 

\begin{figure}
    \centering
    \includegraphics[width=4.6cm]{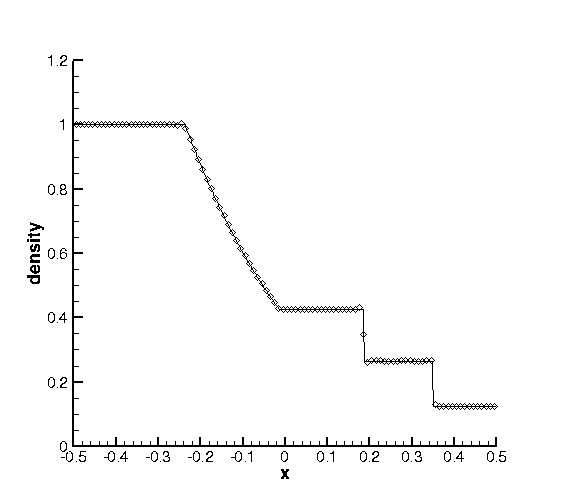}\hspace{-0.6cm}
    \includegraphics[width=4.6cm]{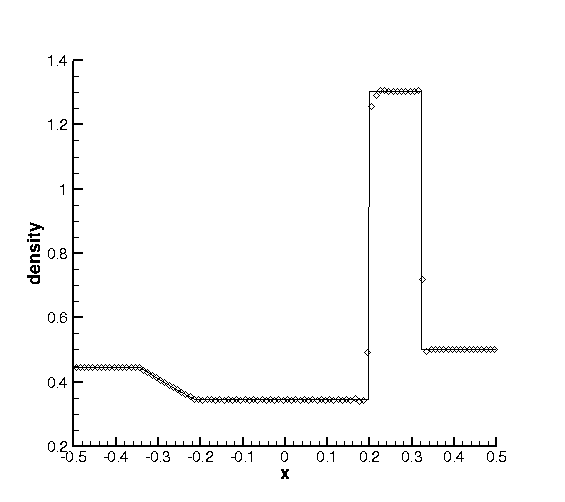}\hspace{-0.6cm}
    \includegraphics[width=4.6cm]{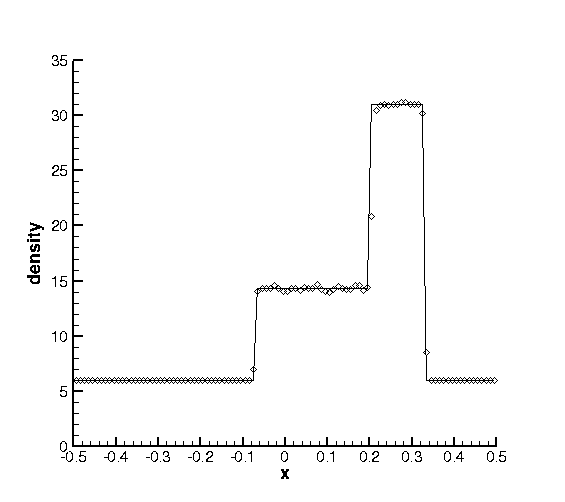} \\
    \includegraphics[width=4.6cm]{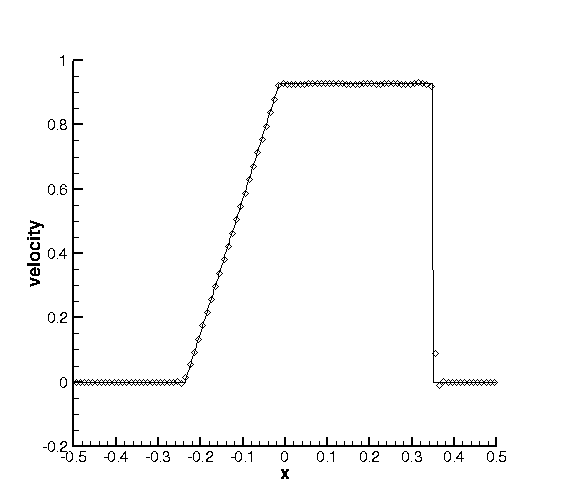}\hspace{-0.6cm}
    \includegraphics[width=4.6cm]{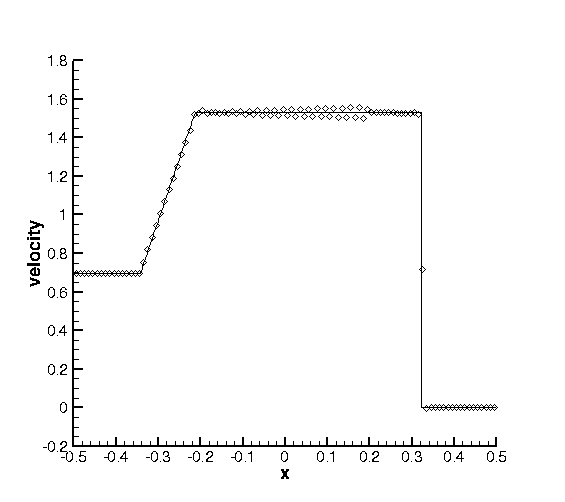}\hspace{-0.6cm}
    \includegraphics[width=4.6cm]{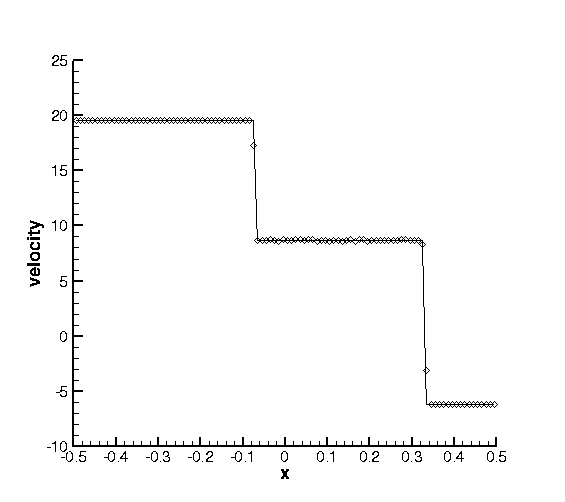} \\
    \subfloat[Sod]{\includegraphics[width=4.6cm]{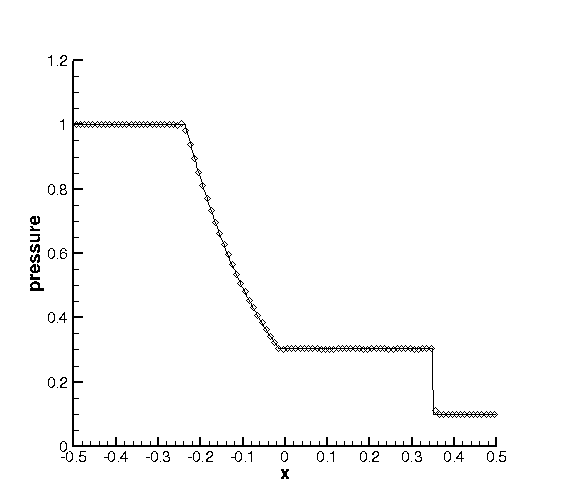}}\hspace{-0.6cm}
    \subfloat[Lax]{\includegraphics[width=4.6cm]{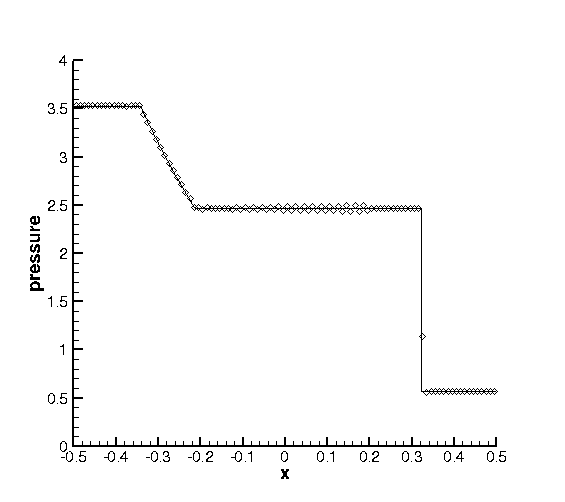}}\hspace{-0.6cm}
    \subfloat[Toro 4]{\includegraphics[width=4.6cm]{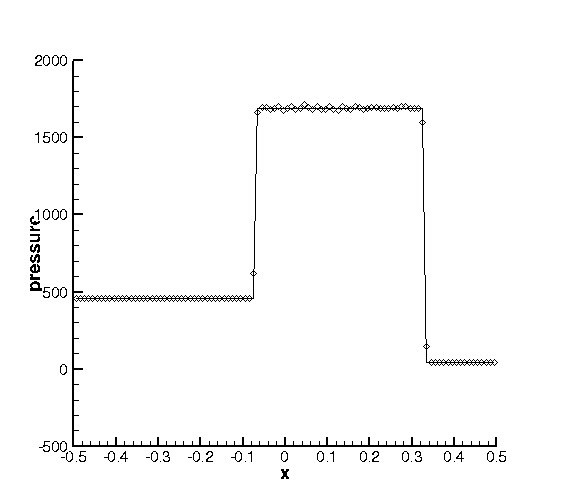}}
		\caption{Modal DG  (IDP) computations of Riemann problems with $p=3$ and $N=100$ elements for the density (top), velocity (middle), and pressure (bottom).}
    \label{fig:mod_DG_RP}
\end{figure}

\subsection{Double Mach Reflection problem}


We now consider the two-dimensional problem of a Mach $10$ shock reflection over a $30^\circ$ wedge \cite{woodward_collela_84}. Ahead of the shock, the gaz is at rest and has a density of $1.4$ and pressure of $1$. Inflow and outflow conditions are applied at the left and bottom boundaries, while a symmetry condition is applied at the top boundary. Initially, the shock is located at $x=0$ corresponding to the beginning of the wedge. We use an unstructured mesh with $132800$ quadrangles to solve the horizontally moving shock interacting with the inclined wall where slip conditions are applied. In \cref{fig:DMR_far} we can observe qualitatively similar results for the three tests. The limiting strategy induce some spurious oscillations, but the computations proved to be robust. Here again, algorithm \cref{eqn:modifseq} proves to be cheap in term of iterations to converge with global averages of $1.18$ for both the DGSEM (IDP) and modal DG (IDP) computations.


\begin{figure}
    \centering
    \subfloat[POS (DGSEM)]{\includegraphics[width=6cm]{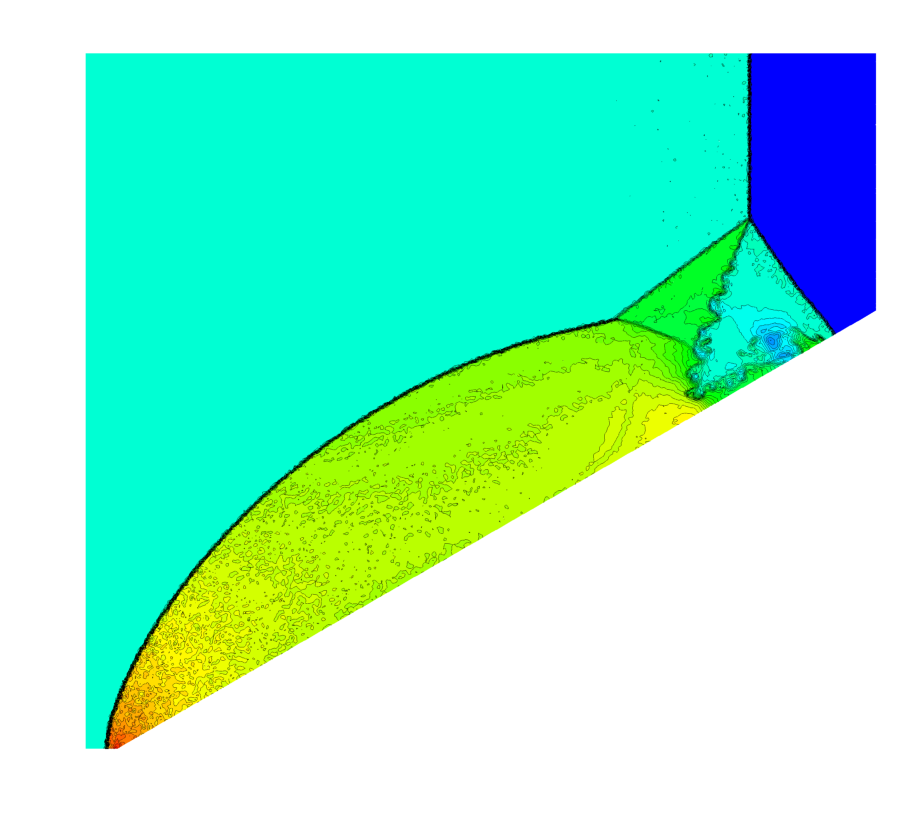}}
    \subfloat[IDP (DGSEM)]{\includegraphics[width=6cm]{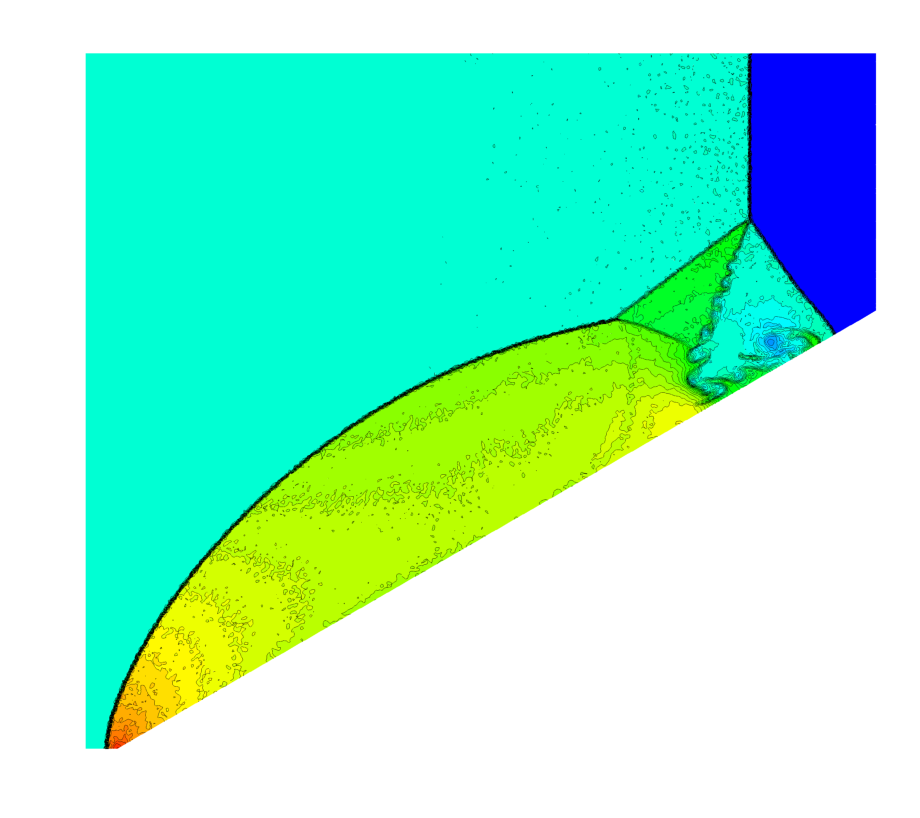}}\\
    \subfloat[IDPloc (DGSEM)]{\includegraphics[width=6cm]{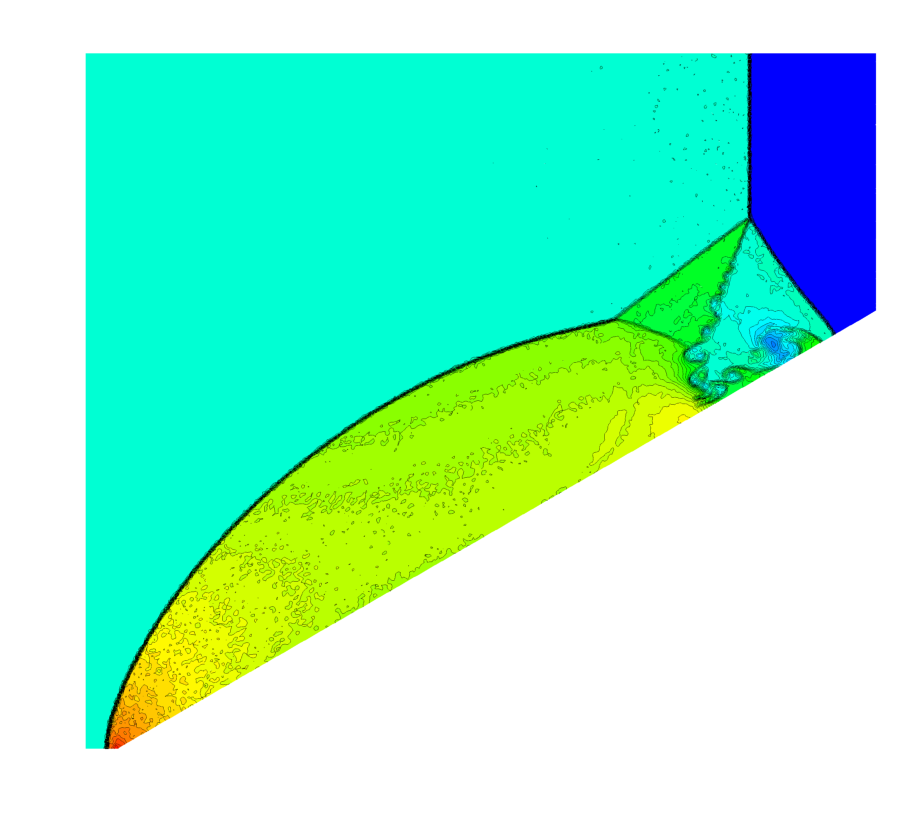}}
		\subfloat[IDP (modal DG)]{\includegraphics[width=6cm]{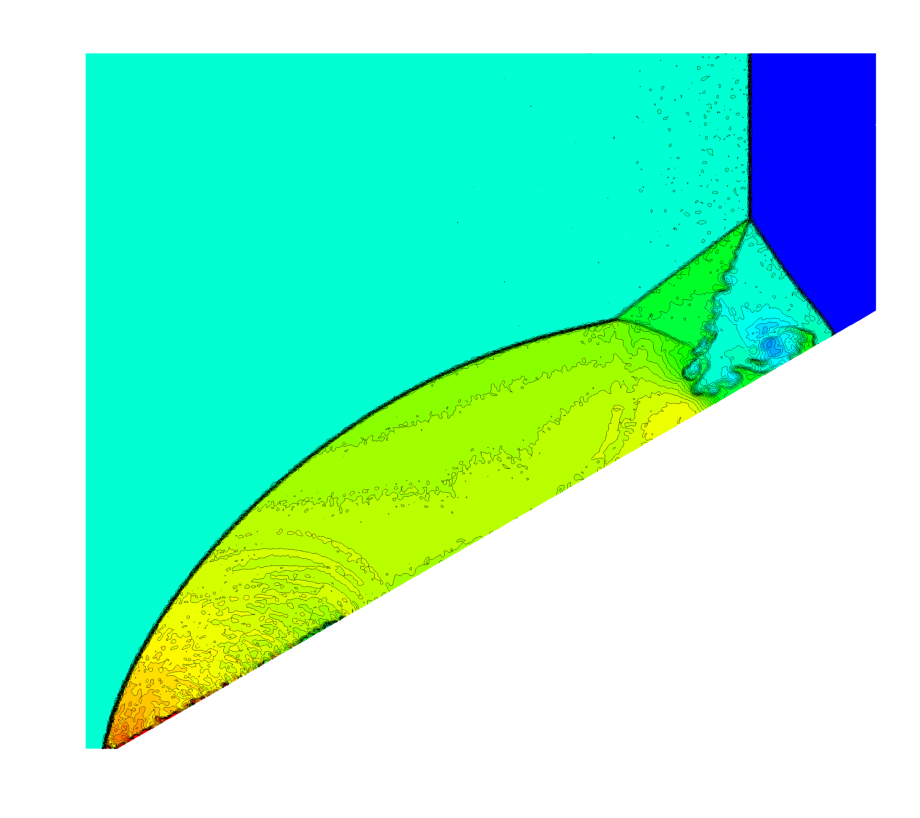}}
    \caption{Comparison of the results for the double Mach reflection problem: 39 equispaced density contours between 2.1 and 22.1.}
    \label{fig:DMR_far}
\end{figure}



%
%
\section{Conclusions}\label{sec:conclusions}

We here investigate robustness and stability properties of discretely conservative high-order spectral discontinuous methods with explicit time stepping for the approximation of hyperbolic systems of conservation laws. We derive a condition on the time step to guaranty that the cell-averaged approximate solution is a convex combination of DOFs at preceding time step and updates of invariant domain preserving and entropy stable three-point schemes. As a consequence, the cell-averaged solution lies in some convex invariant domain and we apply a posteriori scaling limiting techniques \cite{zhang2010positivity} that impose to all the DOFs to satisfy the same invariant domain properties.

The condition on the time step is evaluated from the traces of the solution at faces of the mesh and can be easily evaluated on the fly. Provided the scheme satisfies the discrete metric identities, the condition is fairly general and holds for general unstructured grid with possibly curved elements. It relies on the existence of a so-called pseudo-equilibrium state which satisfies a flux balance over each mesh element, and the existence of a quadrature rule including the traces to evaluate the cell-averaged solution. We here prove their existence in the general case and provide an iterative algorithm to evaluate the pseudo-equilibrium state. We illustrate these results with the classical modal discontinuous Galerkin and DGSEM schemes. Numerical experiments in one and two space dimensions are provided to illustrate the robustness and stability of the present approach. The extension of this framework to parabolic systems of conservation laws, e.g., the compressible Navier-Stokes, is a possible direction of future research.

\bibliographystyle{siamplain}
\bibliography{../../BIBLIO_BIB/biblio_generale}

\begin{thebibliography}{10}

\bibitem{avriel1972r}
{\sc M.~Avriel}, {\em r-convex functions}, Math. Program., 2 (1972),
  pp.~309--323.

\bibitem{BarterDarmofal_AVDG_10}
{\sc G.~E. Barter and D.~L. Darmofal}, {\em Shock capturing with pde-based
  artificial viscosity for dgfem: Part i. formulation}, J. Comput. Phys., 229
  (2010), pp.~1810--1827,
  \url{https://doi.org/https://doi.org/10.1016/j.jcp.2009.11.010}.

\bibitem{bassi1997high}
{\sc F.~Bassi and S.~Rebay}, {\em High-order accurate discontinuous finite
  element solution of the 2d {E}uler equations}, J. Comput. Phys., 138 (1997),
  pp.~251--285.

\bibitem{BORIS_Book_FCT_73}
{\sc J.~P. Boris and D.~L. Book}, {\em Flux-corrected transport. {I. SHASTA}, a
  fluid transport algorithm that works}, J. Comput. Phys., 11 (1973),
  pp.~38--69,
  \url{https://doi.org/https://doi.org/10.1016/0021-9991(73)90147-2}.

\bibitem{bouchut_04}
{\sc F.~Bouchut}, {\em Nonlinear Stability of Finite Volume Methods for
  Hyperbolic Conservation Laws and Well-Balanced Schemes for Sources},
  Frontiers in Mathematics, Birkh{\"a}user Basel, 2004.

\bibitem{chan_skewDG_2019}
{\sc J.~Chan}, {\em Skew-symmetric entropy stable modal discontinuous
  {G}alerkin formulations}, J. Sci. Comput., 81 (2019), pp.~459--485,
  \url{https://doi.org/https://doi.org/10.1007/s10915-019-01026-w}.

\bibitem{cockburn-shu89}
{\sc B.~Cockburn and C.~Shu}, {\em {TVB} {R}unge-{K}utta local projection
  discontinuous {G}alerkin finite element method for scalar conservation laws
  ii: general framework}, Math. Comput., 52 (1989), pp.~411--435,
  \url{https://doi.org/https://doi.org/10.1090/S0025-5718-1989-0983311-4}.

\bibitem{Crean_etal_SBP_curved_2018}
{\sc J.~Crean, J.~E. Hicken, D.~C. {Del Rey Fernández}, D.~W. Zingg, and M.~H.
  Carpenter}, {\em Entropy-stable summation-by-parts discretization of the
  {E}uler equations on general curved elements}, J. Comput. Phys., 356 (2018),
  pp.~410--438,
  \url{https://doi.org/https://doi.org/10.1016/j.jcp.2017.12.015}.

\bibitem{Diperna1983ConvergenceOA}
{\sc R.~J. Diperna}, {\em Convergence of approximate solutions to conservation
  laws}, Arch. Rat. Mech. Anal., 82 (1983), pp.~27--70.

\bibitem{don1994numerical}
{\sc W.~S. Don}, {\em Numerical study of pseudospectral methods in shock wave
  applications}, J. Comput. Phys., 110 (1994), pp.~103--111.

\bibitem{fisher_carpenter_13}
{\sc T.~C. Fisher and M.~H. Carpenter}, {\em High-order entropy stable finite
  difference schemes for nonlinear conservation laws: Finite domains}, J.
  Comput. Phys., 252 (2013), pp.~518--557.

\bibitem{franco2020high}
{\sc M.~Franco, J.-S. Camier, J.~Andrej, and W.~Pazner}, {\em High-order
  matrix-free incompressible flow solvers with {GPU} acceleration and low-order
  refined preconditioners}, Comput. Fluids, 203 (2020), p.~104541.

\bibitem{Frid_idp_LF_01}
{\sc H.~Frid}, {\em Maps of convex sets and invariant regions for
  finite-difference systems of conservation laws}, Arch. Rational Mech. Anal.,
  160 (2001), pp.~245–--269,
  \url{https://doi.org/https://doi.org/10.1007/s002050100166}.

\bibitem{gassner_13}
{\sc G.~J. Gassner}, {\em A skew-symmetric discontinuous {G}alerkin spectral
  element discretization and its relation to {SBP}-{SAT} finite difference
  methods}, SIAM J. Sci. Comput., 35 (2013), pp.~A1233--A1253,
  \url{https://doi.org/10.1137/120890144}.

\bibitem{winters_etal_16}
{\sc G.~J. Gassner, A.~R. Winters, and D.~A. Kopriva}, {\em Split form nodal
  discontinuous {G}alerkin schemes with summation-by-parts property for the
  compressible {E}uler equations}, J. Comput. Phys., 327 (2016), pp.~39--66.

\bibitem{godunov_59}
{\sc S.~Godunov}, {\em A difference scheme for numerical computation of
  discontinuous solutions of equations of fluid dynamics}, Math. USSR Sbornik,
  47 (1959), pp.~271--306.

\bibitem{gottlieb1997gibbs}
{\sc D.~Gottlieb and C.-W. Shu}, {\em On the {G}ibbs phenomenon and its
  resolution}, SIAM Rev., 39 (1997), pp.~644--668.

\bibitem{gottlieb2001strong}
{\sc S.~Gottlieb, C.-W. Shu, and E.~Tadmor}, {\em Strong stability-preserving
  high-order time discretization methods}, SIAM Rev., 43 (2001), pp.~89--112.

\bibitem{Guermond_etal_IDP_conv_lim_18}
{\sc J.-L. Guermond, M.~Nazarov, B.~Popov, and I.~Tomas}, {\em Second-order
  invariant domain preserving approximation of the {E}uler equations using
  convex limiting}, SIAM J. Sci. Comput., 40 (2018), pp.~A3211--A3239,
  \url{https://doi.org/10.1137/17M1149961}.

\bibitem{GUERMOND_etal_EV_04}
{\sc J.-L. Guermond, R.~Pasquetti, and B.~Popov}, {\em Entropy viscosity method
  for nonlinear conservation laws}, J. Comput. Phys., 230 (2011),
  pp.~4248--4267,
  \url{https://doi.org/https://doi.org/10.1016/j.jcp.2010.11.043}.
\newblock Special issue High Order Methods for CFD Problems.

\bibitem{Guermond_etal_IDP_conv_lim_19}
{\sc J.-L. Guermond, B.~Popov, and I.~Tomas}, {\em Invariant domain preserving
  discretization-independent schemes and convex limiting for hyperbolic
  systems}, Comput. Methods Appl. Mech. Engrg., 347 (2019), pp.~143--175,
  \url{https://doi.org/https://doi.org/10.1016/j.cma.2018.11.036}.

\bibitem{hll_83}
{\sc A.~Harten, P.~D. Lax, and B.~van Leer}, {\em On upstream differencing and
  {G}odunov-type schemes for hyperbolic conservation laws}, SIAM Rev., 25
  (1983), pp.~35--61.

\bibitem{hiltebrand_mishra_14}
{\sc A.~Hiltebrand and S.~Mishra}, {\em Entropy stable shock capturing
  space--time discontinuous {G}alerkin schemes for systems of conservation
  laws}, Numer. Math., 126 (2014), pp.~103--151.

\bibitem{Hoff_idp_85}
{\sc D.~Hoff}, {\em Invariant regions for systems of conservation laws}, Trans.
  Amer. Math. Soc., 289 (1985), pp.~591--610,
  \url{https://doi.org/https://doi.org/10.2307/2000254}.

\bibitem{hutchinson2016efficiency}
{\sc M.~Hutchinson, A.~Heinecke, H.~Pabst, G.~Henry, M.~Parsani, and D.~Keyes},
  {\em Efficiency of high order spectral element methods on petascale
  architectures}, in International Conference on High Performance Computing,
  Springer, 2016, pp.~449--466.

\bibitem{Jaffre_etal_cv_DGFE_95}
{\sc J.~JAFFRE, C.~JOHNSON, and A.~SZEPESSY}, {\em Convergence of the
  discontinuous {G}alerkin finite element method for hyperbolic conservation
  laws}, Math. Models Methods Appl. Sci., 05 (1995), pp.~367--386,
  \url{https://doi.org/10.1142/S021820259500022X}.

\bibitem{Jiang_Lu_IDP_DG_18}
{\sc Y.~Jiang and H.~Liu}, {\em Invariant-region-preserving dg methods for
  multi-dimensional hyperbolic conservation law systems, with an application to
  compressible {E}uler equations}, J. Comput. Phys., 373 (2018), pp.~385--409,
  \url{https://doi.org/https://doi.org/10.1016/j.jcp.2018.03.004}.

\bibitem{Chueh_elal_IDP_sys_77}
{\sc J.~S. K.~Chueh, C.~Conley}, {\em Positively invariant regions for systems
  of nonlinear diffusion equations}, Indiana Univ. Math. J., 26 (1977),
  pp.~373--392.

\bibitem{kopriva_metric_id_06}
{\sc D.~A. Kopriva}, {\em Metric identities and the discontinuous spectral
  element method on curvilinear meshes.}, J. Sci. Comput., 26 (2006),
  pp.~302--327,
  \url{https://doi.org/https://doi.org/10.1007/s10915-005-9070-8}.

\bibitem{KoprivaKolias_stagg_96}
{\sc D.~A. Kopriva and J.~H. Kolias}, {\em A conservative staggered-grid
  chebyshev multidomain method for compressible flows}, J Comput. Phys., 125
  (1996), pp.~244--261,
  \url{https://doi.org/https://doi.org/10.1006/jcph.1996.0091}.

\bibitem{KRIVODONOVA_etal_lim_DG_04}
{\sc L.~Krivodonova, J.~Xin, J.-F. Remacle, N.~Chevaugeon, and J.~Flaherty},
  {\em Shock detection and limiting with discontinuous galerkin methods for
  hyperbolic conservation laws}, Applied Numerical Mathematics, 48 (2004),
  pp.~323--338,
  \url{https://doi.org/https://doi.org/10.1016/j.apnum.2003.11.002}.

\bibitem{Lax_hyp_sys_cons_laws_73}
{\sc P.~D. Lax}, {\em Hyperbolic Systems of Conservation Laws and the
  Mathematical Theory of Shock Waves}, Society for Industrial and Applied
  Mathematics, 1973, \url{https://doi.org/10.1137/1.9781611970562},
  \url{https://arxiv.org/abs/https://epubs.siam.org/doi/pdf/10.1137/1.9781611970562}.

\bibitem{Liu_etal_SDM_06}
{\sc Y.~Liu, M.~Vinokur, and Z.~Wang}, {\em Spectral difference method for
  unstructured grids i: Basic formulation}, J Comput. Phys., 216 (2006),
  pp.~780--801,
  \url{https://doi.org/https://doi.org/10.1016/j.jcp.2006.01.024}.

\bibitem{PAZNER_idg_DGSEM20211}
{\sc W.~Pazner}, {\em Sparse invariant domain preserving discontinuous galerkin
  methods with subcell convex limiting}, Comput. Methods Appl. Mech. Engrg.,
  382 (2021), p.~113876,
  \url{https://doi.org/https://doi.org/10.1016/j.cma.2021.113876}.

\bibitem{persson_peraire_lim_06}
{\sc P.-O. Persson and J.~Peraire}, {\em Sub-Cell Shock Capturing for
  Discontinuous {G}alerkin Methods}, 2006,
  \url{https://doi.org/10.2514/6.2006-112}.

\bibitem{perthame_shu_96}
{\sc B.~Perthame and C.-W. Shu}, {\em On positivity preserving finite volume
  schemes for {E}uler equations}, Numer. Math., 73 (1996), pp.~119--130,
  \url{https://doi.org/10.1007/s002110050187}.

\bibitem{DG}
{\sc W.~H. Reed and T.~R. Hill}, {\em Triangular mesh methods for the neutron
  transport equation}, Los Almos Report LA-UR-73-479,  (1973).

\bibitem{renac2020entropy}
{\sc F.~Renac}, {\em Entropy stable, robust and high-order {DGSEM} for the
  compressible multicomponent {E}uler equations}, J. Comput. Phys., 445 (2021),
  p.~110584, \url{https://doi.org/10.1016/j.jcp.2021.110584}.

\bibitem{Rusanov1961}
{\sc V.~Rusanov}, {\em Calculation of interaction of non-steady shock waves
  with obstacles}, J. Comp. Math. Phys. USSR, 1 (1961), pp.~267--279.

\bibitem{serre_inv_reg_87}
{\sc D.~Serre}, {\em Domaines invariants pour les syst\`emes hyperboliques de
  lois de conservation}, J. Differ. Equ., 69 (1987), pp.~46--62,
  \url{https://doi.org/10.1016/0022-0396(87)90102-1}.

\bibitem{shu2016high}
{\sc C.-W. Shu}, {\em High order{WENO} and {DG} methods for time-dependent
  convection-dominated {PDEs}: A brief survey of several recent developments},
  J. Comput. Phys., 316 (2016), pp.~598--613.

\bibitem{sod1978survey}
{\sc G.~A. Sod}, {\em A survey of several finite difference methods for systems
  of nonlinear hyperbolic conservation laws}, Journal of computational physics,
  27 (1978), pp.~1--31.

\bibitem{thomas_lombard_GCL_79}
{\sc P.~D. Thomas and C.~K. Lombard}, {\em Geometric conservation law and its
  application to flow computations on moving grids}, AIAA J., 17 (1979),
  pp.~1030--1037, \url{https://doi.org/10.2514/3.61273}.

\bibitem{toro_book}
{\sc E.~F. Toro}, {\em {R}iemann Solvers and Numerical Methods for Fluid
  Dynamics: A Practical Introduction. Third Edition}, Springer-Verlag Berlin
  Heidelberg, 2009.

\bibitem{wang2013high}
{\sc Z.~J. Wang, K.~Fidkowski, R.~Abgrall, F.~Bassi, D.~Caraeni, A.~Cary,
  H.~Deconinck, R.~Hartmann, K.~Hillewaert, H.~T. Huynh, et~al.}, {\em
  High-order {CFD} methods: current status and perspective}, International
  Journal for Numerical Methods in Fluids, 72 (2013), pp.~811--845.

\bibitem{wintermeyer_etal_17}
{\sc N.~Wintermeyer, A.~R. Winters, G.~J. Gassner, and D.~A. Kopriva}, {\em An
  entropy stable nodal discontinuous {G}alerkin method for the two dimensional
  shallow water equations on unstructured curvilinear meshes with discontinuous
  bathymetry}, J. Comput. Phys., 340 (2017), pp.~200--242.

\bibitem{woodward_collela_84}
{\sc P.~Woodward and P.~Colella}, {\em The numerical simulation of
  two-dimensional fluid flow with strong shocks}, J. Comput. Phys., 54 (1984),
  pp.~115--173,
  \url{https://doi.org/https://doi.org/10.1016/0021-9991(84)90142-6}.

\bibitem{zalesak1979fully}
{\sc S.~T. Zalesak}, {\em Fully multidimensional flux-corrected transport
  algorithms for fluids}, J. Comput. Phys., 31 (1979), pp.~335--362.

\bibitem{zhang_shu_10a}
{\sc X.~Zhang and C.-W. Shu}, {\em On maximum-principle-satisfying high order
  schemes for scalar conservation laws}, J. Comput. Phys., 229 (2010),
  pp.~3091--3120.

\bibitem{zhang2010positivity}
{\sc X.~Zhang and C.-W. Shu}, {\em On positivity-preserving high order
  discontinuous {G}alerkin schemes for compressible {E}uler equations on
  rectangular meshes}, J. Comput. Phys., 229 (2010), pp.~8918--8934.

\bibitem{zhang2012maximum}
{\sc X.~Zhang, Y.~Xia, and C.-W. Shu}, {\em Maximum-principle-satisfying and
  positivity-preserving high order discontinuous galerkin schemes for
  conservation laws on triangular meshes}, Journal of Scientific Computing, 50
  (2012), pp.~29--62.

\end{thebibliography}
\end{document}